\newcommand{\lyxaddress}[1]{
	\par {\raggedright #1
	\vspace{1.4em}
	\noindent\par}
}
\newtheorem{theorem}{Theorem}
\newtheorem{proposition}[theorem]{Proposition}
\newtheorem{lemma}[theorem]{Lemma}
\newtheorem*{lemma*}{Lemma}
\theoremstyle{remark}
\newtheorem{remark}[theorem]{Remark}
\newtheorem*{remark*}{Remark}
\newtheorem*{remarks*}{Remarks}
\newtheorem*{example*}{Example}
\theoremstyle{definition}
\newtheorem*{definition*}{Definition}
\newtheorem*{notation*}{Notation}
\newcommand{\Spec}{\mathop\mathrm{spec}\nolimits}
\renewcommand{\Re}{\mathop\mathrm{Re}\nolimits}
\newcommand{\sn}{\mathop\mathrm{sn}\nolimits}
\newcommand{\cn}{\mathop\mathrm{cn}\nolimits}
\newcommand{\dn}{\mathop\mathrm{dn}\nolimits}
\newcommand*{\pFqskip}{8mu} % Macro for hypergeometric series
\newcommand*{\pFq}{\begingroup
        \catcode`\,\active
        \def ,{\mskip\pFqskip\relax}%
        \dopFq
}
\def\dopFq#1#2#3#4#5{%
        {}_{#1}F_{#2}\kern-.1em\biggl(\kern-.1em\genfrac..{0pt}{}{#3}{#4}\biggl|\,#5\kern-.1em\biggr)%
        \endgroup
}
\newcommand{\HG}[3]{\pFq{2}{1}{#1}{#2}{#3}}
\begin{document}
\title{New explicitly diagonalizable Hankel matrices related to the Stieltjes-Carlitz
polynomials}
\author{František~\v{S}tampach$^{1}$, Pavel~\v{S}\v{t}ov\'\i\v{c}ek$^{2}$}
\date{{}}
\maketitle

\lyxaddress{$^{1}$Department of Applied Mathematics, Faculty of Information
Technology, Czech Technical University in~Prague, Th\'akurova~9,
160~00 Praha, Czech Republic}

\lyxaddress{$^{2}$Department of Mathematics, Faculty of Nuclear Sciences and
Physical Engineering, Czech Technical University in Prague, Trojanova
13, 12000 Praha, Czech Republic}
\begin{abstract}
\noindent Four new examples of explicitly diagonalizable Hankel matrices
depending on a parameter $k\in(0,1)$ are presented. The Hankel matrices
are regarded as matrix operators on the Hilbert space $\ell^{2}(\mathbb{N}_{0})$
and the solution of the spectral problem is based on an application
of the commutator method. Each of the Hankel matrices commutes with
a Jacobi matrix which is related to a particular family of the Stieltjes-Carlitz
polynomials. More examples of explicitly diagonalizable structured
matrix operators are obtained when taking into account also weighted
Hankel matrices.
\end{abstract}
\vskip\baselineskip\emph{Keywords}: Hankel matrix, Jacobi matrix,
commutator method, the Stieltjes-Carlitz polynomials\\
\emph{MSC codes}: 47B36; 47B35; 42C05

\section{Introduction}

To the authors' best knowledge, the generalized Hilbert matrix
\[
H(\theta)_{m,n}:=1/(m+n+\theta),
\]
with $m,n\in\mathbb{N}_{0}$ and $\theta\in\mathbb{R}\setminus(-\mathbb{N}_{0})$
being a parameter, is the only known example of a Hankel matrix which
is explicitly diagonalizable if regarded as a self-adjoint matrix
operator on the Hilbert space $\ell^{2}(\mathbb{N}_{0})$. A solution
of the spectral problem for the Hilbert matrix $H(0)$ was known already
to Magnus \cite{Magnus}. Later on Rosenblum described in \cite{Rosenblum}
an explicit diagonalization of the generalized Hilbert matrix $H(\theta)$.
Let us recall that $H(\theta)$ always represents a bounded operator
on $\ell^{2}(\mathbb{N}_{0})$, its singular continuous spectrum is
always empty, the absolutely continuous spectrum always fills the
interval $[\,0,\pi\,]$, and for $\theta<1/2$ the point spectrum
of $H(\theta)$ is non-empty with the only possible eigenvalues $\pm\pi/\sin(\pi\theta)$
whose multiplicities are finite and depend on $\theta$.

Rosenblum's approach to the solution of the spectral problem is quite
universal and is based on the powerful commutator method. He showed
the matrix operator $H(\theta)$ to be unitarily equivalent to an
integral operator on $L^{2}(\mathbb{R}_{+},\text{d}x)$ and found
a Sturm-Liouville operator on $\mathbb{R}_{+}$ commuting with that
integral operator. Moreover, the Sturm-Liouville operator turned out
to be explicitly diagonalizable with a simple spectrum. The desired
result then followed rather straightforwardly.

The commutator method can be effectively used in various similar situations.
For instance, a systematic application of the method to Hankel integral
operators can be found in \cite{Yafaev}. For our purposes it is substantial
to note that it is possible to avoid an intermediate step in Rosenblum's
solution when $H(\theta)$ is transformed to an integral operator.
As discussed in detail in \cite{kalvoda-stovicek_lma16}, there exists
a Jacobi matrix $J(\theta)$ commuting with $H(\theta)$. Moreover,
$J(\theta)$ has quite nice properties since the associated orthogonal
polynomial sequence is formed by the dual continuous Hahn polynomials
and as such it is included in the Askey classification scheme. The
corresponding normalized measure of orthogonality is unique (the determinate
case) and is known explicitly \cite{KoekoekLeskySwarttouw}. For $J(\theta)$
this means that it is explicitly diagonalizable if regarded as a matrix
operator on $\ell^{2}(\mathbb{N}_{0})$. Moreover, this is a general
feature that the spectrum of a Jacobi matrix operator is simple. Diagonalization
of $H(\theta)$ is then a direct corollary provided one is able to
evaluate the eigenvalues of $H(\theta)$. It turns out that to this
end an additional piece of information is needed, namely a generating
function for the orthogonal polynomial sequence in question written
in an appropriate form.

The family of explicitly diagonalizable structured matrix operators
can be substantially extended if one considers not only Hankel matrices
but also weighted Hankel matrices. This possibility was systematically
explored in \cite{kalvoda-stovicek_lma16} with the restriction that
the commuting Jacobi matrix $J(a,b,c)$ is still related to the dual
continuous Hahn polynomials which depend on three parameters $a$,
$b$, $c$. In a recent paper \cite{StampachStovicek} other orthogonal
polynomial sequences from the Askey scheme are taken into account
and several new examples of explicitly diagonalizable weighted Hankel
matrices are presented.

In the current paper we still stick to this approach. In a recent study~\cite{StampachStovicek_askey-hankel}, it is shown that the generalized Hankel matrix is the only infinite-rank Hankel matrix which, if regarded as an operator on~$\ell^{2}(\mathbb{N}_{0})$, is diagonalizable by application of the commutator method to Jacobi matrices associated with polynomial families from the Askey scheme. Therefore, when attempting
to find new diagonalizable Hankel matrices, we have to go beyond the Askey
scheme. In this arcticle, we focus on the Stieltjes-Carlitz
polynomials whose basic properties were also well studied. A brief
summary is given below in Subsection~\ref{subsec:Stieltjes-Carlitz-polynomials}.
This means in particular that the considered commuting Jacobi matrices
have rather special form depending on five parameters. Of course,
this implies, too, a restriction on the class of Hankel or weighted
Hankel matrices we wish to explore. Notably, four explicitly diagonalizable
Hankel matrices have been discovered within this class, and this is
the main result presented in the paper. This list is then completed
by additional examples of weighted Hankel matrices with the same property.
In contradiction to the Hilbert matrix, all studied matrix operators
belong to the trace class and therefore they have a pure point spectrum.

Let us now make the settings of the present paper more precise. We
seek Hankel matrices or, more generally, weighted Hankel matrices
commuting with the Jacobi matrix
\begin{equation}
J=\begin{pmatrix}\beta_{0} & \alpha_{0}\\
\alpha_{0} & \beta_{1} & \alpha_{1}\\
 & \alpha_{1} & \beta_{2} & \alpha_{2}\\
 &  & \ddots & \ddots & \ddots
\end{pmatrix}\label{eq:jacobi_mat}
\end{equation}
whose entries are of the form
\begin{equation}
\alpha_{n}:=-\sqrt{(n+1)(n+a+1)(n+b+1)(n+c+1)}\,,\ \beta_{n}:=(k+k^{-1})n\,(n+\sigma).\label{eq:alpha-beta-gen}
\end{equation}
Our choice of the parameters guarantees that $J$ is a non-decomposable
Hermitian matrix, namely $k\in(0,1)$, $\sigma\in\mathbb{R}$, and
$a,b,c>-1$. Later on, however, these parameters will be further specialized
in order to obtain Jacobi operators with an explicitly solvable spectral
problem.

Let us also note that in all cases studied in the sequel the Jacobi
matrix (\ref{eq:jacobi_mat}), (\ref{eq:alpha-beta-gen}) represents
a unique self-adjoint operator on $\ell^{2}(\mathbb{N}_{0})$. This
is why we can afford to be less scrupulous in the notation when we
are using the same symbol for a Jacobi matrix and the corresponding
operator.

The paper is organized as follows. Section~\ref{sec:Preliminaries}
summarizes some preliminary information which is then needed in the
remainder of the paper. An important role in the entire paper is played
by elliptic functions and integrals and this is the subject of Subsection~\ref{subsec:jacobi_elliptic}.
Subsection~\ref{subsec:Stieltjes-Carlitz-polynomials} is devoted
to the Stieltjes-Carlitz polynomials. Section~\ref{sec:Auxiliary-results}
contains some technical auxiliary results which are then used in the
proofs of the presented theorems. In Section~\ref{sec:A-three-term-recurrence},
a three-term recurrence equation is studied with coefficients depending
linearly on the index. The purpose of this study is the fact that
the commutation equation between a Hankel and a Jacobi matrix in our
case finally leads to such a three-term recurrence. This type of equation
is rather general, however, and, as we suppose, it may be encountered
also in other problems. The main goal of Section~\ref{sec:General-commuting-Hankel}
is to determine which Jacobi matrices of the form (\ref{eq:jacobi_mat}),
(\ref{eq:alpha-beta-gen}) admit a nontrivial commuting Hankel matrix.
Section~\ref{sec:The-main-theorem} contains the main result of the
paper, i.e. some examples of explicitly diagonalizable Hankel matrices.
In addition, the list of explicitly diagonalizable structured matrix
operators is extended in Sections~\ref{sec:Families-1-2} and \ref{sec:Some-more-weighted}
by considering also weighted Hankel matrices.

\section{Preliminaries\label{sec:Preliminaries}}

\subsection{Jacobian elliptic functions \label{subsec:jacobi_elliptic}}

We start from recalling the definition of the complete elliptic integrals
of the first kind,
\[
K=K(k):=\frac{\pi}{2}\,\HG{\frac{1}{2},\frac{1}{2}}{1}{k^{2}}=\int_{0}^{\pi/2}\frac{\text{d}\theta}{\sqrt{1-k^{2}\sin^{2}\theta}},\ \,K'(k):=K\big(\sqrt{1-k^{2}}\big),
\]
and the elliptic nome
\[
q=q(k):=\exp\!\big(-\pi K'(k)/K(k)\big),
\]
where $k\in(0,1)$; see, for example,~\cite[Chp.~19]{dlmf}. Then
$q\in(0,1)$.

We shall also need the familiar integral representation of the Gauss
hypergeometric function~\cite[Eq.~15.6.1]{dlmf}
\begin{equation}
\HG{a,b}{c}{z}=\frac{\Gamma(c)}{\Gamma(b)\Gamma(c-b)}\int_{0}^{1}\frac{t^{b-1}(1-t)^{c-b-1}}{(1-tz)^{a}}\,\text{d}t,\label{eq:gauss_hyp_int_repre}
\end{equation}
where $\Re c>\Re b>0$.

A~remark to the notation. In displayed formulas we shall denote the
hypergeometric functions as in equation (\ref{eq:gauss_hyp_int_repre}).
In in-line formulas, however, we prefer the equivalent expression
$\,_{2}F_{1}(a,b;c;z)$.

Further we recall several selected properties of the Jacobian elliptic
functions $\sn(z)=\sn(z,k)$, $\cn(z)=\cn(z,k)$, and $\dn(z)=\dn(z,k)$
that will be needed in the sequel. The reader is referred, for example,
to~\cite{lawden_89} for the theory of the elliptic functions and
to~\cite[Chp.~22]{dlmf} for an easily accessible review of their
fundamental properties.

First, the squares of the Jacobian elliptic functions are mutually
related as follows~\cite[Eq.~22.6.1]{dlmf}
\begin{equation}
\sn^{2}(z)+\cn^{2}(z)=k^{2}\sn^{2}(z)+\dn^{2}(z)=1.\label{eq:sn_cn_dn_square_id}
\end{equation}
Second, we will need the formulas for the first derivatives~\cite[Table~22.13.1]{dlmf}
\begin{equation}
\frac{\text{d}\sn(z)}{\text{d}z}=\cn(z)\dn(z),\ \frac{\text{d}\cn(z)}{\text{d}z}=-\sn(z)\dn(z),\ \frac{\text{d}\dn(z)}{\text{d}z}=-k^{2}\sn(z)\cn(z).\label{eq:deriv_sn_cn_dn}
\end{equation}
Third, we have the special values~\cite[Table~22.5.1]{dlmf}
\begin{equation}
\sn(0)=0,\ \,\cn(0)=1,\ \,\dn(0)=1.\label{eq:sn_cn_dn_0}
\end{equation}
and
\begin{equation}
\sn(K)=1,\ \,\cn(K)=0,\ \,\dn(K)=\sqrt{1-k^{2}}.\label{eq:sn_cn_dn_K}
\end{equation}

Finally, recall the Fourier series~\cite[Eqs.~22.11.1-3]{dlmf}
\begin{equation}
\sn\!\left(\frac{2Kv}{\pi}\right)=\frac{2\pi}{kK}\sum_{n=0}^{\infty}\frac{q^{n+1/2}}{1-q^{2n+1}}\,\sin\big((2n+1)v\big),\label{eq:four_sn}
\end{equation}
\begin{equation}
\cn\!\left(\frac{2Kv}{\pi}\right)=\frac{2\pi}{kK}\sum_{n=0}^{\infty}\frac{q^{n+1/2}}{1+q^{2n+1}}\,\cos\big((2n+1)v\big),\label{eq:four_cn}
\end{equation}
\begin{equation}
\dn\!\left(\frac{Kv}{\pi}\right)=\frac{\pi}{2K}+\frac{2\pi}{K}\sum_{n=1}^{\infty}\frac{q^{n}}{1+q^{2n}}\,\cos(nv),\label{eq:four_dn}
\end{equation}
and also~\cite{kiper_mc84}
\begin{equation}
\sn^{2}\!\left(\frac{Kv}{\pi}\right)=\frac{K-E(k)}{k^{2}K}-\frac{2\pi^{2}}{k^{2}K^{2}}\sum_{n=1}^{\infty}\frac{nq^{n}}{1-q^{2n}}\,\cos(nv),\label{eq:four_sn_squared}
\end{equation}
\begin{equation}
\sn^{3}\!\left(\frac{2Kv}{\pi}\right)=\frac{\pi}{k^{3}K}\sum_{n=0}^{\infty}\frac{q^{n+1/2}}{1-q^{2n+1}}\!\left(1+k^{2}-\frac{(2n+1)^{2}\pi^{2}}{4K^{2}}\right)\!\sin\big((2n+1)v\big),\label{eq:four_sn_cubed}
\end{equation}
where $E$ is the complete elliptic integral of the second kind, see~\cite[Chp.~19]{dlmf}.
The above Fourier expansions hold true for all $v\in\mathbb{R}$ and
$k\in(0,1)$.

\subsection{The Stieltjes-Carlitz polynomials\label{subsec:Stieltjes-Carlitz-polynomials}}

In~\cite{carlitz_dm60}, Carlitz investigated four families of orthogonal
polynomials obtained from certain formulas for continued fractions
of Laplace transform of the Jacobian elliptic functions studied earlier
by Stieltjes and Rogers. Two of the families are symmetric orthogonal
polynomials. Consequently, each of these two families gives rise to
other two families of orthogonal polynomials, see~\cite[Chp.~I, Sec.~9 and Chp.~VI, Sec.~9]{chihara_78}.
In total, there are six families of orthogonal polynomials intimately
related to the Jacobian elliptic functions. For the sake of definiteness,
we call them Family~\#1~-~6 because it seems that there are no
commonly used names for these families in the literature.

Below we list their basic properties that will be needed further.
Namely, we recall the three-term recurrences, orthogonality relations,
and generating functions. These polynomials are defined in their monic
form, i.e., they fulfill a three-term recurrence of the form
\[
P_{n+1}(x)=(x-\beta_{n})P_{n}(x)-\alpha_{n-1}^{\,2}P_{n-1}(x),\,\ n\in\mathbb{N}_{0},
\]
($\alpha_{-1}$ is arbitrary) with the standard initial conditions
$P_{-1}(x)=0$ and $P_{0}(x)=1$. Moreover, all families depend on
a parameter $k$. It is always assumed that $k\in(0,1)$ in which
case every family has a unique measure of orthogonality. In other
words, the respective Hamburger moment problems are all determinate,
see ~\cite[Chp.~VI, Sec.~9]{chihara_78} or~\cite[Sec.~21.9]{ismail09}
and references therein. Consequently, the Jacobi matrices corresponding
to the families of orthogonal polynomials listed below give rise to
unique self-adjoint Jacobi operators; see~\cite{akhiezer90} for
the general theory.

\textbf{Family~\#1}. The three-term recurrence:
\begin{equation}
f_{n+1}(x)=\big(x+(k^{2}+1)(2n+1)^{2}\big)f_{n}(x)-k^{2}(2n-1)(2n)^{2}(2n+1)f_{n-1}(x),\ \,n\geq0.\label{eq:recur_carlitz1}
\end{equation}
The orthogonality relation:
\begin{equation}
\int_{0}^{\infty}f_{n}(-x)f_{m}(-x)\,\text{d}\mu(x)=k^{2n}(2n)!(2n+1)!\delta_{m,n},\ \,m,n\geq0.\label{eq:og_rel_carlitz1}
\end{equation}
where
\begin{equation}
\mu=\frac{\pi^{2}}{K^{2}k}\sum_{m=0}^{\infty}\frac{(2m+1)q^{m+1/2}}{1-q^{2m+1}}\,\delta_{\lambda_{m}}\quad\mbox{ and }\quad\lambda_{m}=\frac{\pi^{2}(2m+1)^{2}}{4K^{2}}.\label{eq:mu_carlitz1}
\end{equation}
Here and below, $\delta_{x}$ denotes the unit-mass Dirac delta measure
supported on the one-point set $\{x\}$.\\
 The generating function:
\begin{equation}
\sum_{n=0}^{\infty}\frac{f_{n}(x)}{(2n+1)!}\sn^{2n+1}(u)=\frac{\sinh(\sqrt{x}u)}{\sqrt{x}}.\label{eq:gener_func_carlitz1}
\end{equation}

\textbf{Family~\#2}. The three-term recurrence:
\begin{equation}
g_{n+1}(x)=\big(x+(k^{2}+1)(2n+2)^{2}\big)g_{n}(x)-k^{2}(2n)(2n+1)^{2}(2n+2)g_{n-1}(x),\ \,n\geq0.\label{eq:recur_carlitz2}
\end{equation}
The orthogonality relation:
\begin{equation}
\int_{0}^{\infty}g_{n}(-x)g_{m}(-x)\,\text{d}\mu(x)=\frac{k^{2n}(2n+1)!(2n+2)!}{2}\,\delta_{m,n},\ \,m,n\geq0,\label{eq:og_rel_carlitz2}
\end{equation}
where
\begin{equation}
\mu=\frac{\pi^{4}}{K^{4}k^{2}}\sum_{m=1}^{\infty}\frac{m^{3}q^{m}}{1-q^{2m}}\,\delta_{\lambda_{m}}\quad\mbox{ and }\quad\lambda_{m}=\frac{\pi^{2}m^{2}}{K^{2}}.\label{eq:mu_carlitz2}
\end{equation}
The generating function:
\begin{equation}
\sum_{n=0}^{\infty}\frac{g_{n}(x)}{(2n+1)!}\sn^{2n+1}(u)=\frac{\sinh(\sqrt{x}u)}{\sqrt{x}\cn(u)\dn(u)}.\label{eq:gener_func_carlitz2}
\end{equation}

\textbf{Families~\#3 and~\#4}. The three-term recurrences:
\begin{equation}
p_{n+1}(x)=\big(x-k^{2}(2n)^{2}-(2n+1)^{2}\big)p_{n}(x)-k^{2}(2n)^{2}(2n-1)^{2}p_{n-1}(x),\ \,n\geq0,\label{eq:recur_carlitz3}
\end{equation}
and
\begin{equation}
q_{n+1}(x)=\big(x-(2n+1)^{2}-k^{2}(2n+2)^{2}\big)q_{n}(x)-k^{2}(2n+1)^{2}(2n)^{2}q_{n-1}(x),\ \,n\geq0.\label{eq:recur_carlitz4}
\end{equation}
The orthogonality relations:
\begin{equation}
\int_{0}^{\infty}p_{n}(x)p_{m}(x)\,\text{d}\mu(x)=k^{2n}((2n)!)^{2}\delta_{m,n},\ \,m,n\geq0,\label{eq:og_rel_carlitz3}
\end{equation}
and
\begin{equation}
\int_{0}^{\infty}q_{n}(x)q_{m}(x)x\,\text{d}\mu(x)=k^{2n}((2n+1)!)^{2}\delta_{m,n},\ \,m,n\geq0,\label{eq:og_rel_carlitz4}
\end{equation}
where
\begin{equation}
\mu=\frac{2\pi}{Kk}\sum_{m=0}^{\infty}\frac{q^{m+1/2}}{1+q^{2m+1}}\,\delta_{\lambda_{m}}\quad\mbox{ and }\quad\lambda_{m}=\frac{\pi^{2}(2m+1)^{2}}{4K^{2}}.\label{eq:mu_carlitz34}
\end{equation}
The generating functions:
\begin{equation}
\sum_{n=0}^{\infty}\frac{(-1)^{n}p_{n}(x)}{(2n)!}\sn^{2n}(u)=\frac{\cos(\sqrt{x}u)}{\cn(u)}\label{eq:gener_func_carlitz3}
\end{equation}
and
\begin{equation}
\sum_{n=0}^{\infty}\frac{(-1)^{n}q_{n}(x)}{(2n+1)!}\sn^{2n+1}(u)=\frac{\sin(\sqrt{x}u)}{\sqrt{x}\dn(u)}.\label{eq:gener_func_carlitz4}
\end{equation}

\textbf{Families~\#5 and~\#6}. The three-term recurrences:
\begin{equation}
r_{n+1}(x)=\big(x-(2n)^{2}-k^{2}(2n+1)^{2}\big)r_{n}(x)-k^{2}(2n)^{2}(2n-1)^{2}r_{n-1}(x),\ \,n\geq0,\label{eq:recur_carlitz5}
\end{equation}
and
\begin{equation}
s_{n+1}(x)=\big(x-k^{2}(2n+1)^{2}-(2n+2)^{2}\big)s_{n}(x)-k^{2}(2n+1)^{2}(2n)^{2}s_{n-1}(x),\ \,n\geq0.\label{eq:recur_carlitz6}
\end{equation}
The orthogonality relations:
\begin{equation}
\int_{0}^{\infty}r_{n}(x)r_{m}(x)\,\text{d}\mu(x)=k^{2n}((2n)!)^{2}\delta_{m,n},\ \,m,n\geq0,\label{eq:og_rel_carlitz5}
\end{equation}
and
\begin{equation}
\int_{0}^{\infty}s_{n}(x)s_{m}(x)x\,\text{d}\mu(x)=k^{2n+2}((2n+1)!)^{2}\delta_{m,n},\ \,m,n\geq0,\label{eq:og_rel_carlitz6}
\end{equation}
where
\begin{equation}
\mu=\frac{2\pi}{K}\sum_{m=0}^{\infty}\frac{q^{m}}{1+q^{2m}}\,\delta_{\lambda_{m}}\quad\mbox{ and }\quad\lambda_{m}=\frac{\pi^{2}m^{2}}{K^{2}}.\label{eq:mu_carlitz56}
\end{equation}
The generating functions:
\begin{equation}
\sum_{n=0}^{\infty}\frac{(-1)^{n}r_{n}(x)}{(2n)!}\,\sn^{2n}(u)=\frac{\cos(\sqrt{x}u)}{\dn(u)}\label{eq:gener_func_carlitz5}
\end{equation}
and
\begin{equation}
\sum_{n=0}^{\infty}\frac{(-1)^{n}s_{n}(x)}{(2n+1)!}\sn^{2n+1}(u)=\frac{\sin(\sqrt{x}u)}{\sqrt{x}\cn(u)}.\label{eq:gener_func_carlitz6}
\end{equation}

\section{Several auxiliary results\label{sec:Auxiliary-results}}

The asymptotic expansion to the leading order of the Stieltjes-Carlitz
polynomials can be derived in a comparatively straightforward way
and this fact was already exploited by some authors \cite{ismail-etal_jat01}.
For the sake of completeness and since the asymptotic formulas will
be of some importance in the sequel, the result is presented here,
too.

\begin{proposition}\label{thm:asympt-SC-poly} The leading terms
in the asymptotic expansion of the Stieltjes-Carlitz polynomials $p_{n}(x)$,
$q_{n}(x)$, $r_{n}(x)$ and $s_{n}(x)$, defined in (\ref{eq:recur_carlitz3}),
(\ref{eq:recur_carlitz4}), (\ref{eq:recur_carlitz5}) and (\ref{eq:recur_carlitz6}),
respectively, are as follows
\begin{eqnarray*}
\frac{p_{n}(x)}{(2n)!} & = & \frac{(-1)^{n}}{\sqrt{\pi n}}\,\cos\!\big(\sqrt{x}K\big)+o\!\left(\frac{1}{n}\right)\!,\\
\frac{q_{n}(x)}{(2n+1)!} & = & \frac{(-1)^{n}}{2(1-k^{2})\sqrt{\pi}\,n^{3/2}}\,\cos\!\big(\sqrt{x}K\big)+o\!\left(\frac{1}{n^{2}}\right)\!,\\
\frac{r_{n}(x)}{(2n)!} & = & \frac{(-1)^{n+1}}{2(1-k^{2})\sqrt{\pi}\,n^{3/2}}\,\sqrt{x}\sin\!\big(\sqrt{x}K\big)+o\!\left(\frac{1}{n^{2}}\right)\!,\\
\frac{s_{n}(x)}{(2n+1)!} & = & \frac{(-1)^{n}}{\sqrt{\pi n}}\,\frac{\sin\!\big(\sqrt{x}K\big)}{\sqrt{x}}+o\!\left(\frac{1}{n}\right)\!,
\end{eqnarray*}
 as $n\to\infty$. Here $x$ is an arbitrary fixed complex number.
\end{proposition}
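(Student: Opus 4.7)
My plan is to extract each asymptotic formula from the corresponding generating function~(\ref{eq:gener_func_carlitz3})--(\ref{eq:gener_func_carlitz6}) by a Darboux-type singularity analysis in the variable $w:=\sn^{2}(u,k)$. After dividing once by $\sn(u)$ in the odd cases ($q_n$ and $s_n$), each identity becomes an ordinary power series $\sum_{n\geq 0}c_{n}(x)\,w^{n}$, and the leading large-$n$ behaviour of $c_{n}(x)$ is governed by the dominant singularity of the sum in the complex $w$-plane.

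\textbf{Dominant singularity at $w=1$.} The map $w\mapsto u(w)$ defined implicitly by $\sn^{2}(u,k)=w$ and $u(0)=0$ has algebraic branch points only at $w=1$ and $w=1/k^{2}$; since $k\in(0,1)$, the nearer one is $w=1$, corresponding to $u=K$. From~(\ref{eq:sn_cn_dn_K}) and~(\ref{eq:deriv_sn_cn_dn}) one has $\sn'(K)=\cn(K)\dn(K)=0$ and $\sn''(K)=-(1-k^{2})$, and a short Taylor argument then gives
\[
u-K=-\sqrt{\frac{1-w}{1-k^{2}}}\,\bigl(1+O(1-w)\bigr),\qquad\cn(u)=\sqrt{1-w}\,\bigl(1+O(1-w)\bigr),
\]
as $w\to 1$. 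All further singularities of the right-hand sides (zeros of $\dn$ at complex $u$) are pulled back to $|w|\geq 1/k^{2}>1$, so they contribute only exponentially small $O(k^{2n})$ corrections that are absorbed into the stated error terms.

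\textbf{Extraction of the coefficients.} For the $p_n$ and $s_n$ families, the generating functions~(\ref{eq:gener_func_carlitz3}) and~(\ref{eq:gener_func_carlitz6}) have a simple pole at $u=K$ coming from the factor $1/\cn(u)$. Composing with the local expansion above yields a singularity $C(x)(1-w)^{-1/2}$ at $w=1$ with $C(x)=\cos(\sqrt{x}K)$ in the first case and $C(x)=\sin(\sqrt{x}K)/\sqrt{x}$ in the second. For the $q_n$ and $r_n$ families, the right-hand sides of~(\ref{eq:gener_func_carlitz4}) and~(\ref{eq:gener_func_carlitz5}) are smooth at $u=K$, so the singular structure at $w=1$ reduces to the $\sqrt{1-w}$ term produced by the branch of $u(w)$. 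A helpful simplification is $(\sn\dn)'(K)=\cn(K)\bigl(\dn^{2}(K)-k^{2}\sn^{2}(K)\bigr)=0$, which collapses the derivative of the generating function at $u=K$ to a single trigonometric term: $\cos(\sqrt{x}K)/\sqrt{1-k^{2}}$ for $q_n$ and $-\sqrt{x}\sin(\sqrt{x}K)/\sqrt{1-k^{2}}$ for $r_n$. Combining these data with the standard transfer identities
\[
[w^{n}](1-w)^{-1/2}\sim\frac{1}{\sqrt{\pi n}},\qquad [w^{n}](1-w)^{1/2}\sim-\frac{1}{2\sqrt{\pi}\,n^{3/2}},
\]
yields the four asymptotic relations with exactly the stated constants, after restoring the factor $(-1)^{n}$ from the sign convention in the generating functions.

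\textbf{Anticipated obstacle.} The only genuine technical task is to upgrade these local expansions to a rigorous Darboux (or Flajolet--Odlyzko transfer) statement, which demands checking that each generating function admits an analytic continuation to an indented neighbourhood of $w=1$ whose regular part has radius of convergence strictly greater than $1$. This is straightforward here because the right-hand sides are quotients of entire functions of $u$ and the competing singularities live at zeros of $\dn$, whose $\sn^{2}$-images all have modulus at least $1/k^{2}>1$. Once this is in place, the error terms $o(1/n)$ and $o(1/n^{2})$ follow from the standard remainder form of the transfer theorem; in fact the next singular term $(1-w)^{1/2}$ sharpens them to $O(n^{-3/2})$ and $O(n^{-5/2})$, respectively.
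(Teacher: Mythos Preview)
Your proposal is correct and follows essentially the same approach as the paper: Darboux-type singularity analysis of the generating functions \eqref{eq:gener_func_carlitz3}--\eqref{eq:gener_func_carlitz6} in the variable $\xi=w=\sn^{2}(u)$, with the dominant singularity at $w=1$. The one minor variation is in the treatment of $q_n$ and $r_n$: the paper first differentiates the generating functions with respect to $u$ so as to expose an explicit $1/\sqrt{1-\xi}$ singularity (yielding, e.g., $\sum (-1)^n q_n(x)\xi^n/(2n)!=\cos(\sqrt{x}\sn^{-1}\sqrt{\xi})/(\sqrt{1-\xi}(1-k^2\xi))+\ldots$) and then compares with $\cos(\sqrt{x}K)/((1-k^2)\sqrt{1-\xi})$, whereas you read off the $(1-w)^{1/2}$ contribution directly via the transfer rule $[w^n](1-w)^{1/2}\sim -1/(2\sqrt{\pi}\,n^{3/2})$; both routes are standard and lead to the same constants.
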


\begin{proof} The generating function~\eqref{eq:gener_func_carlitz3}
can be rewritten as
\[
\sum_{n=0}^{\infty}\frac{(-1)^{n}p_{n}(x)}{(2n)!}\,\xi^{n}=\frac{\cos\!\left(\sqrt{x}\sn^{-1}(\sqrt{\xi})\right)}{\sqrt{1-\xi}}=:g(\xi).
\]
The singularity of $g(\xi)$ located most closely to the origin occurs
at $\xi=1$. One can apply the Darboux method with the comparison
function
\[
g_{c}(\xi):=\frac{\cos(\sqrt{x}K)}{\sqrt{1-\xi}}\,,
\]
see~\cite[Sec.~8.9]{olver_97} and especially the refinements in
$\S\,8.9.3$. In order to get the asymptotic formula for $p_{n}(x)$
it suffices to observe that the restriction of $g(\xi)-g_{c}(\xi)$
to the unite circle $\xi=e^{i\theta}$, $\theta\in[\,0,2\pi\,]$,
is continuous. Moreover, the restriction of the derivative $g'(\xi)-g_{c}'(\xi)$
to the unite circle is continuous, too, except singularities at $\theta=0$
and $\theta=2\pi$ which are integrable, however. More precisely,
the singularity at $\theta=0$ is of order $\theta^{-1/2}$, and similarly
for $\theta=2\pi$.

As is the main idea of the Darboux method, these feature make it possible
to effectively compare the coefficients in the power series expansions
of $g(\xi)$ and $g_{c}(\xi)$. We skip further details of this standard
approach.

Very analogously one can proceed in the case of polynomials $q_{n}(x)$,
$r_{n}(x)$ and $s_{n}(x)$. Omitting additional details we confine
ourselves to pointing out equations to which the Darboux method can
be applied. By differentiating (\ref{eq:gener_func_carlitz4}) and
using the rules (\ref{eq:sn_cn_dn_square_id}) and (\ref{eq:deriv_sn_cn_dn})
we obtain
\[
\sum_{n=0}^{\infty}\frac{(-1)^{n}q_{n}(x)}{(2n)!}\,\xi^{n}=\frac{\cos\!\left(\sqrt{x}\sn^{-1}(\sqrt{\xi})\right)}{\sqrt{1-\xi}\,(1-k^{2}\xi)}+\frac{k^{2}\sin\!\left(\sqrt{x}\sn^{-1}(\sqrt{\xi})\right)\!\sqrt{\xi}}{\sqrt{x}\,(1-k^{2}\xi)^{3/2}}\,.
\]
Similarly one can treat equation (\ref{eq:gener_func_carlitz5}) to
get
\[
\sum_{n=1}^{\infty}\frac{(-1)^{n}r_{n}(x)}{(2n-1)!}\,\xi^{n}=-\frac{\sqrt{x\xi}\sin\!\left(\sqrt{x}\sn^{-1}(\sqrt{\xi})\right)}{\sqrt{1-\xi}\,(1-k^{2}\xi)}+\frac{k^{2}\cos\!\left(\sqrt{x}\sn^{-1}(\sqrt{\xi})\right)\!\xi}{(1-k^{2}\xi)^{3/2}}\,.
\]
Finally, equation (\ref{eq:gener_func_carlitz6}) can be quite straightforwardly
rewritten as
\[
\sum_{n=0}^{\infty}\frac{(-1)^{n}s_{n}(x)}{(2n+1)!}\,\xi^{n}=\frac{\sin\!\left(\sqrt{x}\sn^{-1}(\sqrt{\xi})\right)}{\sqrt{x\xi}\sqrt{1-\xi}}\,.
\]
The desired asymptotic formulas follow. \end{proof}

\begin{lemma}\label{thm:asympt-int-sn-cn-dn} We have
\begin{eqnarray*}
\int_{0}^{K}\sn^{2n}(u)\cn^{2}(u)\,\text{d}u & = & \sqrt{\frac{\pi}{1-k^{2}}}\,\frac{1}{4n^{3/2}}\,\big(1+o(1)\big),\\
\noalign{\smallskip}\int_{0}^{K}\sn^{2n}(u)\dn^{2}(u)\,\text{d}u & = & \frac{\sqrt{(1-k^{2})\pi}}{2n^{1/2}}\,\big(1+o(1)\big),\ \,\text{as}\ n\to\infty.
\end{eqnarray*}
 \end{lemma}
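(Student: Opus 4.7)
The plan is to reduce both integrals to standard Euler-type integrals by the change of variable $t=\sn^{2}(u)$, and then read off the asymptotics from the integral representation~\eqref{eq:gauss_hyp_int_repre} of ${}_2F_1$. On $[0,K]$ the function $\sn(u)$ is monotonically increasing from $0$ to $1$, and with $\sn\cn\dn\geq0$ one has $\text{d}t=2\sqrt{t(1-t)(1-k^{2}t)}\,\text{d}u$. Using $\cn^{2}(u)=1-t$ and $\dn^{2}(u)=1-k^{2}t$ the two integrals become
\[
\int_{0}^{K}\!\sn^{2n}(u)\cn^{2}(u)\,\text{d}u=\frac{1}{2}\int_{0}^{1}\!\frac{t^{n-1/2}(1-t)^{1/2}}{\sqrt{1-k^{2}t}}\,\text{d}t,\quad\int_{0}^{K}\!\sn^{2n}(u)\dn^{2}(u)\,\text{d}u=\frac{1}{2}\int_{0}^{1}\!\frac{t^{n-1/2}\sqrt{1-k^{2}t}}{\sqrt{1-t}}\,\text{d}t.
\]

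Next I would apply~\eqref{eq:gauss_hyp_int_repre}, identifying the parameters $(a,b,c)=(1/2,n+1/2,n+2)$ in the first case and $(a,b,c)=(-1/2,n+1/2,n+1)$ in the second. Both choices satisfy $\Re c>\Re b>0$, so the formula gives
\[
\int_{0}^{K}\!\sn^{2n}(u)\cn^{2}(u)\,\text{d}u=\frac{\Gamma(n+1/2)\Gamma(3/2)}{2\,\Gamma(n+2)}\,\HG{\frac{1}{2},\,n+\frac{1}{2}}{n+2}{k^{2}},
\]
\[
\int_{0}^{K}\!\sn^{2n}(u)\dn^{2}(u)\,\text{d}u=\frac{\Gamma(n+1/2)\Gamma(1/2)}{2\,\Gamma(n+1)}\,\HG{-\frac{1}{2},\,n+\frac{1}{2}}{n+1}{k^{2}}.
\]

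The leading asymptotics then follow from two observations. First, Stirling gives $\Gamma(n+1/2)/\Gamma(n+2)\sim n^{-3/2}$ and $\Gamma(n+1/2)/\Gamma(n+1)\sim n^{-1/2}$. Second, for fixed $a$ and $z\in(-1,1)$ one has ${}_2F_{1}(a,n+b;n+c;z)\to(1-z)^{-a}$ as $n\to\infty$, since termwise $(n+b)_{j}/(n+c)_{j}\to1$ and the ratio is bounded uniformly in $n$ for each $j$, so that dominated convergence applies to the series. This produces the limits $(1-k^{2})^{-1/2}$ and $(1-k^{2})^{1/2}$ respectively. Assembling the three factors together with $\Gamma(1/2)=\sqrt{\pi}$ and $\Gamma(3/2)=\sqrt{\pi}/2$ reproduces the claimed formulas exactly.

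The only mildly delicate point is the justification of the $o(1)$ error in the hypergeometric limit, but this is entirely routine: each term of the series is dominated by $|(a)_{j}|\,k^{2j}/j!$, which is summable, so convergence of the sum to $(1-k^{2})^{-a}$ is immediate from dominated convergence. Alternatively one could bypass the hypergeometric machinery altogether and argue by Laplace's method around $u=K$, using the reflection identities $\sn(K-v)=\cn(v)/\dn(v)$, $\cn(K-v)=\sqrt{1-k^{2}}\,\sn(v)/\dn(v)$, $\dn(K-v)=\sqrt{1-k^{2}}/\dn(v)$ together with the expansion $\sn^{2n}(K-v)=e^{-n(1-k^{2})v^{2}}(1+O(nv^{4}))$; both Gaussian integrals $\int_{0}^{\infty}e^{-n(1-k^{2})v^{2}}v^{2}\,\text{d}v$ and $\int_{0}^{\infty}e^{-n(1-k^{2})v^{2}}\,\text{d}v$ yield the same answers. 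I would present the hypergeometric route as it is shorter and uses a formula already recorded in the preliminaries.
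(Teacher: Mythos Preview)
Your proof is correct. It differs from the paper's argument in a pleasant way. The paper applies Laplace's method directly to $\int_{0}^{K}e^{-np(u)}q(u)\,\text{d}u$ with $p(u)=-2\ln\sn(u)$, expanding $p$ and $q$ near the endpoint $u=K$; you instead perform the substitution $t=\sn^{2}(u)$ to obtain explicit Euler integrals, recognize them via~\eqref{eq:gauss_hyp_int_repre} as ${}_2F_1$ values, and then extract the asymptotics from $\Gamma$-ratios together with the elementary limit ${}_2F_1(a,n+b;n+c;z)\to(1-z)^{-a}$ justified by dominated convergence (the bound $(n+b)_{j}/(n+c)_{j}\le1$, valid here since $c>b$, gives the dominating sequence $|(a)_{j}|k^{2j}/j!$). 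Both routes are short; yours has the minor advantage of avoiding any local expansion and of recycling a formula already recorded in the preliminaries, while the paper's Laplace argument is more robust if one later needs variants of the integrand for which the Euler form is unavailable. Your closing remark that Laplace's method around $u=K$ gives the same constants is exactly the approach the paper actually takes.
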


\begin{proof} The former integral, if written in the form
\[
\int_{0}^{K}e^{-np(u)}q(u)\,\text{d}u,
\]
with $p(u):=-2\ln(\sn(u))$ and $q(u):=\cn^{2}(u)$, admits a direct
application of the Laplace method, see see for instance \cite[Sec.~3.7]{olver_97}.
Note that $p(u)$ is strictly decreasing for $u\in(0,K]$ and
\begin{eqnarray*}
p(u) & = & (1-k^{2})(u-K)^{2}+O\big((u-K)^{4}\big),\\
q(u) & = & (1-k^{2})(u-K)^{2}+O\big((u-K)^{4}\big),\ \,\text{as}\ u\to K.
\end{eqnarray*}
In the case of the latter integral we keep the function $p(u)$ but
now we let
\[
q(u):=\dn^{2}(u)=1-k^{2}+O\big((u-K)^{2}\big),\ \,\text{as}\ u\to K.
\]
Again, the Laplace method gives the result. \end{proof}

\begin{proposition}\label{thm:sum-SCpoly-3456} For $x\in\mathbb{C}$
and the Stieltjes-Carlitz polynomials $p_{n}(x)$, $q_{n}(x)$, $r_{n}(x)$
and $s_{n}(x)$, defined in (\ref{eq:recur_carlitz3}), (\ref{eq:recur_carlitz4}),
(\ref{eq:recur_carlitz5}) and (\ref{eq:recur_carlitz6}), respectively,
it holds true that
\begin{eqnarray}
\sum_{n=0}^{\infty}\frac{(-1)^{n}E_{n}(k)}{(2n)!}\,p_{n}(x) & = & \int_{0}^{K}\cos(\sqrt{x}u)\cn(u)\,\text{d}u,\label{eq:h3_sum_form}\\
\sum_{n=0}^{\infty}\frac{(-1)^{n}F_{n+1}(k)}{(2n+1)!}\,q_{n}(x) & = & \int_{0}^{K}\cos(\sqrt{x}u)\cn(u)\,\text{d}u,\label{eq:h4_sum_form}\\
\sum_{n=0}^{\infty}\frac{(-1)^{n}F_{n}(k)}{(2n)!}\,r_{n}(x) & = & \int_{0}^{K}\cos(\sqrt{x}u)\dn(u)\,\text{d}u,\label{eq:h5_sum_form}\\
\sum_{n=0}^{\infty}\frac{(-1)^{n}E_{n+1}(k)}{(2n+1)!}\,s_{n}(x) & = & -\frac{\sqrt{1-k^{2}}\sin(\sqrt{x}K)}{k^{2}\sqrt{x}}+\frac{1}{k^{2}}\int_{0}^{K}\cos(\sqrt{x}u)\dn(u)\,\text{d}u,\nonumber \\
\label{eq:h6_sum_form}
\end{eqnarray}
where
\begin{eqnarray}
\hskip-1.7emE_{n}(k) & := & \int_{0}^{1}t^{2n}\sqrt{\frac{1-t^{2}}{1-k^{2}t^{2}}}\,\text{d}t\,=\,\frac{\pi(2n)!}{2^{2n+2}n!(n+1)!}\,\HG{n+\frac{1}{2},\frac{1}{2}}{n+2}{k^{2}},\label{eq:En}\\
\noalign{\smallskip}\hskip-1.7emF_{n}(k) & := & \int_{0}^{1}t^{2n}\sqrt{\frac{1-k^{2}t^{2}}{1-t^{2}}}\,\text{d}t\,=\,\frac{\pi(2n)!}{2^{2n+1}(n!)^{2}}\,\HG{n+\frac{1}{2},-\frac{1}{2}}{n+1}{k^{2}},\label{eq:Fn}
\end{eqnarray}
$n\geq0$ (the latter equations in (\ref{eq:En}), (\ref{eq:Fn})
follow from (\ref{eq:gauss_hyp_int_repre})). \end{proposition}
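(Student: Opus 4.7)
The plan is to reduce each of the four identities to an elementary integral over $[0,K]$ by substituting $t=\sn(u)$ in the integrals defining $E_n(k)$ and $F_n(k)$. Indeed, using $\mathrm{d}t=\cn(u)\dn(u)\,\mathrm{d}u$ together with $\sqrt{1-t^{2}}=\cn(u)$, $\sqrt{1-k^{2}t^{2}}=\dn(u)$ and the boundary values (\ref{eq:sn_cn_dn_0}), (\ref{eq:sn_cn_dn_K}), one obtains
\[
E_{n}(k)=\int_{0}^{K}\sn^{2n}(u)\cn^{2}(u)\,\mathrm{d}u,\qquad F_{n}(k)=\int_{0}^{K}\sn^{2n}(u)\dn^{2}(u)\,\mathrm{d}u.
\]
After this substitution, each left-hand side of (\ref{eq:h3_sum_form})--(\ref{eq:h6_sum_form}) becomes a series whose $n$-th term is itself an integral of $\sn^{2n}$ (times $\sn(u)$ in the $q_n$ and $s_n$ cases) against $\cn^{2}(u)$ or $\dn^{2}(u)$.

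The next step is to exchange summation and integration and plug the corresponding generating function into the integrand. For (\ref{eq:h3_sum_form}) and (\ref{eq:h5_sum_form}), this exchange together with (\ref{eq:gener_func_carlitz3}) respectively (\ref{eq:gener_func_carlitz5}) immediately produces the stated integrals, since $\cn^{2}(u)\cdot\cos(\sqrt{x}u)/\cn(u)=\cn(u)\cos(\sqrt{x}u)$ and $\dn^{2}(u)\cdot\cos(\sqrt{x}u)/\dn(u)=\dn(u)\cos(\sqrt{x}u)$. For (\ref{eq:h4_sum_form}) and (\ref{eq:h6_sum_form}), the index shift in $F_{n+1}$ and $E_{n+1}$ produces an extra factor $\sn(u)$, and inserting (\ref{eq:gener_func_carlitz4}) or (\ref{eq:gener_func_carlitz6}) reduces the problem to computing
\[
\int_{0}^{K}\sn(u)\dn(u)\,\frac{\sin(\sqrt{x}u)}{\sqrt{x}}\,\mathrm{d}u\quad\text{and}\quad\int_{0}^{K}\sn(u)\cn(u)\,\frac{\sin(\sqrt{x}u)}{\sqrt{x}}\,\mathrm{d}u,
\]
respectively. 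Using (\ref{eq:deriv_sn_cn_dn}) to write $\sn(u)\dn(u)\,\mathrm{d}u=-\mathrm{d}\cn(u)$ and $\sn(u)\cn(u)\,\mathrm{d}u=-k^{-2}\mathrm{d}\dn(u)$, an integration by parts converts these into $\int_{0}^{K}\cn(u)\cos(\sqrt{x}u)\,\mathrm{d}u$ and $k^{-2}\int_{0}^{K}\dn(u)\cos(\sqrt{x}u)\,\mathrm{d}u$ plus boundary contributions. The boundary term for $\cn$ vanishes because $\cn(K)=0$, giving the clean form of (\ref{eq:h4_sum_form}); the boundary term for $\dn$ equals $-k^{-2}\sqrt{1-k^{2}}\sin(\sqrt{x}K)/\sqrt{x}$ by (\ref{eq:sn_cn_dn_K}), which accounts for the additional summand on the right-hand side of (\ref{eq:h6_sum_form}).

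The main technical obstacle is justifying the interchange of summation and integration, because the generating functions (\ref{eq:gener_func_carlitz3})--(\ref{eq:gener_func_carlitz6}) have their nearest singularity at $\sn(u)=1$, i.e.\ exactly at $u=K$, the upper endpoint of integration. I would deal with this by verifying the Fubini hypothesis directly: combining the leading asymptotic estimates of Proposition~\ref{thm:asympt-SC-poly}, which give $|p_{n}(x)/(2n)!|=O(n^{-1/2})$, $|q_{n}(x)/(2n+1)!|=O(n^{-3/2})$, $|r_{n}(x)/(2n)!|=O(n^{-3/2})$, $|s_{n}(x)/(2n+1)!|=O(n^{-1/2})$, with the estimates $E_{n}(k)=O(n^{-3/2})$ and $F_{n}(k)=O(n^{-1/2})$ from Lemma~\ref{thm:asympt-int-sn-cn-dn}, one shows in each case that
\[
\sum_{n=0}^{\infty}\int_{0}^{K}\left|a_{n}(u)\right|\mathrm{d}u<\infty,
\]
where $a_{n}(u)$ denotes the $n$-th term of the integrated series. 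In every case the product of the two bounds is $O(n^{-2})$, so absolute convergence holds and Fubini's theorem legitimises the interchange, completing the proof.
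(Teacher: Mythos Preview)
Your proof is correct and follows essentially the same approach as the paper: the substitution $t=\sn(u)$ in the integrals for $E_n(k)$ and $F_n(k)$, the interchange of sum and integral justified by Fubini via the asymptotics of Proposition~\ref{thm:asympt-SC-poly} and Lemma~\ref{thm:asympt-int-sn-cn-dn}, and the integration by parts in the $q_n$ and $s_n$ cases are exactly the steps carried out in the paper. Your explicit bookkeeping of the $O(n^{-2})$ decay in each of the four pairings is a nice touch that the paper leaves implicit.
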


\begin{proof} Substitution $t=\sn(u)$ in the integral (\ref{eq:En})
brings the LHS of (\ref{eq:h3_sum_form}) to the form 
\[
\sum_{n=0}^{\infty}\frac{(-1)^{n}p_{n}(x)}{(2n)!}\,\int_{0}^{K}\sn^{2n}(u)\cn^{2}(u)\,\text{d}u.
\]
Then after interchanging the integral and the sum and using the generating
function (\ref{eq:gener_func_carlitz3}) one arrives at the RHS of
(\ref{eq:h3_sum_form}). The interchanging of summation and integration
is justified by the Fubini theorem and the respective asymptotic formulas
in Proposition~\ref{thm:asympt-SC-poly} and Lemma~\ref{thm:asympt-int-sn-cn-dn}.

Analogously, substitution $t=\sn(u)$ in the integral (\ref{eq:Fn})
brings the LHS of (\ref{eq:h4_sum_form}) to the form
\[
\sum_{n=0}^{\infty}\frac{(-1)^{n}q_{n}(x)}{(2n+1)!}\,\int_{0}^{K}\sn^{2n+2}(u)\dn^{2}(u)\,\text{d}u.
\]
By interchanging the integral and the sum and using the generating
function (\ref{eq:gener_func_carlitz4}) one obtains the expression
\[
\int_{0}^{K}\frac{\sin(\sqrt{x}u)}{\sqrt{x}}\sn(u)\dn(u)\,\text{d}u.
\]
Integrating by parts, using (\ref{eq:deriv_sn_cn_dn}) and the special
values~\eqref{eq:sn_cn_dn_0}, \eqref{eq:sn_cn_dn_K} leads to the
RHS of (\ref{eq:h4_sum_form}). The interchanging of summation and
integration is again possible owing to the respective asymptotic formulas
in Proposition~\ref{thm:asympt-SC-poly} and Lemma~\ref{thm:asympt-int-sn-cn-dn}.

With the aid of the same substitution as above the LHS of (\ref{eq:h5_sum_form})
is transformed to
\[
\sum_{n=0}^{\infty}\frac{(-1)^{n}r_{n}(x)}{(2n)!}\,\int_{0}^{K}\sn(u)^{2n}\dn^{2}(u)\,\text{d}u.
\]
Relying on Proposition~\ref{thm:asympt-SC-poly} and Lemma~\ref{thm:asympt-int-sn-cn-dn}
one can interchange the integral and the sum, and using the generating
function (\ref{eq:gener_func_carlitz5}) one arrives at the RHS of
(\ref{eq:h5_sum_form}).

Very analogously as in the foregoing equations the LHS of (\ref{eq:h6_sum_form})
is shown to be equal to
\[
\sum_{n=0}^{\infty}\frac{(-1)^{n}s_{n}(x)}{(2n+1)!}\,\int_{0}^{K}\sn^{2n+2}(u)\cn^{2}(u)\,\text{d}u.
\]
Interchanging the integral and the sum is again justifiable, and using
the generating function~\eqref{eq:gener_func_carlitz6} one obtains
the expression
\[
\int_{0}^{K}\frac{\sin(\sqrt{x}u)}{\sqrt{x}}\sn(u)\cn(u)\,\text{d}u.
\]
Now we can integrate by parts while taking into account (\ref{eq:deriv_sn_cn_dn}),
(\ref{eq:sn_cn_dn_0}) and (\ref{eq:sn_cn_dn_K}), and we get the
desired identity. \end{proof}

\section{A three-term recurrence equation with coefficients depending linearly
on the index\label{sec:A-three-term-recurrence}}

The problem of finding Hankel matrices commuting with a given Jacobi
matrix of the form (\ref{eq:jacobi_mat}) finally leads to a three-term
recurrence equation. This section is devoted to a basic study of such
an equation in its own right. The coefficients in the equation are
of a particular form which is dictated by the intended application.

We will discuss the three-term recurrence 
\begin{equation}
(k+k^{-1})(n+\sigma)h_{n}-(n+\xi)h_{n-1}-(n+\eta)h_{n+1}=0,\label{eq:three-term-eq}
\end{equation}
$n\geq1$, where $k\in(0,1)$ and $\sigma,\xi,\eta\in\mathbb{C}$
are parameters. We claim that, up to a constant multiplier, equation
(\ref{eq:three-term-eq}) has exactly one square summable solution
$(h_{n})_{n\geq0}$. One can show that this is true even for a somewhat
more general type of equation.

Denote by $\mathbf{e}_{n}$, $n=0,1,2,\ldots$, the semi-infinite
column vectors with all zero entries except a unit on the $n$th position
(counting from $0$ upwards). Furthermore, $\mathbf{k}$ is another
semi-infinite column vector,
\[
\mathbf{k}:=(1,k,k^{2},\ldots)^{T}.
\]
Further we introduce two semi-infinite matrices, $L$ and $G$, defined
as follows,
\begin{equation}
L_{m,n}:=(k+k^{-1})\delta_{m,n}-\delta_{m,n+1}-\delta_{m+1,n},\ G_{m,n}:=\frac{k^{|m-n|+1}}{1-k^{2}}\,,\ \,m,n\in\mathbb{N}_{0}.\label{eq:L-G}
\end{equation}
The matrices satisfy the equations
\begin{equation}
LG=I+\frac{k^{2}}{1-k^{2}}\,\mathbf{e}_{0}\hskip0.03em\mathbf{k}^{T},\ GL=I+\frac{k^{2}}{1-k^{2}}\,\mathbf{k}\hskip0.06em\mathbf{\mathbf{e}}_{0}^{\,T},\label{eq:LG_GL}
\end{equation}
where $I$ is the unit matrix. The matrix product of $L$ and $G$
makes good sense since $L$ is a band matrix.

\begin{proposition}\label{thm:hn_l2} Let us consider the three-term
recurrence equation
\begin{equation}
(k+k^{-1})(1+s_{n})h_{n}-(1+x_{n})h_{n-1}-(1+y_{n})h_{n+1}=0,\ \,n\geq1,\label{eq:threetermeq-gen}
\end{equation}
where $k\in(0,1)$ and $(s_{n})_{n\geq1}$, $(x_{n})_{n\geq1}$ and
$(y_{n})_{n\geq1}$ are given complex sequences. Assume that $1+x_{n}\neq0$
for all $n\geq1$. If
\[
\lim_{n\to\infty}s_{n}=\lim_{n\to\infty}x_{n}=\lim_{n\to\infty}y_{n}=0
\]
then, up to a constant multiplier, there exists exactly one solution
to (\ref{eq:threetermeq-gen}) which is square summable. \end{proposition}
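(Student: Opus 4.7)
The plan is to invoke the classical Poincar\'e--Perron theorem on second-order linear difference equations with asymptotically constant coefficients.

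First, I would isolate the limiting constant-coefficient equation. Because $y_n \to 0$, for $n$ larger than some $N$ one has $1+y_n \neq 0$, and the recurrence (\ref{eq:threetermeq-gen}) can be solved explicitly for $h_{n+1}$:
\[
h_{n+1} = \frac{(k+k^{-1})(1+s_n)}{1+y_n}\,h_n - \frac{1+x_n}{1+y_n}\,h_{n-1}, \qquad n \geq N,
\]
with coefficients converging to $k+k^{-1}$ and $-1$ respectively. The associated characteristic polynomial $\lambda^{2} - (k+k^{-1})\lambda + 1$ factors as $(\lambda-k)(\lambda-k^{-1})$, and its roots have distinct moduli because $k \in (0,1)$.

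Perron's theorem then supplies two linearly independent solutions $h^{(1)}, h^{(2)}$, initially defined for $n$ large, satisfying $h^{(1)}_{n+1}/h^{(1)}_n \to k$ and $h^{(2)}_{n+1}/h^{(2)}_n \to k^{-1}$ as $n \to \infty$. Each extends uniquely back to $n=0$ via the backward recurrence $h_{n-1} = \bigl[(k+k^{-1})(1+s_n)h_n - (1+y_n)h_{n+1}\bigr]/(1+x_n)$, which is well-defined by the hypothesis $1+x_n \neq 0$. Since $h^{(1)}_n$ decays geometrically with ratio tending to $k<1$, it belongs to $\ell^{2}(\mathbb{N}_0)$; $h^{(2)}_n$ grows geometrically with ratio tending to $k^{-1}>1$ and is therefore not square summable.

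Any solution of the recurrence is a linear combination $h = A\,h^{(1)} + B\,h^{(2)}$; if $B \neq 0$, then $|h^{(1)}_n/h^{(2)}_n| \to 0$ forces $h_n \sim B\,h^{(2)}_n$ at infinity, violating square summability. Consequently, the $\ell^{2}$ solutions form exactly the one-dimensional subspace $\mathbb{C}\,h^{(1)}$, as claimed. The only delicate point is the applicability of Perron's theorem in this setting, which reduces to verifying distinct moduli of the limiting characteristic roots---immediate since $k \neq k^{-1}$ for $k \in (0,1)$.
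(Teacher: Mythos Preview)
Your argument is correct and takes a genuinely different route from the paper. The paper proceeds in an operator-theoretic fashion: it introduces an explicit Toeplitz matrix $L$ encoding the limiting constant-coefficient recurrence and an explicit ``Green'' matrix $G$ satisfying $GL = I + (\text{rank one})$, rewrites the equation as $L\tilde{h} = R\tilde{h}$ with $R$ a small perturbation, and then shows that any $\ell^{2}$ solution must satisfy $\tilde{h} = c\,(I-GR)^{-1}\mathbf{k}$ by a Neumann-series argument once $N$ is large enough that $\|GR\|<1$. Existence is obtained by verifying that this vector indeed solves the recurrence.

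Your approach instead invokes the Poincar\'e--Perron theorem directly, which is both shorter and entirely classical; the hypotheses (asymptotically constant coefficients, characteristic roots $k$ and $k^{-1}$ of distinct modulus, and nonvanishing of the trailing coefficient $(1+x_n)/(1+y_n)$) are all in place. What the paper's method buys is self-containment---no external theorem is cited---and, as the authors note in the Remark immediately following the proposition, the same machinery can be pushed to extract sharper asymptotics of the $\ell^{2}$ solution (e.g.\ $h_n = O(n^{r}k^{n})$ or $h_n = O(k^{n})$ under mild decay assumptions on $s_n,x_n,y_n$). Your route gives only $|h_n|^{1/n}\to k$, which suffices for the proposition as stated but would need an additional argument for those refinements.
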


\begin{proof} It suffices to consider equation (\ref{eq:threetermeq-gen})
on a neighborhood of $\infty$ determined by a lower bound $N\in\mathbb{N}$.
We can choose $N$ sufficiently large and this will be specified more
precisely later on. Let us denote
\[
\rho_{n}:=\sup_{k\geq n}\,\max\{|s_{k}|,|x_{k}|,|y_{k}|\}.
\]
By the assumption, $\lim_{n\to\infty}\rho_{n}=0$.

(I)~Suppose $(h_{n})_{n\geq0}$ is a square summable solution to
(\ref{eq:threetermeq-gen}). Let
\[
\tilde{h}_{n}:=h_{N+n},\ n\in\mathbb{N}_{0}.
\]
Then $(\tilde{h}_{n})_{n\geq0}$ solves the equation
\begin{equation}
(k+k^{-1})\tilde{h}_{n}-\tilde{h}_{n-1}-\tilde{h}_{n+1}=\big(-(k+k^{-1})s_{N+n}\tilde{h}_{n}+x_{N+n}\tilde{h}_{n-1}+y_{N+n}\tilde{h}_{n+1}\big),\ n\geq1.\label{eq:threetermeq-rewritten}
\end{equation}
The equation can be rewritten in terms of matrices and vectors. Let
us introduce a semi-infinite matrix $R$,
\[
R_{m,n}:=\big(-(k+k^{-1})s_{N+m}\delta_{m,n}-x_{N+m}\delta_{m,n+1}-y_{N+m}\delta_{m+1,n}\big),\ \,m,n\in\mathbb{N}_{0}.
\]
Then (\ref{eq:threetermeq-rewritten}) means that $(L\tilde{h})_{m}=(R\tilde{h})_{m}$
for $m\geq1$. Hence, in view of (\ref{eq:LG_GL}), we have for all
$n\geq0$,
\[
\tilde{h}_{n}=(GL\tilde{h})_{n}-\frac{k^{2}}{1-k^{2}}\,\tilde{h}_{0}k^{n}=(GR\tilde{h})_{n}-G_{n,0}(R\tilde{h})_{0}+G_{n,0}(L\tilde{h})_{0}-\frac{k^{2}}{1-k^{2}}\,\tilde{h}_{0}k^{n}.
\]
In view of the form of $G$ in (\ref{eq:L-G}) it follows that there
exists $c\in\mathbb{C}$ such that
\[
\tilde{\mathbf{h}}=GR\tilde{\mathbf{h}}+c\hskip0.05em\mathbf{k}
\]
where $\mathbf{\tilde{\mathbf{h}}}$ is a column vector whose entries
are $\tilde{h}_{n}$, $n\geq0$.

$G$ and $R$ can be regarded as matrix operators on $\ell^{2}(\mathbb{N}_{0})$.
As such, it is clear that both of them are bounded, We can even estimate
\[
\|R\|\leq\rho_{N}\,(k+k^{-1}+2).
\]
Choosing $N$ sufficiently large so that $\|GR\|<1$ we conclude that
\[
\tilde{\mathbf{h}}=c\,(I-GR)^{-1}\mathbf{k}\in\ell^{2}(\mathbb{N}_{0}).
\]

This shows uniqueness. In fact, $h_{n}$, $n\geq N$, is prescribed
unambiguously up to a constant multiplier. But (\ref{eq:threetermeq-gen})
along with the assumption $1+x_{n}\neq0$ implies uniqueness for all
$n\geq0$.

(II)~Conversely, with the same choice of $N$ as above, consider
the square summable vector
\[
\tilde{\mathbf{h}}:=(I-GR)^{-1}\mathbf{k}=\mathbf{k}+GR\tilde{\mathbf{h}}.
\]
Note that $(L\mathbf{k})_{n}=0$ for $n\geq1$. Referring again to
(\ref{eq:LG_GL}) we get, for $n\geq1$,
\[
(L\tilde{\mathbf{h}})_{n}=\left(\left(I+\frac{k^{2}}{1-k^{2}}\,\mathbf{e}_{0}\hskip0.03em\mathbf{k}^{T}\right)\!R\tilde{\mathbf{h}}\right)_{\!n}=(R\tilde{\mathbf{h}})_{n}.
\]
This mean that $h_{n}:=\tilde{h}_{n-N}$, $n>N$, satisfies (\ref{eq:threetermeq-gen})
on a neighborhood of $\infty$. Since $1+x_{n}\neq0$ for $n\geq1$,
$(h_{n})_{n>N}$ can be extended to a solution $(h)_{n\geq0}$ of
equation (\ref{eq:threetermeq-gen}) by a descending three-term recurrence.
\end{proof}

\begin{remark} Using the same technique as in the proof of Proposition~\ref{thm:hn_l2}
with slightly stronger assumptions one can obtain, in a routine way,
a more detailed information about the asymptotic behavior of the square
summable solution to (\ref{eq:threetermeq-gen}). No doubt this type
of information is in principle useful but we shall not need it in
the sequel. Nevertheless let us just mention what can be shown but
doing so we omit the proof.

\emph{Let $(h_{n})_{n\geq0}$ be a square summable solution to (\ref{eq:threetermeq-gen})
where again $k\in(0,1)$, and $(s_{n})_{n\geq1}$, $(x_{n})_{n\geq1}$
and $(y_{n})_{n\geq1}$ are complex sequences. If
\[
s_{n}=O(n^{-1}),\ x_{n}=O(n^{-1}),\ y_{n}=O(n^{-1})\ \,\text{as}\ n\to\infty,
\]
then $h_{n}=O(n^{r}k^{n})$ for some $r\geq0$ sufficiently large.
If
\[
s_{n}=O(n^{-1-\epsilon}),\ x_{n}=O(n^{-1-\epsilon}),\ y_{n}=O(n^{-1-\epsilon})\ \,\text{as}\ n\to\infty
\]
for some $\epsilon>0$ then $h_{n}=O(k^{n})$}. \end{remark}

Further we wish to present a quadratic identity for the hypergeometric
functions which will be helpful in the sequel. The identity may be
new. At least we were not able to trace it out in the most common
literature dedicated to the hypergeometric functions.

Let us first recall several identities for contiguous functions \cite[Eq.\,15.2.10]{AbramowitzStegun}
\begin{equation}
(c-a)\,\HG{a-1,b}{c}{z}+\big(2a-c+(b-a)z\big)\,\HG{a,b}{c}{z}+a(z-1)\,\HG{a+1,b}{c}{z}=0\label{eq:contig-a}
\end{equation}
and \cite[Eqs.\,15.2.18,\,15.2.20]{AbramowitzStegun}
\begin{eqnarray}
\HG{a,b+1}{c}{z} & = & \frac{a-c}{b(z-1)}\,\HG{a-1,b}{c}{z}+\frac{c-a-b}{b(z-1)}\,\HG{a,b}{c}{z},\label{eq:contig-b}\\
\noalign{\smallskip}\HG{a,b}{c+1}{z} & = & \frac{c}{(c-b)z}\,\HG{a-1,b}{c}{z}+\frac{c\,(z-1)}{(c-b)z}\,\HG{a,b}{c}{z}.\label{eq:contig-c}
\end{eqnarray}

\begin{proposition} For $a,b,c,d,z\in\mathbb{C}$, $|z|<1$, it holds
true that
\begin{eqnarray}
 &  & \hskip-1.8em(a-c+1)\,\HG{a,b}{c}{z}\HG{a-c+2,b-c+1}{2-c}{z}\nonumber \\
\noalign{\smallskip} &  & -\,a\hskip.01em\,\HG{a+1,b}{c}{z}\HG{a-c+1,b-c+1}{2-c}{z}=\,(1-c)(1-z)^{-a-b+c-1}\nonumber \\
\label{eq:2F1_quadr}
\end{eqnarray}
provided all hypergeometric functions occurring in the expression
are well defined. \end{proposition}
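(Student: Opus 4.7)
The identity has the unmistakable structure of a Wronskian relation for the hypergeometric equation, so the plan is to exploit the fact that
\[
u_1(z) := {}_2F_1(a,b;c;z), \qquad u_2(z) := z^{1-c}\,{}_2F_1(a-c+1, b-c+1; 2-c; z)
\]
are the two standard Frobenius solutions of
\[
z(1-z) y'' + \bigl[c - (a+b+1)z\bigr] y' - ab\, y = 0
\]
at the regular singular point $z=0$. By Abel's formula applied to this ODE, the Wronskian is $W(z) = u_1 u_2' - u_1' u_2 = C\, z^{-c}(1-z)^{c-a-b-1}$ for a constant $C$, and inspecting the leading behavior as $z \to 0^+$ (where $u_1 \to 1$, $u_2 \sim z^{1-c}$, $u_2' \sim (1-c) z^{-c}$) pins down $C = 1-c$.

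Next I would factor $z^{1-c}$ out of $u_2$ and set $v_1 := {}_2F_1(a,b;c;z)$, $v_2 := {}_2F_1(a-c+1, b-c+1; 2-c; z)$. A direct computation of $u_1 u_2' - u_1' u_2$ after this factorization recasts the Wronskian identity as
\[
(1-c)\, v_1 v_2 + z\,(v_1 v_2' - v_1' v_2) = (1-c)(1-z)^{c-a-b-1}.
\]
The final ingredient is the elementary contiguous-derivative relation
\[
z\,\frac{\mathrm{d}}{\mathrm{d}z}\,{}_2F_1(\alpha, \beta; \gamma; z) = \alpha\bigl[\,{}_2F_1(\alpha+1, \beta; \gamma; z) - {}_2F_1(\alpha, \beta; \gamma; z)\bigr],
\]
which follows from $\frac{\mathrm{d}}{\mathrm{d}z}\bigl[z^\alpha\,{}_2F_1(\alpha,\beta;\gamma;z)\bigr] = \alpha z^{\alpha-1}\,{}_2F_1(\alpha+1,\beta;\gamma;z)$ (verified term-by-term via $(\alpha)_n(\alpha+n) = \alpha\,(\alpha+1)_n$). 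Applying this with parameter triples $(a,b,c)$ and $(a-c+1, b-c+1, 2-c)$ to rewrite $z v_1'$ and $z v_2'$ in terms of the shifted functions appearing in (\ref{eq:2F1_quadr}) and substituting, the diagonal $v_1 v_2$ contributions combine with coefficient $(1-c) - (a-c+1) + a = 0$ and cancel, leaving precisely the claimed identity.

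I do not foresee a serious obstacle: each step is routine once the Wronskian interpretation is in place. The only real subtlety is the conceptual one of recognizing that interpretation; a direct alternative using only the contiguous relations (\ref{eq:contig-a})--(\ref{eq:contig-c}) to match Taylor coefficients in $z$ would be feasible but less transparent and would obscure the origin of the factor $(1-z)^{c-a-b-1}$ on the right-hand side. The restriction $|z|<1$ in the statement is needed only for convergence of the series; the identity then extends in the parameters by analytic continuation wherever all four hypergeometric functions are defined.
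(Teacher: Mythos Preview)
Your argument is correct and complete. Recognising the identity as the Wronskian of the two Frobenius solutions at $z=0$, invoking Abel's formula to produce the factor $(1-c)z^{-c}(1-z)^{c-a-b-1}$, stripping off $z^{-c}$, and then converting $z v_1'$ and $z v_2'$ into parameter-shifted hypergeometric functions via $z\frac{\text{d}}{\text{d}z}\,{}_2F_1(\alpha,\beta;\gamma;z)=\alpha\bigl[{}_2F_1(\alpha+1,\beta;\gamma;z)-{}_2F_1(\alpha,\beta;\gamma;z)\bigr]$ gives exactly (\ref{eq:2F1_quadr}) after the $v_1v_2$ terms cancel.

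This is a genuinely different route from the paper's. The paper multiplies the left-hand side by $(1-z)^{a+b-c+1}$, calls the result $F(z)$, and then shows $F'(z)=0$ by a direct but laborious computation: it differentiates, applies the standard derivative rule $\frac{\text{d}}{\text{d}z}\,{}_2F_1=\frac{ab}{c}\,{}_2F_1(a+1,b+1;c+1;\cdot)$, and then repeatedly uses the contiguous relations (\ref{eq:contig-a})--(\ref{eq:contig-c}) to reduce everything to a linear combination over ${}_2F_1(a,b;c;z)$ and ${}_2F_1(a+1,b;c;z)$, whose coefficients turn out to vanish. The constant $1-c$ is then read off at $z=0$. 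Your approach explains \emph{why} the right-hand side has the shape it does (Abel's formula forces $z^{-c}(1-z)^{c-a-b-1}$), and the only computation required is a one-line cancellation of the $v_1v_2$ coefficient. The paper's approach, by contrast, is self-contained in that it uses only the three contiguous relations already quoted and avoids any appeal to the differential equation, at the cost of a substantially heavier symbolic calculation.
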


\begin{proof} The verification is very straightforward though rather
tedious. We omit some computational details. Let
\begin{eqnarray*}
F(z) & := & (1-z)^{a+b-c+1}\Bigg(\!(a-c+1)\,\HG{a,b}{c}{z}\HG{a-c+2,b-c+1}{2-c}{z}\\
 &  & \hskip7em-\,a\,\HG{a+1,b}{c}{z}\HG{a-c+1,b-c+1}{2-c}{z}\!\Bigg).
\end{eqnarray*}
We are going to show that $F'(z)=0$. The constant value of $F(z)$
is then determined by putting $z=0$.

To evaluate the derivative one can use the well-known rule
\[
\frac{\text{d}}{\text{d}z}\,\HG{a,b}{c}{z}=\frac{ab}{c}\,\HG{a+1,b+1}{c+1}{z}.
\]
Afterwards we apply (\ref{eq:contig-b}), (\ref{eq:contig-c}) so
that all hypergeometric functions occurring in the resulting expression
have for the second parameter either $b$ or $b-c+1$ and, similarly,
for the third parameter either $c$ or $2-c$. This way we get an
equation of the form
\[
(1-z)^{-a-b+c}\,F'(z)=\sum_{j=0}^{3}A_{j}\,\HG{a-c+j,b-c+1}{2-c}{z}
\]
where $A_{j}$'s are linear combinations of $\,_{2}F_{1}(a+i,b;c;z)$,
$i=-1,0,1,2$, over the field of rational functions in $a$, $b$,
$c$, $z$. Then one can use (\ref{eq:contig-a}) to express the $A_{j}$'s
as linear combinations of $\,_{2}F_{1}(a,b;c;z)$ and $\,_{2}F_{1}(a+1,b;c;z)$
only. Explicitly,
\begin{eqnarray*}
A_{0} & = & -\frac{(a-1)a(a-c+1)}{bz}\,\HG{a+1,b}{c}{z},\\
\noalign{\smallskip}A_{1} & = & \frac{a(a-c+1)(a+b-c+2)}{bz}\,\HG{a,b}{c}{z}\\
\noalign{\smallskip} &  & -\,\frac{a(a-c+1)\big(c-2a+(a-b)z\big)}{bz}\,\HG{a+1,b}{c}{z},\\
\noalign{\smallskip}A_{2} & = & \frac{(a-c+1)(a+b-c+2)\big(c-2a-2+(a-b+1)z\big)}{bz}\,\HG{a,b}{c}{z}\\
\noalign{\smallskip} &  & +\,\frac{a(a-c+1)^{2}(z-1)}{bz}\,\HG{a+1,b}{c}{z},\\
\noalign{\smallskip}A_{3} & = & -\frac{(a-c+1)(a-c+2)(a+b-c+2)(z-1)}{bz}\,\HG{a,b}{c}{z}.
\end{eqnarray*}
The equation can be then rewritten as
\[
(1-z)^{-a-b+c}\,F'(z)=B_{0}\,\HG{a,b}{c}{z}+B_{1}\,\HG{a+1,b}{c}{z}.
\]
But with the aid of (\ref{eq:contig-a}) it can be seen quite straightforwardly
that $B_{0}=B_{1}=0$. \end{proof}

\begin{lemma}\label{thm:hI-hII} Assume that $k\in(0,1)$, $\xi,\eta,\sigma\in\mathbb{C}$
and $\xi-\eta\notin\mathbb{Z}$. Then the sequences $(h_{n}^{(I)})_{n\geq N}$,
$(h_{n}^{(II)})_{n\geq N}$, with
\begin{eqnarray*}
h_{n}^{(I)} & := & k^{n}\,\HG{n+\eta,\omega(\xi,\eta,\sigma)}{\eta-\xi}{1-k^{2}},\\
\noalign{\smallskip}h_{n}^{(II)} & := & \frac{k^{n}\Gamma(n+\xi+1)}{\Gamma(n+\eta)}\text{\,}\HG{n+\xi+1,\omega(\xi,\eta,\sigma)+\xi-\eta+1}{\xi-\eta+2}{1-k^{2}},
\end{eqnarray*}
and
\begin{equation}
\omega(\xi,\eta,\sigma):=\frac{-\xi-k^{2}\eta+(1+k^{2})\sigma}{1-k^{2}}\label{eq:b-x_y_s}
\end{equation}
are well defined for $N\in\mathbb{N}$ sufficiently large and solve
equation (\ref{eq:three-term-eq}) for $n>N$. Moreover,
\begin{equation}
h_{n+1}^{(II)}h_{n}^{(I)}-h_{n+1}^{(I)}h_{n}^{(II)}=\frac{\Gamma(n+\xi+1)}{\Gamma(n+\eta+1)}\,(\xi-\eta+1)k^{-2\xi-2\omega(\xi,\eta,\sigma)-1},\ n\geq N.\label{eq:h_wronsk}
\end{equation}
Hence these solutions are linearly independent. \end{lemma}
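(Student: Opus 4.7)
The plan is to verify the two pieces separately and then to read the Wronskian identity~\eqref{eq:h_wronsk} off the quadratic identity~\eqref{eq:2F1_quadr}. Well-definedness of both sequences for $N$ sufficiently large is immediate: the third parameters $\eta-\xi$ and $\xi-\eta+2$ are non-integer by the hypothesis $\xi-\eta\notin\mathbb{Z}$, the hypergeometric series converge absolutely since $|1-k^{2}|<1$, and the Gamma quotient $\Gamma(n+\xi+1)/\Gamma(n+\eta)$ is finite as soon as neither $n+\eta$ nor $n+\xi+1$ meets a pole, which is ensured by taking $N$ large.

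To check that $h_{n}^{(I)}$ solves~\eqref{eq:three-term-eq}, I would substitute $a=n+\eta$, $b=\omega$, $c=\eta-\xi$, $z=1-k^{2}$ into the contiguous relation~\eqref{eq:contig-a}. The outer coefficients reduce immediately to $c-a=-(n+\xi)$ and $a(z-1)=-(n+\eta)k^{2}$. The middle coefficient $2a-c+(b-a)z$ requires the one non-trivial algebraic step: rewriting $(1-k^{2})\omega=-\xi-k^{2}\eta+(1+k^{2})\sigma$ via~\eqref{eq:b-x_y_s}, a short simplification collapses it to $(1+k^{2})(n+\sigma)$. Multiplying the resulting identity by $k^{n-1}$ and using $(1+k^{2})/k=k+k^{-1}$ converts the three terms into exactly $-(n+\xi)h_{n-1}^{(I)}$, $(k+k^{-1})(n+\sigma)h_{n}^{(I)}$, and $-(n+\eta)h_{n+1}^{(I)}$, which is~\eqref{eq:three-term-eq}.

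For $h_{n}^{(II)}$ I would exploit a symmetry of~\eqref{eq:three-term-eq}. The substitution $h_{n}=\bigl(\Gamma(n+\xi+1)/\Gamma(n+\eta)\bigr)g_{n}$, together with $(n+\xi)\Gamma(n+\xi)=\Gamma(n+\xi+1)$ and $\Gamma(n+\eta+1)=(n+\eta)\Gamma(n+\eta)$, transforms~\eqref{eq:three-term-eq} into
\[
(k+k^{-1})(n+\sigma)g_{n}-(n+\eta-1)g_{n-1}-(n+\xi+1)g_{n+1}=0,
\]
which is again of the form~\eqref{eq:three-term-eq} but with $(\xi,\eta)$ replaced by $(\eta-1,\xi+1)$. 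The preceding step, applied to this transformed recurrence, produces the solution $g_{n}=k^{n}{}_{2}F_{1}(n+\xi+1,\omega(\eta-1,\xi+1,\sigma);\xi-\eta+2;1-k^{2})$. A quick computation from~\eqref{eq:b-x_y_s} gives $\omega(\eta-1,\xi+1,\sigma)=\omega(\xi,\eta,\sigma)+\xi-\eta+1$, so multiplying $g_{n}$ back by $\Gamma(n+\xi+1)/\Gamma(n+\eta)$ recovers precisely $h_{n}^{(II)}$.

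Finally, the Wronskian~\eqref{eq:h_wronsk} is a direct consequence of~\eqref{eq:2F1_quadr} under the same identification $a=n+\eta$, $b=\omega$, $c=\eta-\xi$, $z=1-k^{2}$. One checks that $a-c+1=n+\xi+1$, $a-c+2=n+\xi+2$, $b-c+1=\omega+\xi-\eta+1$, $2-c=\xi-\eta+2$, $1-c=\xi-\eta+1$, and $(1-z)^{-a-b+c-1}=k^{-2n-2\omega-2\xi-2}$, so the left-hand side of~\eqref{eq:2F1_quadr} becomes, up to scalars, a linear combination of the hypergeometric factors of $h_{n}^{(I)}h_{n+1}^{(II)}$ and $h_{n+1}^{(I)}h_{n}^{(II)}$. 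Multiplying by $k^{2n+1}\Gamma(n+\xi+1)/\Gamma(n+\eta+1)$ and collecting Gamma ratios via the two factorial identities used above reproduces~\eqref{eq:h_wronsk} on the nose. Linear independence follows at once since $\xi-\eta+1\neq 0$ by hypothesis. I do not foresee a serious obstacle; the only step that truly requires care is the algebraic collapse $2a-c+(b-a)z=(1+k^{2})(n+\sigma)$, which in fact explains and motivates the particular form of $\omega$ chosen in~\eqref{eq:b-x_y_s}.
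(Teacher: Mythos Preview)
Your proof is correct and follows essentially the same approach as the paper: verify the recurrence via the contiguous relation~\eqref{eq:contig-a} and read off the Wronskian from the quadratic identity~\eqref{eq:2F1_quadr}. Your treatment of $h_{n}^{(II)}$ through the symmetry $(\xi,\eta)\mapsto(\eta-1,\xi+1)$ of~\eqref{eq:three-term-eq} is a pleasant variant---the paper simply applies~\eqref{eq:contig-a} directly to both sequences---but the two verifications amount to the same computation once the Gamma factors are unwound.
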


\begin{proof} With our assumptions, $h_{n}^{(I)}$ is well defined
for all $n\in\mathbb{Z}$ and $h_{n}^{(II)}$ is well defined for
all $n\in\mathbb{Z}$, $-n-\xi\notin\mathbb{N}$. Furthermore, it
is straightforward to verify with the aid of (\ref{eq:contig-a})
that both $(h_{n}^{(I)})$ and $(h_{n}^{(II)})$ satisfy (\ref{eq:three-term-eq})
for $n>N$. Finally, (\ref{eq:h_wronsk}) is a direct consequence
of (\ref{eq:2F1_quadr}). \end{proof}

Let us recall that
\begin{eqnarray}
 &  & \hskip-1.7em\HG{a,b}{a+b-c+1}{1-z}\!=\,\frac{\Gamma(1+a+b-c)\Gamma(1-c)}{\Gamma(1+a-c)\Gamma(1+b-c)}\,\HG{a,b}{c}{z}\nonumber \\
\noalign{\smallskip} &  & \hskip9em+\,\frac{\Gamma(1+a+b-c)\Gamma(c-1)}{\Gamma(a)\Gamma(b)}\,z^{1-c}\,\HG{a-c+1,b-c+1}{2-c}{z},\nonumber \\
\label{eq:2F1_1-z_z}
\end{eqnarray}
see \cite[Eq.\,15.3.6]{AbramowitzStegun}.

\begin{proposition}\label{thm:hn-plus} Assume that $k\in(0,1)$,
$\xi,\eta,\sigma\in\mathbb{C}$ and $-\xi\notin\mathbb{N}$. Let
\begin{eqnarray}
h_{n}^{(+)} & := & \frac{(1-k^{2})^{-\xi+\eta-1}\,k^{n}\Gamma(n+\xi+1)}{\Gamma\big(n+\omega(\xi,\eta,\sigma)+\xi+1\big)}\,\HG{n+\eta,\omega(\xi,\eta,\sigma)}{n+\xi+\omega(\xi,\eta,\sigma)+1}{k^{2}}\nonumber \\
\noalign{\smallskip} & = & \frac{k^{n}\Gamma(n+\xi+1)}{\Gamma\big(n+\omega(\xi,\eta,\sigma)+\xi+1\big)}\,\HG{n+\xi+1,\omega(\xi,\eta,\sigma)+\xi-\eta+1}{n+\omega(\xi,\eta,\sigma)+\xi+1}{k^{2}},\nonumber \\
\label{eq:hn-plus}
\end{eqnarray}
with $\omega(\xi,\eta,\sigma)$ being defined in (\ref{eq:b-x_y_s}).
Then, up to a constant multiplier, $(h_{n}^{(+)})_{n\geq0}$ is the
unique square summable solution of equation (\ref{eq:three-term-eq}).
Moreover,
\begin{equation}
h_{n}^{(+)}=(1-k^{2})^{-\omega(\xi,\eta,\sigma)-\xi+\eta-1}k^{n}n^{-\omega(\xi,\eta,\sigma)}\!\left(1+O\!\left(\frac{1}{n}\right)\!\right)\ \text{as}\ n\to\infty.\label{eq:hn-plus-asympt}
\end{equation}
\end{proposition}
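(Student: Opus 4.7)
My plan has three steps: (i) check that the two representations of $h_n^{(+)}$ in~(\ref{eq:hn-plus}) coincide---an immediate consequence of Euler's transformation $\,_{2}F_{1}(a,b;c;z)=(1-z)^{c-a-b}\,_{2}F_{1}(c-a,c-b;c;z)$ with $a=n+\eta$, $b=\omega$, $c=n+\omega+\xi+1$, which produces precisely the prefactor $(1-k^{2})^{\xi-\eta+1}$; (ii) show $h_n^{(+)}$ solves~(\ref{eq:three-term-eq}) by expressing it as a linear combination of the two independent solutions $h_n^{(I)},h_n^{(II)}$ supplied by Lemma~\ref{thm:hI-hII}; and (iii) derive the asymptotic~(\ref{eq:hn-plus-asympt}), from which square summability and uniqueness (via Proposition~\ref{thm:hn_l2}) follow.

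For step~(ii), I would apply the connection formula~(\ref{eq:2F1_1-z_z}) to each of $h_n^{(I)}$ and $h_n^{(II)}$. In both cases the parameter $c$ on the right-hand side of~(\ref{eq:2F1_1-z_z}) is forced to equal $n+\omega+\xi+1$; the first term in the $h_n^{(I)}$ decomposition is then $k^{n}\,_{2}F_{1}(n+\eta,\omega;n+\omega+\xi+1;k^{2})$, which matches the first representation of $h_n^{(+)}$, while the first term in the $h_n^{(II)}$ decomposition is $k^{n}\,_{2}F_{1}(n+\xi+1,\omega+\xi-\eta+1;n+\omega+\xi+1;k^{2})$ (multiplied by an $n$-dependent Gamma ratio), matching the second representation. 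The second term in each case takes the form $k^{-2(n+\omega+\xi)}\,_{2}F_{1}(\,\cdot\,;1-n-\omega-\xi;k^{2})$, and one Euler transformation shows that the two such second-term hypergeometrics are proportional. The reflection formula $\Gamma(z)\Gamma(1-z)=\pi/\sin(\pi z)$ collapses the relevant Gamma ratios (such as $\Gamma(-n-\omega-\xi)\Gamma(n+\omega+\xi+1)$, $\Gamma(-n-\xi)\Gamma(n+\xi+1)$ and $\Gamma(1-n-\eta)\Gamma(n+\eta)$) into $n$-independent sines, yielding
\[
h_{n}^{(I)}=C_{1}\,h_{n}^{(+)}+D_{1}\,\tilde{g}_{n},\qquad h_{n}^{(II)}=C_{2}\,h_{n}^{(+)}+D_{2}\,\tilde{g}_{n}
\]
with $n$-independent constants $C_i,D_i$ and a common sequence $\tilde{g}_n$ obtained after absorbing the residual $n$-dependent Gamma factors into the second-term hypergeometric. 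Since~(\ref{eq:h_wronsk}) guarantees $C_{1}D_{2}-C_{2}D_{1}\neq 0$, this system inverts to display $h_n^{(+)}$ as a linear combination of $h_n^{(I)}$ and $h_n^{(II)}$, proving that $h_n^{(+)}$ satisfies~(\ref{eq:three-term-eq}).

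For step~(iii), a termwise-limit argument in the convergent power series for the second representation in~(\ref{eq:hn-plus}) gives
\[
\,_{2}F_{1}(n+\xi+1,\omega+\xi-\eta+1;n+\omega+\xi+1;k^{2})=(1-k^{2})^{-(\omega+\xi-\eta+1)}\bigl(1+O(1/n)\bigr),
\]
since $(n+\xi+1)_{m}/(n+\omega+\xi+1)_{m}=1+O(1/n)$ for each fixed $m$ and the sum is dominated uniformly in $n$ by a convergent geometric-type series (as $k<1$). Combined with Stirling's formula $\Gamma(n+\xi+1)/\Gamma(n+\omega+\xi+1)=n^{-\omega}(1+O(1/n))$, this yields~(\ref{eq:hn-plus-asympt}); the factor $k^{n}$ then forces exponential decay and hence $(h_{n}^{(+)})\in\ell^{2}(\mathbb{N}_{0})$. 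Since~(\ref{eq:three-term-eq}) divided by $n$ meets the hypotheses of Proposition~\ref{thm:hn_l2}---the assumption $-\xi\notin\mathbb{N}$ ensuring $1+\xi/n\neq 0$---uniqueness of the square-summable solution up to a scalar multiplier follows.

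I expect the main obstacle to be the Gamma-function bookkeeping in step~(ii): verifying that all four coefficients $C_{i},D_{i}$ really simplify to $n$-independent constants, and that the two second-term hypergeometrics really define the same sequence $\tilde{g}_n$ up to a constant, requires combining the connection formula, Euler's transformation and the reflection formula in just the right order. The remainder of the argument is essentially mechanical.
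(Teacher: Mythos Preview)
Your plan is sound and would succeed, but in step~(ii) you are working harder than necessary. The paper applies the connection formula~(\ref{eq:2F1_1-z_z}) in the opposite direction: directly to the first representation of $h_n^{(+)}$, i.e.\ to $\,_{2}F_{1}(n+\eta,\omega;n+\omega+\xi+1;k^{2})$ with $k^{2}=1-(1-k^{2})$. Matching $a+b-c+1=n+\omega+\xi+1$ forces $c=\eta-\xi$, which is \emph{independent of $n$}; the two terms on the right of~(\ref{eq:2F1_1-z_z}) are then, up to explicit $n$-independent constants, precisely $h_n^{(I)}$ and $h_n^{(II)}$. One line gives
\[
(1-k^{2})^{\xi-\eta+1}h_{n}^{(+)}=\frac{\Gamma(1+\xi-\eta)}{\Gamma(1+\omega(\eta,\xi,\sigma))}\,h_{n}^{(I)}+\frac{(1-k^{2})^{1+\xi-\eta}\Gamma(\eta-\xi-1)}{\Gamma(\omega(\xi,\eta,\sigma))}\,h_{n}^{(II)},
\]
and no reflection-formula bookkeeping is needed at all. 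Your route---applying~(\ref{eq:2F1_1-z_z}) to $h_n^{(I)}$ and $h_n^{(II)}$ and then inverting---forces $c=n+\omega+\xi+1$ on the right-hand side, which is why you end up with $n$-dependent Gamma factors that must be collapsed via $\Gamma(z)\Gamma(1-z)=\pi/\sin(\pi z)$. It works, but the obstacle you anticipated is self-inflicted.

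One point you do not address: Lemma~\ref{thm:hI-hII} assumes $\xi-\eta\notin\mathbb{Z}$, so your step~(ii) only shows $h_n^{(+)}$ solves~(\ref{eq:three-term-eq}) under that extra restriction. The paper closes this gap by observing that $h_n^{(+)}$, as defined in~(\ref{eq:hn-plus}), depends continuously on $\xi,\eta$ for $-\xi\notin\mathbb{N}$, so the recurrence extends by continuity. You should add this remark. Your step~(iii) is essentially equivalent to the paper's (which uses Pfaff's transformation plus the large-$c$ asymptotic $\,_{2}F_{1}(a,b;c;z)=1+O(1/|c|)$, citing a reference to cover $k^{2}\geq 1/2$); your dominated-convergence argument is a legitimate alternative, though you should be a little more careful justifying the $O(1/n)$ rate rather than merely $o(1)$.
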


\begin{proof} The latter equation in (\ref{eq:hn-plus}) follows
from the familiar identity \cite[Eq.\,15.3.3]{AbramowitzStegun}
\[
\HG{a,b}{c}{z}=(1-z)^{c-a-b}\,\HG{c-a,c-b}{c}{z}.
\]
To show that $(h_{n}^{(+)})_{n\geq0}$ is the sought solution we can
make use of solutions $(h_{n}^{(I)})_{n\geq0}$ and $(h_{n}^{(II)})_{n\geq0}$
from Lemma~\ref{thm:hI-hII} which are well defined for all $n\in\mathbb{Z}$
provided $\xi,\xi-\eta\notin\mathbb{Z}$. Then a direct application
of (\ref{eq:2F1_1-z_z}) yields
\[
(1-k^{2})^{\xi-\eta+1}h_{n}^{(+)}=\frac{\Gamma(1+\xi-\eta)}{\Gamma\big(1+\omega(\eta,\xi,\sigma)\big)}\,h_{n}^{(I)}+\frac{(1-k^{2})^{1+\xi-\eta}\Gamma(\eta-\xi-1)}{\Gamma\big(\omega(\xi,\eta,\sigma)\big)}\,h_{n}^{(II)}.
\]
Hence, under these restrictions, $(h_{n}^{(+)})_{n\geq0}$ is also
a solution to (\ref{eq:three-term-eq}). But from (\ref{eq:hn-plus})
it is seen that if $-\xi\notin\mathbb{N}$ then $h_{n}^{(+)}$ is
defined for all $n\in\mathbb{N}_{0}$ and depends continuously on
$\xi$ and $\eta$. Hence the restriction on $\xi$ and $\eta$ can
be relaxed while keeping only the assumption $-\xi\notin\mathbb{N}$.
If so, $h_{n}^{(+)}$ satisfies (\ref{eq:three-term-eq}).

To get the asymptotic expansion we can use Eq.\,15.3.5 in \cite{AbramowitzStegun},
\[
\HG{a,b}{cz}{z}=(1-z)^{-b}\,\HG{b,c-a}{c}{\frac{z}{z-1}},
\]
and Eq.\,15.7.1 ibidem,
\[
\HG{a,b}{c}{z}=1+O\!\left(\frac{1}{|c|}\right)\ \text{as}\ |c|\to\infty,\ \text{with}\ a,b,z\ \text{fixed},
\]
to find that
\[
\HG{n+a,b}{n+c}{z}=(1-z)^{-b}\,\HG{b,c-a}{n+c}{\frac{z}{z-1}}\!=(1-z)^{-b}\!\left(1+O\!\left(\frac{1}{n}\right)\!\right)
\]
as $n\to\infty$. Strictly speaking, this reasoning is applicable
only for $|z|<1/2$ but the asymptotic expansion is known to be valid
also for $|z|<1$, see \cite{KhwajaDaalhuis}. Furthermore, by Stirling's
formula,
\[
\frac{\Gamma(n+a)}{\Gamma(n+b)}=n^{a-b}\!\left(1+O\!\left(\frac{1}{n}\right)\!\right)\ \text{as}\ n\to\infty.
\]
Equation (\ref{eq:hn-plus-asympt}) follows. \end{proof}

\section{General commuting Hankel matrix\label{sec:General-commuting-Hankel}}

Not all Jacobi matrices (\ref{eq:jacobi_mat}) with coefficients of
the form (\ref{eq:alpha-beta-gen}) admit a~nontrivial commuting
Hankel matrix. The goal of the current section is to explore all possible
cases within this class of Jacobi matrices when such a Hankel matrix
exists. The starting point is the following lemma which is proven
in~\cite[Lemma~3]{StampachStovicek_askey-hankel}.

\begin{lemma}\label{thm:Mzw} Let $p$ and $q$ be complex functions
which are meromorphic in a neighborhood of $\infty$ and assume that
the order of the pole at $\infty$ equals $2$ for both of them. Further
let $\epsilon\in\mathbb{C}$, $\epsilon\neq0$, and put, for $z,w\in\mathbb{C}$
sufficiently large,
\[
M(z,w):=\left(\begin{array}{cc}
p(z+\epsilon)-p(w-\epsilon) & q(z+\epsilon)-q(w-\epsilon)\\
p(z-\epsilon)-p(w+\epsilon) & q(z-\epsilon)-q(w+\epsilon)
\end{array}\right)\!.
\]
Let us write the determinant of $M(z,w)$ in the form
\[
\det M(z,w)=\left((z-w)^{2}-4\epsilon^{2}\right)\delta(z,w).
\]
If at least one of the functions $p(z)$ and $q(z)$ is not a polynomial
in $z$ of degree $2$ and the set of functions $\{1,p,q\}$ is linearly
independent, then one of the following two cases happens:

(i) for every $w\in\mathbb{C}$ sufficiently large there exists $\underset{z\to\infty}{\text{lim}}\delta(z,w)\in\mathbb{C}\backslash\{0\}$,

(ii) for every $w\in\mathbb{C}$ sufficiently large there exists $\underset{z\to\infty}{\text{lim}}z\delta(z,w)\in\mathbb{C}\backslash\{0\}$.\\
Consequently, for every $w\in\mathbb{C}$ sufficiently large there
exists $R(w)>0$ such that for all $z\in\mathbb{C}$, $\left|z\right|>R(w)$,
the matrix $M(z,w)$ is regular. \end{lemma}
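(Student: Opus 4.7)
The plan is to extract the $z\to\infty$ asymptotics of $\det M(z,w)$ by splitting $p$ and $q$ into their principal parts at infinity and holomorphic remainders. Write $p(z)=P(z)+\tilde p(z)$ and $q(z)=Q(z)+\tilde q(z)$, where $P(z)=a_{2}z^{2}+a_{1}z+a_{0}$ and $Q(z)=b_{2}z^{2}+b_{1}z+b_{0}$ (with $a_{2},b_{2}\neq 0$) are the polynomial parts of degree two, and $\tilde p,\tilde q$ are holomorphic at~$\infty$ with $\tilde p(\infty)=\tilde q(\infty)=0$. A preliminary observation is that if both $p$ and $q$ were polynomials of degree~2, then $\det M(z,w)$ would vanish identically: each entry in the top (resp.\ bottom) row of $M$ is then divisible by $z-w+2\epsilon$ (resp.\ $z-w-2\epsilon$), and after factoring these out the two columns become proportional. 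Using multilinearity of the determinant column-by-column to split $M$ into four pieces, the purely polynomial piece is therefore zero, the purely tilde piece is $O(1)$ as $z\to\infty$, and the two mixed pieces produce the leading behavior. A direct computation yields
\[
\det M(z,w)=z^{2}\Big(b_{2}\bigl[\tilde p(w+\epsilon)-\tilde p(w-\epsilon)\bigr]-a_{2}\bigl[\tilde q(w+\epsilon)-\tilde q(w-\epsilon)\bigr]\Big)+O(z)
\]
as $z\to\infty$ with $w$ fixed. Since $(z-w)^{2}-4\epsilon^{2}=z^{2}+O(z)$, dividing identifies the candidate limit $c(w):=\lim_{z\to\infty}\delta(z,w)$ with the coefficient in parentheses.

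If $c(w)\not\equiv 0$, then since $c$ is holomorphic at~$\infty$, its Laurent expansion in $1/w$ has a first nonzero term and $c(w)\neq 0$ for all sufficiently large $w$; this is case~(i). If instead $c(w)\equiv 0$, equivalently the function $g(w):=b_{2}\tilde p(w)-a_{2}\tilde q(w)$ satisfies $g(w+2\epsilon)=g(w)$ in a neighborhood of $\infty$, then matching coefficients of $1/w^{n}$ in this functional equation (using $\epsilon\neq 0$) forces $g\equiv 0$. Hence $\tilde q=(b_{2}/a_{2})\tilde p$, so $q=(b_{2}/a_{2})\,p+\alpha z+\beta$ for some constants $\alpha,\beta$. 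The linear independence of $\{1,p,q\}$ then forces $\alpha\neq 0$, and the assumption that at least one of $p,q$ is not a polynomial of degree~2 forces $\tilde p\not\equiv 0$.

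In this second situation I would perform the column operation $\text{Col}_{2}\mapsto\text{Col}_{2}-(b_{2}/a_{2})\text{Col}_{1}$, which reduces the second column of $M(z,w)$ to $(\alpha(z-w+2\epsilon),\alpha(z-w-2\epsilon))^{T}$. Expanding the resulting $2\times 2$ determinant and extracting the leading $z$-behavior from the remaining column (again using $p=P+\tilde p$) produces the identity
\[
\det M(z,w)=\alpha\,z\bigl[\tilde p(w+\epsilon)-\tilde p(w-\epsilon)\bigr]+O(1),
\]
so $z\,\delta(z,w)\to\alpha[\tilde p(w+\epsilon)-\tilde p(w-\epsilon)]$, and the same Laurent-coefficient argument applied to $\tilde p\not\equiv 0$ shows this limit is nonzero for all sufficiently large $w$; this is case~(ii). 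In both cases the dominant asymptotic term of $\det M(z,w)$ is nonzero, so taking $R(w)$ large enough for it to dominate the error gives $\det M(z,w)\neq 0$, i.e.\ $M(z,w)$ regular, whenever $|z|>R(w)$. The most delicate step, I expect, is the case-(ii) dichotomy: passing from shift-invariance $g(w+2\epsilon)=g(w)$ of a function vanishing at~$\infty$ to $g\equiv 0$, and then combining both hypotheses to secure $\alpha\neq 0$ together with $\tilde p\not\equiv 0$.
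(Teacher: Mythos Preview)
The paper does not actually prove this lemma; it merely states it and cites \cite[Lemma~3]{StampachStovicek_askey-hankel} for the proof. So there is no ``paper's own proof'' to compare against here.

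That said, your argument is correct and complete. The decomposition $p=P+\tilde p$, $q=Q+\tilde q$ into degree-two polynomial part plus holomorphic remainder vanishing at~$\infty$, combined with column-multilinearity of the determinant, cleanly isolates the leading $z^{2}$ coefficient $c(w)$. Your treatment of the degenerate case $c\equiv 0$ is the key step and is handled properly: the shift-invariance $g(w+2\epsilon)=g(w)$ of a function holomorphic at~$\infty$ and vanishing there forces $g\equiv 0$ (your Laurent-coefficient argument works; alternatively one can argue that a nonconstant periodic function cannot tend to a limit at~$\infty$), and then the two hypotheses of the lemma are used exactly where needed to secure $\alpha\neq 0$ and $\tilde p\not\equiv 0$. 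The column reduction in case~(ii) then reduces the problem to a $2\times 2$ determinant whose linear-in-$z$ leading term is transparent. The final regularity claim follows immediately since, in either case, $\det M(z,w)$ has a nonvanishing leading asymptotic term in~$z$ for each sufficiently large fixed~$w$.
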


Consider a semi-infinite Jacobi (tridiagonal) matrix $J$, indexed
by $m,n\in\mathbb{N}_{0}$, which is of the form (\ref{eq:jacobi_mat})
and is determined by the sequences $(\alpha_{n})$ and $(\beta_{n})$
given in (\ref{eq:alpha-beta-gen}), with $k\in(0,1)$, $a,b,c>-1$
and $\sigma\in\mathbb{R}$. Asymptotically we have
\begin{equation}
\alpha_{n}=-n^{2}-(\xi+2)n+A+O\!\left(\frac{1}{n}\right)\ \text{as}\ n\to\infty,\label{eq:alpha-asympt}
\end{equation}
where
\begin{equation}
\xi=\frac{a+b+c}{2}\,.\label{eq:xi}
\end{equation}
and
\begin{equation}
A=a^{2}+b^{2}+c^{2}-2ab-2ac-2bc-4a-4b-4c-8.\label{eq:A}
\end{equation}
Note that $\alpha_{-1}=0$.

Suppose $H$ is a Hankel matrix, $H_{m,n}=h_{m+n}$. Then $H$ and
$J$ commute if and only if it holds true
\begin{equation}
(\alpha_{n}-\alpha_{m})h_{n+m+1}+(\beta_{n}-\beta_{m})h_{n+m}+(\alpha_{n-1}-\alpha_{m-1})h_{n+m-1}=0,\label{eq:a_b_H}
\end{equation}
for all $m,n\geq0$. In particular, letting $m=0$ we have
\[
(\alpha_{n}-\alpha_{0})h_{n+1}+(\beta_{n}-\beta_{0})h_{n}+\alpha_{n-1}h_{n-1}=0\text{ }\ \text{for}\ \text{all}\ n\geq1.
\]
Taking into account the descending recurrence it is clear that, for
any $n\in\mathbb{N}_{0}$,
\[
\text{if}\ \text{ }h_{n}=h_{n+1}=0\text{ }\ \text{then}\ \text{ }h_{0}=h_{1}=\ldots=h_{n}=h_{n+1}=0.
\]

\begin{proposition} Let $\alpha_{n}$, $\beta_{n}$ be the coefficients
given in (\ref{eq:alpha-beta-gen}) and let $J$ be the associated
Jacobi matrix. If there exists a nonzero Hankel matrix commuting with
$J$ then $\alpha_{n}$ depends polynomially on $n$. \end{proposition}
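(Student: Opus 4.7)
The plan is to argue by contraposition: I assume $\alpha_n$ does \emph{not} depend polynomially on $n$ and show that then any Hankel matrix commuting with $J$ must be zero. Since $\alpha_n^{\,2}=(n+1)(n+a+1)(n+b+1)(n+c+1)=:P(n)$ is always a polynomial of degree $4$ in $n$, the failure of $\alpha_n$ to be polynomial means precisely that $P$ is not a perfect square in $\mathbb{C}[n]$. In that case, after choosing a branch, $\alpha(z)=-\sqrt{P(z)}$ extends to a single-valued meromorphic function in a deleted neighborhood of $\infty$ with pole of order $2$; in particular it is \emph{not} a polynomial of degree $2$. The same holds for the shift $\tilde{\alpha}(z):=\alpha(z-1)$. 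Since $\beta(z)=(k+k^{-1})z(z+\sigma)$ is itself a polynomial of degree $2$, the function sets $\{1,\alpha,\beta\}$ and $\{1,\tilde{\alpha},\beta\}$ are linearly independent, so the hypotheses of Lemma~\ref{thm:Mzw} are satisfied for either $(p,q)=(\alpha,\beta)$ or $(p,q)=(\tilde{\alpha},\beta)$.

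Next, I fix a large $s\in\mathbb{N}$ and exploit the commutation equation \eqref{eq:a_b_H} for various pairs $(n,m)$ with $n+m=s$: each such equation is a linear constraint on the triple $(h_{s-1},h_s,h_{s+1})$. Using two pairs $(n_1,m_1)=(z+\epsilon,w-\epsilon)$ and $(n_2,m_2)=(z-\epsilon,w+\epsilon)$ with $z+w=s$ and eliminating $h_s$ yields
\[
\det M_{\alpha,\beta}(z,w)\,h_{s+1}+\det M_{\tilde{\alpha},\beta}(z,w)\,h_{s-1}=0,
\]
where the $M_{p,q}(z,w)$ are precisely the matrices introduced in Lemma~\ref{thm:Mzw}. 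By that lemma, for $w$ fixed and $|z|$ sufficiently large both determinants are nonzero. Repeating the derivation with a different admissible choice of $(w,\epsilon)$ keeping the sum $s$ fixed (say $\epsilon=1$ with $w=1$ and then $w=2$, which uses the four pairs $(s,0),(s-2,2),(s-1,1),(s-3,3)$) produces a second linear relation between $h_{s-1}$ and $h_{s+1}$. The step I expect to be the main obstacle is verifying that, for all large $s$, these two relations are linearly independent; I would address this either by a direct leading-order expansion of the four $2\times 2$ determinants, or more conceptually by forming the $3\times 3$ system associated with three distinct pairs of sum $s$ and applying Lemma~\ref{thm:Mzw} to its $2\times 2$ minors to show that the rank equals $3$.

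Once two independent relations have been established, they force $h_{s-1}=h_{s+1}=0$ for every sufficiently large $s$. Any commutation equation with $n+m=s$ then collapses to $(\beta_n-\beta_m)h_s=0$, and since $\beta_n$ is eventually strictly monotone in $n$ one can choose $(n,m)$ with $\beta_n\neq\beta_m$, giving $h_s=0$. Hence $h_s=0$ for every $s$ beyond some threshold $s_0$. The descending three-term recurrence noted in the text just above the proposition (the commutation equation at $m=0$) now propagates these zeros downward, yielding $h_0=h_1=\cdots=0$. This contradicts the assumption $H\neq 0$, so $\alpha_n$ must be polynomial in $n$.
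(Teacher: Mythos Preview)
Your route is genuinely different from the paper's. The paper does \emph{not} try to force $h_s=0$ by showing that three commutation equations with the same sum $s$ are independent. Instead it accepts the rank-$2$ structure: from the pair of equations at $(n,m)$ and $(n-1,m+1)$ (with $m$ fixed, $n$ large) it extracts that $(h_{s+1},h_s,h_{s-1})$ is proportional to the vector of $2\times2$ minors $(\delta_1,\delta_2,\delta_3)$, hence $h_{n+1}=\psi(n)h_n$ with $\psi=\delta_1/\delta_2$ meromorphic at infinity. A trichotomy on the leading behavior of $\psi$ then pins down $h_n\sim k^n n^s$, and substituting this asymptotics back into \eqref{eq:a_b_H} forces $kB(n)+k^{-1}B(n-1)$ to vanish to all orders, where $B$ is the non-polynomial tail of $\alpha_n$; this gives $B\equiv 0$, the desired contradiction.

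Your proposal hinges precisely on the step you flag as the main obstacle, and neither remedy you suggest closes it. First, the specific choice $w=1,2$ is not admissible in Lemma~\ref{thm:Mzw}, which only guarantees regularity for $w$ sufficiently large; you would need two large values of $w$, and then the independence question is genuinely analytic, not covered by the lemma. Second, the ``leading-order expansion'' does not decide the issue: writing $\alpha_n=-n^2-(\xi+2)n+A+B(n)$ with $B(n)=O(1/n)$, the polynomial part of the $3\times3$ coefficient matrix (rows from $(n,m),(n-1,m+1),(n-2,m+2)$) has rank one, so its determinant vanishes to leading order. A short computation shows that the full determinant equals $v_2\det[e_1,u,e_3]$ with $v_2=(k+k^{-1})(s+\sigma)$, $u=(n-m,n-m-2,n-m-4)^T$, and $e_1,e_3$ built from differences of $B$; the dominant contribution then reduces to $(n-m)\big[(\Delta B)_1^2-(\Delta B)_0(\Delta B)_2\big]$ with $(\Delta B)_j=B(m+j)-B(m+j-1)$, and this quantity is \emph{not} controlled by Lemma~\ref{thm:Mzw} at all --- it requires a separate argument about the tail $B$ (and is moreover sensitive to which leading coefficient of $B$ is the first nonzero one). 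None of this is carried out. Finally, your ``more conceptual'' alternative of applying Lemma~\ref{thm:Mzw} to the $2\times2$ minors cannot give rank $3$: the lemma ensures each minor is individually nonzero for large arguments, which only certifies rank $\ge 2$; rank $3$ is equivalent to nonvanishing of the $3\times3$ determinant, which the lemma does not address.

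In short, the strategy is plausible but incomplete at its decisive point, whereas the paper's asymptotic argument sidesteps the rank-$3$ question entirely.
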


\begin{proof} We will proceed by contradiction. Let us assume that
$\alpha_{n}$ is not a polynomial in $n$ and a nontrivial solution
$(h_{n})_{n\geq0}$ to (\ref{eq:a_b_H}) does exist. Without loss
of generality we can assume that $h_{n}$ is real for all $n$. We
will make use of the fact that $\alpha_{n}$ may be regarded as an
analytic functions in $n$ for $n$ sufficiently large. Of course,
we can make use as well of the fact that $\beta_{n}$ is a polynomial
in $n$. 

Along with (\ref{eq:a_b_H}) we will consider the equation
\begin{equation}
(\alpha_{n-1}-\alpha_{m+1})h_{n+m+1}+(\beta_{n-1}-\beta_{m+1})h_{n+m}+(\alpha_{n-2}-\alpha_{m})h_{n+m-1}=0.\label{eq:Hprime}
\end{equation}
From the asymptotic behavior, as $n\to\infty$,
\[
\alpha_{n}=-n^{2}-(\xi+2)n+O(1),\ \alpha_{n-1}=-n^{2}-\xi n+O(1),
\]
it is obvious that $\{\alpha_{n},\alpha_{n-1},1\}$ is linearly independent
as a set of functions in $n$. Clearly, the same is true for both
$\{\alpha_{n},\beta_{n},1\}$ and $\{\beta_{n},\alpha_{n-1},1\}$
since $\beta_{n}$ is a polynomial in $n$.

Let
\begin{eqnarray*}
\delta_{1}(n,m) & := & \det\!\left(\begin{array}{cc}
\beta_{n}-\beta_{m} & \alpha_{n-1}-\alpha_{m-1}\\
\beta_{n-1}-\beta_{m+1} & \alpha_{n-2}-\alpha_{m}
\end{array}\right)\!,\\
\delta_{2}(n,m) & := & -\det\!\left(\begin{array}{cc}
\alpha_{n}-\alpha_{m} & \alpha_{n-1}-\alpha_{m-1}\\
\alpha_{n-1}-\alpha_{m+1} & \alpha_{n-2}-\alpha_{m}
\end{array}\right)\!,\\
\delta_{3}(n,m) & := & \det\!\left(\begin{array}{cc}
\alpha_{n}-\alpha_{m} & \beta_{n}-\beta_{m}\\
\alpha_{n-1}-\alpha_{m+1} & \beta_{n-1}-\beta_{m+1}
\end{array}\right)\!.
\end{eqnarray*}
According to Lemma~\ref{thm:Mzw}, for all $m$ sufficiently large
there exists $R_{m}\in\mathbb{N}$ such that for all $n\geq R_{m}$,
$\delta_{j}(n,m)\neq0$ for j=1,2,3. Then, by equations (\ref{eq:a_b_H})
and (\ref{eq:Hprime}), the vectors
\[
(h_{n+m+1},h_{n+m},h_{n+m-1})\ \ \text{and}\ \ \big(\delta_{1}(n,m),\delta_{2}(n,m),\delta_{3}(n,m)\big)
\]
are linearly dependent.

Fix sufficiently large $m\in\mathbb{N}_{0}$. Then for all $n\in\mathbb{N}_{0}$,
$n\geq m_{0}:=R_{m}+m$, we have
\[
h_{n+1}=\psi(n)h_{n},\,\ \text{with}\ \text{ }\psi(n):=\frac{\delta_{1}(n-m,m)}{\delta_{2}(n-m,m)}\,.
\]
It is of importance that $\psi(n)$ can be regarded as a meromorphic
function of $n$ in a neighborhood of $\infty$. Particularly, $\psi(n)$
has an asymptotic expansion to all orders as $n\to\infty$.

In view of Lemma~\ref{thm:Mzw} there are only three possible types
of asymptotic behavior of $\psi(n)$ as $n\to0$:
\begin{eqnarray*}
(\text{I}) &  & \psi(n)=\lambda_{1}\!\left(1+O\!\left(\frac{1}{n}\right)\!\right)\!,\\
(\text{II}) &  & \psi(n)=\lambda_{2}\,n\left(1+O\!\left(\frac{1}{n}\right)\!\right)\!,\\
(\text{III}) &  & \psi(n)=\frac{\lambda_{3}}{n}\!\left(1+O\!\left(\frac{1}{n}\right)\!\right)\!,
\end{eqnarray*}
Note that in any case $\lambda_{j}\neq0$. From here one can deduce
the asymptotic behavior of
\[
h_{n}=h_{m_{0}}\,\prod_{k=m_{0}}^{n-1}\psi(k).
\]
In case (I) we have
\[
h_{n}=c_{1}\lambda_{1}^{\,n}\,n^{s_{1}}\left(1+O\!\left(\frac{1}{n}\right)\!\right)\text{ }\text{as}\ n\to\infty,
\]
for some $c_{1},s_{1}\in\mathbb{R}$, $c_{1}\neq0$. In case (II)
we have
\[
h_{n}=c_{2}\lambda_{2}^{\,n}\,n!\,n^{s_{2}}\left(1+O\!\left(\frac{1}{n}\right)\!\right)\text{ }\text{as}\ n\to\infty,
\]
for some $c_{2},s_{2}\in\mathbb{R}$, $c_{2}\neq0$. In case (III)
we have
\[
h_{n}=\frac{c_{3}\lambda_{3}^{\,n}\,n^{s_{3}}}{n!}\left(1+O\!\left(\frac{1}{n}\right)\!\right)\text{ }\text{as}\ n\to\infty,
\]
for some $c_{3},s_{3}\in\mathbb{R}$, $c_{3}\neq0$.

Rewriting (\ref{eq:a_b_H}) and taking into the account the asymptotic
behavior of $\alpha_{n}$ we obtain
\begin{eqnarray}
 &  & \hskip-4emh_{n+1}+\frac{\beta_{n-m}-\beta_{m}}{\alpha_{n-m}-\alpha_{m}}\,h_{n}+\frac{\alpha_{n-m-1}-\alpha_{m-1}}{\alpha_{n-m}-\alpha_{m}}\,h_{n-1}\nonumber \\
 &  & \hskip-4em=\,h_{n+1}-(k+k^{-1})\!\left(1+O\!\left(\frac{1}{n}\right)\!\right)\!h_{n}+\!\left(1+O\!\left(\frac{1}{n}\right)\!\right)\!h_{n-1}=0.\label{eq:H_eq_asympt}
\end{eqnarray}
It is readily seen that the asymptotic behavior of $h_{n}$ of type
(II) and (III) is incompatible with (\ref{eq:H_eq_asympt}). Hence
the only admissible asymptotic behavior of $h_{n}$ is that of type
(I). Without loss of generality we can suppose that $c_{1}=1$. Moreover,
from (\ref{eq:H_eq_asympt}) it is also seen that $\lambda_{1}$ should
solve the equation $\lambda_{1}^{\,2}-(k+k^{-1})\lambda_{1}+1=0$
whence $\lambda_{1}=k$ or $\lambda_{1}=k^{-1}$. For definiteness
let us assume that $\lambda_{1}=k$. The case $\lambda_{1}=k^{-1}$
can be treated analogously. Furthermore, for the sake of simplicity
we will drop the index in $s_{1}$. Thus we obtain
\begin{equation}
h_{n}=k^{n}(n+1)^{s}\varphi(n)\label{eq:H_subst}
\end{equation}
where
\begin{equation}
\varphi(n)=1+\frac{\varphi_{1}}{n}+O\!\left(\frac{1}{n^{2}}\right)\ \ \text{as}\ n\to\infty,\label{eq:phi_asympt}
\end{equation}
with some (undetermined) coefficient $\varphi_{1}$. But in fact,
$\varphi(n)$ has an asymptotic expansion to all orders as $n\to\infty$.

Plugging (\ref{eq:H_subst}) into (\ref{eq:a_b_H}) we obtain
\begin{eqnarray}
 &  & \hskip-3.5em(\alpha_{n}-\alpha_{m})k\!\left(1+\frac{1}{n+m+1}\right)^{\!s}\varphi(n+m+1)+(\beta_{n}-\beta_{m})\varphi(n+m)\nonumber \\
 &  & \hskip5.5em+\,(\alpha_{n-1}-\alpha_{m-1})k^{-1}\!\left(1-\frac{1}{n+m+1}\right)^{\!s}\varphi(n+m-1)=0.\label{eq:H_phi}
\end{eqnarray}

The asymptotic expansion of the LHS of (\ref{eq:H_phi}) as $n\to\infty$,
with $m$ being fixed but otherwise arbitrary, while taking into account
(\ref{eq:phi_asympt}) and (\ref{eq:xi}), yields the expression
\begin{eqnarray*}
 &  & \hskip-2em\frac{(1-k^{2})s-(1+k^{2})(\xi-\sigma)-2k^{2}}{k}\,n+V(k,a,b,c,\sigma,s,\varphi_{1})+(k-k^{-1})sm\\
 &  & \hskip16em-\,\beta_{m}-k\alpha_{m}-k^{-1}\alpha_{m-1}+O\!\left(\frac{1}{n}\right)
\end{eqnarray*}
where $V(k,a,b,c,\sigma,s)$ is a function of the indicated variables
but independent of $m$ and $n$. Necessarily,
\[
s=\frac{(1+k^{2})(\xi-\sigma)+2k^{2}}{1-k^{2}}\,.
\]
Furthermore,
\begin{equation}
\beta_{m}+k\alpha_{m}+k^{-1}\alpha_{m-1}=(k-k^{-1})sm+V(k,a,b,c,\sigma,s,\varphi_{1}),\ \forall m\geq0.\label{eq:beta-alpha-alpha}
\end{equation}

The asymptotic expansion (\ref{eq:alpha-asympt}) can be made more
precise. For $n$ large we have
\[
\alpha_{n}=-n^{2}-(\xi+2)n+A+B(n)
\]
where $A$ is a constant given in (\ref{eq:A}) and
\begin{equation}
B(z)=\sum_{j=1}^{\infty}\frac{b_{j}}{z^{j}}\label{eq:B-z}
\end{equation}
is an analytic function in a neighborhood of $\infty$. Since $\beta_{n}$
is a polynomial in $n$, from (\ref{eq:beta-alpha-alpha}) it is seen
that the coefficients in the asymptotic expansion of
\[
kB(n)+k^{-1}B(n-1),\ \,\text{as}\ n\to\infty,
\]
vanish to all orders. Referring to (\ref{eq:B-z}), from here one
straightforwardly deduces by mathematical induction in $\ell$ that
$b_{j}=0$ for $1\leq j\leq\ell-1$ and all $\ell\geq1$. Whence $B(z)=0$.
In fact, if $b_{j}=0$ for $1\leq j\leq\ell-1$ and some $\ell\in\mathbb{N}$
then
\[
b_{\ell}=\lim_{n\to\infty}n^{\ell}B(n)=\lim_{n\to\infty}(n-1)^{\ell}B(n-1)=\lim_{n\to\infty}n^{\ell}B(n-1)
\]
whence $(k+k^{-1})b_{\ell}=0$.

Thus we conclude that $\alpha_{n}$ is a polynomial in $n$, a contradiction.
\end{proof}

From now on we shall focus on the case when $\alpha_{n}$ is a polynomial
in $n$ while $\beta_{n}$ is the same as in (\ref{eq:alpha-beta-gen}).
Hence
\begin{equation}
\alpha_{n}=-(n+1)(n+a+1),\ \beta_{n}=(k+k^{-1})n\,(n+\sigma).\label{eq:alpha-poly}
\end{equation}
and $\xi$ in (\ref{eq:xi}) simplifies to $\xi=a$\@. Furthermore,
equation (\ref{eq:a_b_H}) reduces to
\begin{equation}
(k+k^{-1})(n+\sigma)h_{n}-(n+a)h_{n-1}-(n+a+2)h_{n+1}=0,\ \,n\geq1.\label{eq:Hn-reduced}
\end{equation}
Referring to Propositions~\ref{thm:hn_l2} and \ref{thm:hn-plus}
we have the following result.

\begin{theorem}\label{thm:sol-H-J} Let $\alpha_{n}$ and $\beta_{n}$
be given by (\ref{eq:alpha-poly}). Denote by $J$ the respective
Jacobi matrix (\ref{eq:jacobi_mat}). Then, up to a constant multiplier,
the only square summable solution to (\ref{eq:Hn-reduced}) reads
\begin{equation}
h_{n}=\frac{k^{n}\Gamma(n+a+1)}{\Gamma\big(n+a+\omega(a,\sigma)+1\big)}\,\HG{n+a+1,\omega(a,\sigma)-1}{n+a+\omega(a,\sigma)+1}{k^{2}},\ n\geq0,\label{eq:sol-H-2F1}
\end{equation}
where
\[
\omega(a,\sigma):=\frac{-2k^{2}+(1+k^{2})(\sigma-a)}{1-k^{2}}\,.
\]
The solution also admits an integral representation,
\begin{equation}
h_{n}=\frac{k^{n}}{\Gamma\big(\omega(a,\sigma)\big)}\,\int_{0}^{1}t^{n+a}\left(\frac{1-t}{1-k^{2}t}\right)^{\!\omega(a,\sigma)-1}\text{d}t.\label{eq:sol-H-int}
\end{equation}
The asymptotic behavior of the solution is as follows,
\[
h_{n}=(1-k^{2})^{-\omega(a,\sigma)+1}k^{n}n^{-\omega(a,\sigma)}\!\left(1+O\!\left(\frac{1}{n}\right)\!\right)\ \text{as}\ n\to\infty.
\]
Then, again up to a constant multiplier, $H_{m,n}=h_{m+n}$, $m,n\in\mathbb{N}_{0}$,
is the only Hankel matrix commuting with $J$ with square summable
columns. Therefore, the matrix $H$ defines an operator on $\mathbb{\ell}^{2}(\mathbb{N}_{0})$
whose domain is the linear hull of the canonical basis. \end{theorem}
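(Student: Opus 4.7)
The plan is to exploit the polynomial form of $\alpha_n$ and $\beta_n$ to reduce the two-index commutation relation~(\ref{eq:a_b_H}) to the single-index recurrence~(\ref{eq:Hn-reduced}), after which every assertion will follow from the machinery developed in Section~\ref{sec:A-three-term-recurrence}.

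First, I would perform the reduction. A direct computation using~(\ref{eq:alpha-poly}) shows that each of the three differences $\alpha_n-\alpha_m$, $\beta_n-\beta_m$, $\alpha_{n-1}-\alpha_{m-1}$ factors as $(m-n)$ times a polynomial in $m+n$ alone. After dividing~(\ref{eq:a_b_H}) by $m-n$ (the diagonal $m=n$ being automatic), the surviving identity depends only on $N:=m+n$ and coincides with~(\ref{eq:Hn-reduced}) written in the variable $N$. Thus, for any sequence $(h_n)_{n\geq 0}$, the Hankel matrix $H_{m,n}=h_{m+n}$ commutes with $J$ if and only if $(h_n)$ solves~(\ref{eq:Hn-reduced}) for all $n\geq 1$; this is the algebraic heart of the argument and works specifically because $\alpha_n$ and $\beta_n$ are quadratic in $n$.

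Next I would identify the unique square summable solution. Dividing~(\ref{eq:Hn-reduced}) by $n$ brings it to the canonical form~(\ref{eq:threetermeq-gen}) with $s_n=\sigma/n$, $x_n=a/n$, $y_n=(a+2)/n$, all tending to zero, while $1+x_n\neq 0$ is guaranteed by $a>-1$. Proposition~\ref{thm:hn_l2} therefore yields existence and uniqueness up to a constant of a square summable solution. To obtain the closed form~(\ref{eq:sol-H-2F1}) I would specialize Proposition~\ref{thm:hn-plus} to $\xi=a$, $\eta=a+2$; with these choices $\omega(\xi,\eta,\sigma)$ collapses to the $\omega(a,\sigma)$ of the statement, and the second expression in~(\ref{eq:hn-plus}) matches~(\ref{eq:sol-H-2F1}) after cancellation. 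The asymptotic formula is then a direct transcription of~(\ref{eq:hn-plus-asympt}), while the integral representation~(\ref{eq:sol-H-int}) follows from the Euler integral~(\ref{eq:gauss_hyp_int_repre}) applied to the hypergeometric function in~(\ref{eq:sol-H-2F1}), with the roles of $b$ and $c-b$ chosen so that the Gamma prefactors cancel and the integrand simplifies to $t^{n+a}\bigl((1-t)/(1-k^{2}t)\bigr)^{\omega(a,\sigma)-1}$; one first restricts to a range of parameters in which the integral converges absolutely and extends by analytic continuation.

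The remaining assertions are now easy. Since $(h_n)\in\ell^{2}$, every column of $H$, being a tail of that sequence, is in $\ell^{2}$; in particular $H$ sends every canonical basis vector into $\ell^{2}$, so it is well defined on the algebraic span of the basis. Uniqueness of $H$ up to a constant among commuting Hankel matrices with square summable columns follows from the reduction in step one combined with Proposition~\ref{thm:hn_l2} applied to the (square summable) first column of any candidate. I do not foresee any genuinely hard step; the only real insight is the two-to-one reduction of the commutation equation, after which one merely invokes the tools of Section~\ref{sec:A-three-term-recurrence} and the Euler integral.
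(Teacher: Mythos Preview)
Your proposal is correct and follows essentially the same route as the paper: the reduction of the two-index commutation relation~(\ref{eq:a_b_H}) to the single recurrence~(\ref{eq:Hn-reduced}) is carried out in the text preceding the theorem, and the theorem itself is stated as a direct consequence of Propositions~\ref{thm:hn_l2} and~\ref{thm:hn-plus}. Your explicit factorization $\alpha_n-\alpha_m=(m-n)(m+n+a+2)$, etc., makes the equivalence (rather than mere implication) between (\ref{eq:a_b_H}) and (\ref{eq:Hn-reduced}) transparent, and your derivation of the integral form~(\ref{eq:sol-H-int}) via the Euler integral with $b_{\mathrm{hyp}}=n+a+1$ fills in a detail the paper leaves to the reader; otherwise the arguments coincide.
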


\section{The main theorem\label{sec:The-main-theorem}}

Let us introduce four Hankel matrices $H^{(p)}$, $H^{(q)}$, $H^{(r)}$,
$H^{(s)}$, depending on a parameter $k\in(0,1)$,
\begin{equation}
H_{m,n}^{(j)}:=h_{m+n}^{(j)},\ \,\text{for}\ m,n\in\mathbb{N}_{0},\,j=p,q,r,s,\label{eq:Hankel-H-h}
\end{equation}
where
\begin{eqnarray}
\hskip-2emh_{n}^{(p)} & := & \frac{k^{n}\Gamma(n+1/2)}{(n+1)!}\,\HG{n+1/2,1/2}{n+2}{k^{2}}=\,\frac{4k^{n}}{\sqrt{\pi}}\int_{0}^{1}t^{2n}\sqrt{\frac{1-t^{2}}{1-k^{2}t^{2}}}\,\text{d}t,\label{eq:def_hank3}\\
\hskip-2emh_{n}^{(q)} & := & \frac{k^{n}\Gamma(n+3/2)}{(n+1)!}\,\HG{n+3/2,-1/2}{n+2}{k^{2}}=\,\frac{2k^{n}}{\sqrt{\pi}}\int_{0}^{1}t^{2n+2}\sqrt{\frac{1-k^{2}t^{2}}{1-t^{2}}}\,\text{d}t,\label{eq:def_hank4}\\
\hskip-2emh_{n}^{(r)} & := & \frac{k^{n}\Gamma(n+1/2)}{n!}\,\HG{n+1/2,-1/2}{n+1}{k^{2}}=\,\frac{2k^{n}}{\sqrt{\pi}}\int_{0}^{1}t^{2n}\sqrt{\frac{1-k^{2}t^{2}}{1-t^{2}}}\,\text{d}t,\label{eq:def_hank5}\\
\hskip-2emh_{n}^{(s)} & := & \frac{k^{n}\Gamma(n+3/2)}{(n+2)!}\,\HG{n+3/2,1/2}{n+3}{k^{2}}=\,\frac{4k^{n}}{\sqrt{\pi}}\int_{0}^{1}t^{2n+2}\sqrt{\frac{1-t^{2}}{1-k^{2}t^{2}}}\,\text{d}t.\label{eq:def_hank6}
\end{eqnarray}
Note that the latter equality in each row of this array of equations
follows from the integral representation (\ref{eq:gauss_hyp_int_repre}).

Recall definitions of the Stieltjes-Carlitz polynomials $p_{n}(x)$,
$q_{n}(x)$, $r_{n}(x)$ and $s_{n}(x)$ in (\ref{eq:recur_carlitz3}),
(\ref{eq:recur_carlitz4}), (\ref{eq:recur_carlitz5}) and (\ref{eq:recur_carlitz6}),
respectively.

\begin{theorem}\label{thm:hank3456} Each of the Hankel matrices
$H^{(j)}$, $j=p,q,r,s$, represents a positive trace class operator
on $\ell^{2}(\mathbb{N}_{0})$ with simple eigenvalues. The eigenvalues
of $H^{(j)}$,\linebreak{}
$j=p,q,r,s$, if enumerated in descending order, are respectively
\begin{eqnarray*}
{\displaystyle \nu_{m}^{(p)}} & = & \frac{4\sqrt{\pi}}{k}\,\frac{q^{m+1/2}}{1+q^{2m+1}}\,,\ m\geq0,\\
{\displaystyle \nu_{m}^{(q)}} & = & \frac{2\sqrt{\pi}}{k}\,\frac{q^{m+1/2}}{1+q^{2m+1}}\,,\ m\geq0,\\
{\displaystyle \nu_{m}^{(r)}} & = & 2\sqrt{\pi}\,\frac{q^{m}}{1+q^{2m}}\,,\ m\geq0,\\
{\displaystyle \nu_{m}^{(s)}} & = & \frac{4\sqrt{\pi}}{k^{2}}\,\frac{q^{m}}{1+q^{2m}}\,,\ m\geq1.
\end{eqnarray*}
Eigenvector $\Psi_{m}^{(j)}$ corresponding to $\nu_{m}^{(j)}$, $j=p,q,r,s$,
can be chosen with the entries
\begin{eqnarray*}
\left(\Psi_{m}^{(p)}\right)_{n} & := & \frac{(-1)^{n}}{k^{n}(2n)!}\,p_{n}\!\left(\frac{\pi^{2}(2m+1)^{2}}{4K^{2}}\right)\!,\\
\left(\Psi_{m}^{(q)}\right)_{n} & := & \frac{(-1)^{n}}{k^{n}(2n+1)!}\,q_{n}\!\left(\frac{\pi^{2}(2m+1)^{2}}{4K^{2}}\right)\!,\\
\left(\Psi_{m}^{(r)}\right)_{n} & := & \frac{(-1)^{n}}{k^{n}(2n)!}\,r_{n}\!\left(\frac{\pi^{2}m^{2}}{K^{2}}\right)\!,\\
\left(\Psi_{m}^{(s)}\right)_{n} & := & \frac{(-1)^{n}}{k^{n}(2n+1)!}\,s_{n}\!\left(\frac{\pi^{2}m^{2}}{K^{2}}\right)\!,
\end{eqnarray*}
with $n\in\mathbb{N}_{0}$. The $\ell^{2}$-norms of the eigenvectors
equal
\begin{eqnarray*}
\|\Psi_{m}^{(p)}\|^{2} & = & \frac{kK}{2\pi}\,\frac{1+q^{2m+1}}{q^{m+1/2}}\,,\\
\|\Psi_{m}^{(q)}\|^{2} & = & \frac{2kK^{3}}{\pi^{3}}\,\frac{1+q^{2m+1}}{(2m+1)^{2}q^{m+1/2}}\,,\\
\|\Psi_{m}^{(r)}\|^{2} & = & \frac{K}{2\pi}\,\frac{1+q^{2m}}{q^{m}}\,,\\
\|\Psi_{m}^{(s)}\|^{2} & = & \frac{k^{2}K^{3}}{2\pi^{3}}\,\frac{1+q^{2m}}{m^{2}q^{m}}\,.
\end{eqnarray*}
\end{theorem}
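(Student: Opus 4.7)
My plan is to combine Theorem~\ref{thm:sol-H-J}, which pins down the unique Hankel matrix commuting with a Jacobi matrix of the form~(\ref{eq:alpha-poly}), with the sum identities of Proposition~\ref{thm:sum-SCpoly-3456} and the Fourier expansions of $\cn$ and $\dn$ from Subsection~\ref{subsec:jacobi_elliptic}. The first step is to recognize each sequence $h_n^{(j)}$ from (\ref{eq:def_hank3})--(\ref{eq:def_hank6}) as an instance of the closed form~(\ref{eq:sol-H-2F1}) by matching the parameters of the hypergeometric function. A routine identification shows that this is possible for a unique pair $(a,\sigma)$ depending on $j$; explicitly, $a=-1/2$ for $j\in\{p,r\}$ and $a=1/2$ for $j\in\{q,s\}$, with $\sigma$ a rational function of $k^{2}$. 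Theorem~\ref{thm:sol-H-J} then guarantees that $H^{(j)}$ commutes with the corresponding Jacobi matrix $J$ of the form~(\ref{eq:alpha-poly}).

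Next I would verify that this $J$ is, up to an overall scaling by $4k$ and a shift of the diagonal, similar via the diagonal rescaling $e_n\mapsto(-1)^nk^{-n}/(2n)!$ (for $j\in\{p,r\}$) or $e_n\mapsto(-1)^nk^{-n}/(2n+1)!$ (for $j\in\{q,s\}$) to the Jacobi matrix whose monic orthogonal polynomials are the corresponding Stieltjes-Carlitz family; this is a direct comparison of (\ref{eq:alpha-poly}) with (\ref{eq:recur_carlitz3})--(\ref{eq:recur_carlitz6}). It follows in particular that the eigenvectors of $J$ in $\ell^{2}(\mathbb{N}_{0})$ are exactly the vectors $\Psi_m^{(j)}$ of the theorem, obtained from the rescaled orthogonal polynomials evaluated at the points $\lambda_m$ listed in Subsection~\ref{subsec:Stieltjes-Carlitz-polynomials}; in the determinate case they form an orthogonal basis of $\ell^{2}(\mathbb{N}_{0})$. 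By the commutation, each $\Psi_m^{(j)}$ is thus also an eigenvector of $H^{(j)}$.

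The eigenvalue $\nu_m^{(j)}$ is then evaluated by computing the zeroth coordinate $(H^{(j)}\Psi_m^{(j)})_{0}$. Since $(\Psi_m^{(j)})_{0}=1$, this equals $\sum_n h_n^{(j)}(\Psi_m^{(j)})_n$, and rewriting $h_n^{(j)}$ in terms of $E_n(k)$ or $F_n(k)$ from (\ref{eq:def_hank3})--(\ref{eq:def_hank6}) collapses the series, by Proposition~\ref{thm:sum-SCpoly-3456}, to either $\int_{0}^{K}\cos(\sqrt{\lambda_m}u)\cn(u)\,\mathrm{d}u$ or $\int_{0}^{K}\cos(\sqrt{\lambda_m}u)\dn(u)\,\mathrm{d}u$ (with an additional $\sin(\sqrt{\lambda_m}K)/\sqrt{\lambda_m}$ term in the $s$-case). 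Substituting the Fourier series (\ref{eq:four_cn}) or (\ref{eq:four_dn}) and invoking orthogonality of the cosine system on $[0,K]$ evaluates each integral in closed form and produces the claimed $\nu_m^{(j)}$. The residual sine term in~(\ref{eq:h6_sum_form}) vanishes precisely when $\sqrt{\lambda_m}K\in\pi\mathbb{Z}$, which accounts for the omission of $m=0$ from the spectrum of $H^{(s)}$---a condition consistent with the fact that the formally constructed $\Psi_0^{(s)}$ fails to lie in $\ell^{2}(\mathbb{N}_{0})$. The norm $\|\Psi_m^{(j)}\|^{2}$ is then obtained from Parseval's identity applied to the atomic measure of orthogonality: $\sum_n|P_n(\lambda_m)|^{2}/\|P_n\|_\mu^{2}$ equals $1/\mu(\{\lambda_m\})$, read off directly from (\ref{eq:mu_carlitz34}) or (\ref{eq:mu_carlitz56}).

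Simplicity of the spectrum is immediate from the strict monotonicity of $t\mapsto q^t/(1+q^{2t})$ on $[0,\infty)$ for $q\in(0,1)$, so all $\nu_m^{(j)}$ are pairwise distinct; positivity and the trace class property then follow from completeness of the eigenbasis $\{\Psi_m^{(j)}\}$ together with the exponential decay of $\nu_m^{(j)}$ as $m\to\infty$. The technically most delicate step in the plan is the second one, where one must track consistently the scale factor $4k$, the constant shift of the diagonal, and the diagonal rescaling so that the spectral data of $J$ are correctly matched with those of the Stieltjes-Carlitz Jacobi operator; once this bookkeeping is completed, the remainder reduces almost mechanically to Proposition~\ref{thm:sum-SCpoly-3456} and the elliptic-function Fourier expansions of Subsection~\ref{subsec:jacobi_elliptic}.
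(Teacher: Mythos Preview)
Your plan is essentially the same as the paper's own proof: identify each $h_n^{(j)}$ with~(\ref{eq:sol-H-2F1}), invoke Theorem~\ref{thm:sol-H-J} to obtain commutation with a Jacobi matrix of Stieltjes--Carlitz type, diagonalize via the orthonormal polynomials, and compute the eigenvalues through Proposition~\ref{thm:sum-SCpoly-3456} and the Fourier series~(\ref{eq:four_cn}),~(\ref{eq:four_dn}).

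There is one ordering issue worth fixing. You defer the trace class property to the very end, deducing it from the exponential decay of the $\nu_m^{(j)}$ and completeness of the eigenbasis. But the step ``by the commutation, each $\Psi_m^{(j)}$ is also an eigenvector of $H^{(j)}$'' already presupposes that $H^{(j)}$ is a bounded operator on $\ell^2(\mathbb{N}_0)$: formal commutation of semi-infinite matrices does not by itself imply that $H^{(j)}$ preserves $\Dom J^{(j)}$, nor that $H^{(j)}\Psi_m^{(j)}$ is a well-defined $\ell^2$-vector. The paper handles this cleanly by first showing $\sum_{m,n}h_{m+n}^{(j)}<\infty$ directly from the integral representation~(\ref{eq:sol-H-int}) (all entries are positive and the double sum is a convergent integral), so that $H^{(j)}$ is trace class \emph{a priori}; only then does the commutation argument go through. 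You should insert this estimate before invoking the common eigenvectors, rather than after.

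Two smaller remarks. Your explanation for excluding $m=0$ in the $s$-case is correct but can be said more simply: $\lambda_0=0$ is not in the support of the orthogonality measure $x\,\mathrm{d}\mu(x)$ for the family $(s_n)$, so it is not an eigenvalue of $J^{(s)}$. And your monotonicity argument for simplicity of the $\nu_m^{(j)}$ is a useful addition; the paper relies implicitly on the explicit formulas for this.
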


\begin{proof} Let us first summarize some general features which
are applicable in each of the four cases. As is explicitly indicated
below, each sequence $(h_{n}^{(j)})$, $j=p,q,r,s$, coincides with
a solution $h_{n}$ given in (\ref{eq:sol-H-2F1}) for a particular
choice of parameters $a>-1$ and $\sigma>0$, and in each case we
have $\omega(a,\sigma)>0$. From the integral representation (\ref{eq:sol-H-int})
it is then clear that $h_{n}^{(j)}>0$ for $n\geq0$ and
\[
\sum_{m,n=0}^{\infty}h_{m+n}^{(j)}=\frac{1}{\Gamma\big(\omega(a,\sigma)\big)}\,\int_{0}^{1}\frac{t^{a}}{(1-kt)^{2}}\left(\frac{1-t}{1-k^{2}t}\right)^{\!\omega(a,\sigma)-1}\text{d}t<\infty.
\]
Consequently, each Hankel matrix $H^{(j)}$ represents a trace class
operator on $\ell^{2}(\mathbb{N}_{0})$.

Furthermore, as can be deduced from Theorem~\ref{thm:sol-H-J}, each
Hankel matrix $H^{(j)}$ commutes with a certain Jacobi matrix $J=J^{(j)}$
of the form (\ref{eq:jacobi_mat}) where
\begin{equation}
\alpha_{n}=-k\,(n+1)(n+a+1),\ \beta_{n}=k\,(k+k^{-1})n\,(n+\sigma)+d,\label{eq:alpha-beta-symm}
\end{equation}
and $J^{(j)}$ is therefore determined by a proper choice of the parameters
$a$ and $\sigma$ (note that the multiplicative constant $k$ and
the additive constant $d$ are inessential for the commutation relation).
Jacobi matrix $J^{(j)}$ turns out to have a pure point spectrum which
is necessarily simple. In addition, the Jacobi matrix in each case
is known to determine a unique self-adjoint operator on $\ell^{2}(\mathbb{N}_{0})$
(determinate case). Then from the formal commutation relation between
$H^{(j)}$ and $J^{(j)}$ on the level of semi-infinite matrices as
well as from the fact that the matrix $H^{(j)}$ corresponds to a
bounded operator it readily follows that $H^{(j)}$ preserves the
domain of $J^{(j)}$ (provided $H^{(j)}$ and $J^{(j)}$ are both
regarded as operators). From the simplicity of the spectrum of $J^{(j)}$
one deduces that every eigenvector of $J^{(j)}$ is at the same time
an eigenvector of $H^{(j)}$. Hence a diagonalization of $J^{(j)}$
provides, too, a diagonalization of $H^{(j)}$.

This is also a familiar fact that if $\lambda$ is an eigenvalue of
$J^{(j)}$ then for a corresponding eigenvector one can choose the
vector
\[
\Psi(\lambda):=\big(\hat{P}_{0}(\lambda),\hat{P}_{1}(\lambda),\hat{P}_{2}(\lambda),\ldots\big)^{T}
\]
where $\big(\hat{P}_{n}(x)\big)_{n\geq0}$ is the orthonormal polynomial
sequence associated with $J^{(j)}$ which is unambiguously determined
by letting $\hat{P}_{0}(x)=1$. If
\[
\Spec_{p}J^{(j)}=\{\lambda_{m};\ m\geq0\}
\]
is the point spectrum of $J^{(j)}$ and $\mu$ is the respective orthogonality
measure (normalized as a probability measure and supported on $\Spec_{p}J^{(j)}$)
then the orthogonality relation reads
\[
\sum_{\ell=0}^{\infty}\mu_{\ell}\hat{P}_{m}(\lambda_{\ell})\hat{P}_{n}(\lambda_{\ell})=\delta_{m,n},
\]
where $\mu_{\ell}:=\mu(\{\lambda_{\ell}\})$, and dually,
\[
\sum_{m=0}^{\infty}\hat{P}_{m}(\lambda_{\ell})\hat{P}_{m}(\lambda_{r})=\frac{1}{\mu_{\ell}}\,\delta_{\ell,r}.
\]
Hence
\[
\|\Psi(\lambda_{\ell})\|^{2}=\frac{1}{\mu_{\ell}}\,.
\]

Denoting by $\{\mathbf{e}_{n};\,n\geq0\}$ the canonical basis in
$\ell^{2}(\mathbb{N}_{0})$ we obtain a unitary mapping
\begin{equation}
U\!:\ell^{2}(\mathbb{N}_{0})\to L^{2}\left((0,\infty),\text{d}\mu\right):\mathbf{e}_{n}\mapsto\hat{P}_{n},\label{eq:U}
\end{equation}
which diagonalizes both $J^{(j)}$ and $H^{(j)}$. The operator $UH^{(j)}U^{-1}$
is a multiplication operator by a function $h^{(j)}(x)$ which obeys
\begin{eqnarray}
\hskip-2emh^{(j)}(x) & = & h^{(j)}(x)\hat{P}_{0}(x)\,=\,\sum_{n=0}^{\infty}H_{n,0}^{(j)}\hat{P}_{n}(x).\label{eq:mult-by-h}
\end{eqnarray}
Then the values $h^{(j)}(\lambda_{m})$, $m\geq0$, are exactly the
eigenvalues of $H^{(j)}$.

(i)~~The sequence $h_{n}^{(p)}$, $n\geq0$, in (\ref{eq:def_hank3})
coincides with the solution $h_{n}$ given in (\ref{eq:sol-H-2F1})
for the values of parameters
\[
a=-\frac{1}{2}\,,\ \sigma=\frac{1}{1+k^{2}}\,,\ \text{whence}\ \,\omega(a,\sigma)=\frac{3}{2}\,.
\]
Theorem~\ref{thm:sol-H-J} then implies that the Hankel matrix $H^{(p)}$
commutes with a Jacobi matrix $J^{(p)}$ of the form (\ref{eq:jacobi_mat})
where $\alpha_{n}$ and $\beta_{n}$ are replaced by
\[
\alpha_{n}^{(p)}:=-2k(n+1)(2n+1)\ \,\text{and}\ \,\beta_{n}^{(p)}:=k^{2}(2n)^{2}+(2n+1)^{2}.
\]
The recurrence (\ref{eq:recur_carlitz3}) means that
\[
p_{n+1}(x)=(x-\beta_{n}^{(p)})\,p_{n}(x)-\left(\alpha_{n-1}^{(p)}\right)^{2}p_{n-1}(x),\ n\geq0.
\]
Hence the Jacobi matrix $J^{(p)}$ corresponds to the Family~\#3
of the Stieltjes-Carlitz polynomials.

From (\ref{eq:og_rel_carlitz3}) we know that the set of functions
\[
\hat{P}_{n}(x):=\frac{(-1)^{n}p_{n}(x)}{k^{n}(2n)!}\,,\ n\in\mathbb{N}_{0},
\]
is an orthonormal basis of the Hilbert space $L^{2}\left((0,\infty),\text{d}\mu\right)$
where $\mu$ is given in (\ref{eq:mu_carlitz34}). Referring to (\ref{eq:U})
and (\ref{eq:mult-by-h}), the diagonalized operator $UH^{(p)}U^{-1}$
is a multiplication operator by a function $h^{(p)}(x)$ which can
be computed as follows (see (\ref{eq:Hankel-H-h}) and (\ref{eq:def_hank3}))
\[
h^{(p)}(x)=\sum_{n=0}^{\infty}(-1)^{n}\frac{h_{n}^{(p)}p_{n}(x)}{k^{n}(2n)!}=\frac{4}{\sqrt{\pi}}\sum_{n=0}^{\infty}(-1)^{n}\frac{p_{n}(x)}{(2n)!}\int_{0}^{1}t^{2n}\sqrt{\frac{1-t^{2}}{1-k^{2}t^{2}}}\,\text{d}t.
\]
From (\ref{eq:h3_sum_form}) we find that
\[
h^{(p)}(x)=\frac{4}{\sqrt{\pi}}\int_{0}^{K}\cos(\sqrt{x}u)\cn(u)\,du.
\]

Finally, the last equation combined with the Fourier series (\ref{eq:four_cn})
allows us to evaluate the function $h^{(p)}(x)$ at the spectral points
$\lambda_{m}$ of the Jacobi matrix $J^{(p)}$, as given in (\ref{eq:mu_carlitz34}).
The obtained values read
\[
h^{(p)}(\lambda_{m})=\frac{4\sqrt{\pi}}{k}\frac{q^{m+1/2}}{1+q^{2m+1}}\,,\ \,m\geq0,
\]
and these are in fact the eigenvalues of $H^{(p)}$.

(ii)~~The sequence $h_{n}^{(q)}$, $n\geq0$, in (\ref{eq:def_hank4})
coincides with the solution $h_{n}$ given in (\ref{eq:sol-H-2F1})
for the values of parameters
\[
a=\frac{1}{2}\,,\ \sigma=\frac{1+2k^{2}}{1+k^{2}}\,,\ \text{whence}\ \,\omega(a,\sigma)=\frac{1}{2}\,.
\]
Theorem~\ref{thm:sol-H-J} then implies that the Hankel matrix $H^{(q)}$
commutes with a Jacobi matrix $J^{(q)}$ of the form (\ref{eq:jacobi_mat})
where $\alpha_{n}$ and $\beta_{n}$ are replaced by
\[
\alpha_{n}^{(q)}:=-2k(n+1)(2n+3)\ \,\text{and}\ \,\beta_{n}^{(q)}:=(2n+1)^{2}+k^{2}(2n+2)^{2}.
\]
The recurrence (\ref{eq:recur_carlitz4}) means that
\[
q_{n+1}(x)=(x-\beta_{n}^{(q)})\,q_{n}(x)-\left(\alpha_{n-1}^{(q)}\right)^{2}q_{n-1}(x),\ n\geq0.
\]
Hence the Jacobi matrix $J^{(q)}$ corresponds to the Family~\#4
of the Stieltjes-Carlitz polynomials.

From (\ref{eq:og_rel_carlitz4}) we know that the set of functions
\[
\hat{Q}_{n}(x):=\frac{(-1)^{n}q_{n}(x)}{k^{n}(2n+1)!}\,,\ n\geq0,
\]
is an orthonormal basis of the Hilbert space $L^{2}\left((0,\infty),x\text{d}\mu(x)\right)$
where $\mu$ is given by (\ref{eq:mu_carlitz34}). Referring to (\ref{eq:U})
and (\ref{eq:mult-by-h}), with $\hat{Q}_{n}$ instead of $\hat{P}_{n}$,
the diagonalized operator $UH^{(q)}U^{-1}$ is a multiplication operator
by a function $h^{(q)}(x)$ which can be computed as follows (see
(\ref{eq:Hankel-H-h}) and (\ref{eq:def_hank4}))
\[
h^{(q)}(x)=\sum_{n=0}^{\infty}(-1)^{n}\frac{h_{n}^{(q)}q_{n}(x)}{k^{n}(2n+1)!}=\frac{2}{\sqrt{\pi}}\sum_{n=0}^{\infty}(-1)^{n}\frac{q_{n}(x)}{(2n+1)!}\int_{0}^{1}t^{2n+2}\sqrt{\frac{1-k^{2}t^{2}}{1-t^{2}}}\,\text{d}t.
\]
From (\ref{eq:h4_sum_form}) we find that
\[
h^{(q)}(x)=\frac{2}{\sqrt{\pi}}\int_{0}^{K}\cos(\sqrt{x}u)\cn(u)\,\text{d}u.
\]

Finally, the last equation combined with the Fourier series (\ref{eq:four_cn})
allows us to evaluate the function $h^{(q)}(x)$ at the spectral points
$\lambda_{m}$ of the Jacobi matrix $J^{(q)}$, as given in (\ref{eq:mu_carlitz34}).
The obtained values read
\[
h^{(q)}(\lambda_{m})=\frac{2\sqrt{\pi}}{k}\frac{q^{m+1/2}}{1+q^{2m+1}}\,,\ \,m\geq0,
\]
and these are in fact the eigenvalues of $H^{(q)}$.

(iii)~~The sequence $h_{n}^{(r)}$, $n\geq0$, in (\ref{eq:def_hank5})
coincides with the solution $h_{n}$ given in (\ref{eq:sol-H-2F1})
for the values of parameters
\[
a=-\frac{1}{2}\,,\ \sigma=\frac{k^{2}}{1+k^{2}}\,,\ \text{whence}\ \,\omega(a,\sigma)=\frac{1}{2}\,.
\]
Theorem~\ref{thm:sol-H-J} then implies that the Hankel matrix $H^{(r)}$
commutes with a Jacobi matrix $J^{(r)}$ of the form (\ref{eq:jacobi_mat})
where $\alpha_{n}$ and $\beta_{n}$ are replaced by
\[
\alpha_{n}^{(r)}:=-2k(n+1)(2n+1)\ \,\text{and}\ \,\beta_{n}^{(r)}:=(2n)^{2}+k^{2}(2n+1)^{2}.
\]
The recurrence (\ref{eq:recur_carlitz5}) means that
\[
r_{n+1}(x)=(x-\beta_{n}^{(r)})\,r_{n}(x)-\left(\alpha_{n-1}^{(r)}\right)^{2}r_{n-1}(x),\ n\geq0.
\]
Hence the Jacobi matrix $J^{(r)}$ corresponds to the Family~\#5
of the Stieltjes-Carlitz polynomials.

From (\ref{eq:og_rel_carlitz5}) we know that the set of functions
\[
\hat{R}_{n}(x):=\frac{(-1)^{n}r_{n}(x)}{k^{n}(2n)!}\,,\ n\geq0,
\]
is an orthonormal basis of the Hilbert space $L^{2}\left((0,\infty),\text{d}\mu(x)\right)$
where $\mu$ is given by (\ref{eq:mu_carlitz56}). Referring to (\ref{eq:U})
and (\ref{eq:mult-by-h}), with $\hat{R}_{n}$ instead of $\hat{P}_{n}$,
the diagonalized operator $UH^{(r)}U^{-1}$ is a multiplication operator
by a function $h^{(r)}(x)$ which can be computed as follows (see
(\ref{eq:Hankel-H-h}) and (\ref{eq:def_hank5}))

\[
h^{(r)}(x)=\sum_{n=0}^{\infty}(-1)^{n}\frac{h_{n}^{(r)}r_{n}(x)}{k^{n}(2n)!}=\frac{2}{\sqrt{\pi}}\sum_{n=0}^{\infty}(-1)^{n}\frac{r_{n}(x)}{(2n)!}\int_{0}^{1}t^{2n}\sqrt{\frac{1-k^{2}t^{2}}{1-t^{2}}}\,\text{d}t.
\]
From (\ref{eq:h5_sum_form}) we find that
\[
h^{(r)}(x)=\frac{2}{\sqrt{\pi}}\int_{0}^{K}\cos(\sqrt{x}u)\dn(u)\,\text{d}u.
\]

Finally, the last equation combined with the Fourier series (\ref{eq:four_dn})
allows us to evaluate the function $h^{(r)}(x)$ at the spectral points
$\lambda_{m}$ of the Jacobi matrix $J^{(r)}$, as given in (\ref{eq:mu_carlitz56}).
The obtained values read
\[
h^{(r)}(\lambda_{m})=\frac{2\sqrt{\pi}q^{m}}{1+q^{2m}}\,,\ \,m\geq0,
\]
and these are in fact the eigenvalues of $H^{(r)}$.

(iv)~~The sequence $h_{n}^{(s)}$, $n\geq0$, in (\ref{eq:def_hank6})
coincides with the solution $h_{n}$ given in (\ref{eq:sol-H-2F1})
for the values of parameters
\[
a=\frac{1}{2}\,,\ \sigma=\frac{2+k^{2}}{1+k^{2}}\,,\ \text{whence}\ \,\omega(a,\sigma)=\frac{3}{2}\,.
\]
Theorem~\ref{thm:sol-H-J} then implies that the Hankel matrix $H^{(s)}$
commutes with a Jacobi matrix $J^{(s)}$ of the form (\ref{eq:jacobi_mat})
where $\alpha_{n}$ and $\beta_{n}$ are replaced by
\[
\alpha_{n}^{(s)}:=-2k(n+1)(2n+3)\ \,\text{and}\ \,\beta_{n}^{(s)}:=k^{2}(2n+1)^{2}+(2n+2)^{2}.
\]
The recurrence (\ref{eq:recur_carlitz6}) means that
\[
s_{n+1}(x)=(x-\beta_{n}^{(s)})\,s_{n}(x)-\left(\alpha_{n-1}^{(s)}\right)^{2}s_{n-1}(x),\ n\geq0.
\]
Hence the Jacobi matrix $J^{(s)}$ corresponds to the Family~\#6
of the Stieltjes-Carlitz polynomials.

From (\ref{eq:og_rel_carlitz6}) we know that the set of functions
\[
\hat{S}_{n}(x):=\frac{(-1)^{n}s_{n}(x)}{k^{n}(2n+1)!}\,,\ n\geq0,
\]
is an orthonormal basis of the Hilbert space $L^{2}\left((0,\infty),k^{-2}x\text{d}\mu(x)\right)$
where $\mu$ is given in (\ref{eq:mu_carlitz56}). Referring to (\ref{eq:U})
and (\ref{eq:mult-by-h}), with $\hat{S}_{n}$ instead of $\hat{P}_{n}$,
the diagonalized operator $UH^{(s)}U^{-1}$ is a multiplication operator
by a function $h^{(s)}(x)$ which can be computed as follows (see
(\ref{eq:Hankel-H-h}) and (\ref{eq:def_hank6}))
\[
h^{(s)}(x)=\sum_{n=0}^{\infty}(-1)^{n}\frac{h_{n}^{(s)}s_{n}(x)}{k^{n}(2n+1)!}=\frac{4}{\sqrt{\pi}}\sum_{n=0}^{\infty}(-1)^{n}\frac{s_{n}(x)}{(2n+1)!}\int_{0}^{1}t^{2n+2}\sqrt{\frac{1-t^{2}}{1-k^{2}t^{2}}}\,\text{d}t.
\]
From (\ref{eq:h6_sum_form}) we find that
\[
h^{(s)}(x)=-\frac{4\sqrt{1-k^{2}}\sin(\sqrt{x}K)}{k^{2}\sqrt{\pi x}}+\frac{4}{k^{2}\sqrt{\pi}}\int_{0}^{K}\cos(\sqrt{x}u)\dn(u)\,\text{d}u.
\]

Finally, the last equation combined with the Fourier series (\ref{eq:four_dn})
allows us to evaluate the function $h^{(s)}(x)$ at the spectral points
$\lambda_{m}$ of the Jacobi matrix $J^{(s)}$, as given in (\ref{eq:mu_carlitz56}).
The obtained values read
\[
h^{(s)}(\lambda_{m})=\frac{4\sqrt{\pi}}{k^{2}}\,\frac{q^{m}}{1+q^{2m}}\,,\ \,m\geq1,
\]
and these are in fact the eigenvalues of $H^{(s)}$. Note that $\lambda_{0}=0$
is not an eigenvalue of $J^{(s)}$ since it does not belong to the
support of the measure of orthogonality in (\ref{eq:og_rel_carlitz6}).
Consequently, $h^{(s)}(0)$ is not an eigenvalue of $H^{(s)}$. \end{proof}

\section{Families \#1 and \#2, a generalization to weighted Hankel matrices\label{sec:Families-1-2}}

The main focus of the paper so far was on Hankel matrices which admit
an explicit solution of the spectral problem owing to their close
relationship to the Stieltjes-Carlitz polynomials. This concerns Families
\#3, \#4, \#5 and \#6 only. Our approach does not lead to explicitly
diagonalizable Hankel matrices in case of Families \#1 and \#2. In
this section we propose an extension of the forgoing results by considering
also weighted Hankel matrices of the form
\begin{equation}
H_{m,n}=w_{m}w_{n}h_{m+n},\ m,n\in\mathbb{N}_{0}.\label{eq:def_whank}
\end{equation}
The weights $w_{n}$ are supposed to be positive. Admitting nontrivial
(non-constant) weights we are able to enrich the list of explicitly
diagonalizable matrix operators by several additional items and, in
particular, the generalized approach can be applied to Families \#1
and \#2 as well. Apart of this generalization the basic scheme remains
practically the same as in Section~\ref{sec:The-main-theorem}. This
is why we try to be rather brief in the current section and we omit
some details for routine steps in the derivations to follow.

First of all we have to modify equation (\ref{eq:a_b_H}). The formal
commutation relation $HJ=JH$ between a weighted Hankel matrix $H$
and a Jacobi matrix of the form (\ref{eq:jacobi_mat}), where the
multiplication is understood on the level of semi-infinite matrices,
is satisfied if and only if
\begin{eqnarray*}
 &  & (\beta_{m}-\beta_{n})w_{m}w_{n}h_{m+n}+(\alpha_{m-1}w_{m-1}w_{n}-\alpha_{n-1}w_{m}w_{n-1})h_{m+n-1}\\
 &  & +\,(\alpha_{m}w_{m+1}w_{n}-\alpha_{n}w_{m}w_{n+1})h_{m+n+1}=0
\end{eqnarray*}
holds for all $m,n\in\mathbb{N}_{0}$, $m<n$. Here and everywhere
in what follows we assume that $\alpha_{-1}=0$. An easy computation
leads to the following lemma.

\begin{lemma}\label{lem:diff_eq} Let $J$ be a Jacobi matrix (\ref{eq:jacobi_mat})
with entries given by (\ref{eq:alpha-beta-gen}). Then a matrix $H$
with entries (\ref{eq:def_whank}) commutes formally, on the level
of semi-infinite matrices, with $J$ provided the sequence $(h_{n})_{n\geq0}$
satisfies the difference equation
\begin{equation}
(k+k^{-1})(n+\sigma)h_{n}-(n+a)h_{n-1}-(n+b+c+2)h_{n+1}=0,\ n\geq1,\label{eq:diff_h}
\end{equation}
and
\begin{equation}
w_{n}=\sqrt{\frac{(b+1)_{n}(c+1)_{n}}{n!\,(a+1)_{n}}}\,,\ n\geq0.\label{eq:weight_gener}
\end{equation}
\end{lemma}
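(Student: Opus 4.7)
The plan is to verify the statement by direct substitution, the only real work being the observation that the weights (\ref{eq:weight_gener}) are precisely what it takes to wash the square roots out of the coefficients of the commutation relation. The commutation identity displayed just above the lemma holds for all $0\le m<n$; I would first divide it through by $w_m w_n$ to recast it as
\[
(\beta_m - \beta_n)h_{m+n} + \left(\alpha_{m-1}\frac{w_{m-1}}{w_m} - \alpha_{n-1}\frac{w_{n-1}}{w_n}\right) h_{m+n-1} + \left(\alpha_m \frac{w_{m+1}}{w_m} - \alpha_n \frac{w_{n+1}}{w_n}\right) h_{m+n+1} = 0,
\]
so that the task reduces to analyzing the ``renormalized'' coefficients $\alpha_n w_{n+1}/w_n$ and $\alpha_{n-1} w_{n-1}/w_n$.

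Next, a short Pochhammer computation gives
\[
\frac{w_{n+1}}{w_n} = \sqrt{\frac{(n+b+1)(n+c+1)}{(n+1)(n+a+1)}}, \qquad \frac{w_{n-1}}{w_n} = \sqrt{\frac{n(n+a)}{(n+b)(n+c)}},
\]
and upon multiplying by $\alpha_n$ or $\alpha_{n-1}$ from (\ref{eq:alpha-beta-gen}) the radicals cancel exactly, leaving the polynomials
\[
\alpha_n \frac{w_{n+1}}{w_n} = -(n+b+1)(n+c+1), \qquad \alpha_{n-1}\frac{w_{n-1}}{w_n} = -n(n+a).
\]
At $m=0$ these identities are to be read with the convention $\alpha_{-1}=0$, and indeed $-n(n+a)$ vanishes at $n=0$, so the boundary contributes nothing extra. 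This radical cancellation is the single genuine obstacle, and it is the structural reason behind the specific choice (\ref{eq:weight_gener}): any other normalization would leave the ``renormalized coefficients'' as square roots of polynomials rather than polynomials.

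The final step is to note that all three quantities $\beta_n$, $-(n+b+1)(n+c+1)$, and $-n(n+a)$ are quadratic in $n$, hence each difference in the displayed identity factors as $(n-m)$ times an expression depending only on $N:=m+n$:
\[
\beta_m-\beta_n = -(k+k^{-1})(n-m)(N+\sigma),
\]
\[
\alpha_{m-1}\frac{w_{m-1}}{w_m}-\alpha_{n-1}\frac{w_{n-1}}{w_n} = (n-m)(N+a), \qquad \alpha_m\frac{w_{m+1}}{w_m}-\alpha_n\frac{w_{n+1}}{w_n} = (n-m)(N+b+c+2).
\]
Cancelling the common factor $n-m\neq 0$ and multiplying by $-1$ produces precisely equation (\ref{eq:diff_h}) with index $N$; since every integer $N\ge 1$ arises as $m+n$ for some $0\le m<n$, this single three-term scalar recurrence is equivalent to the full system of commutation relations, which completes the verification.
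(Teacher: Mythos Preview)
Your proof is correct and is precisely the ``easy computation'' the paper alludes to in lieu of a written proof. The only thing to note is that the lemma as stated asks merely for sufficiency (``provided''), so your final sentence establishes slightly more than required---but the equivalence you observe is true and does no harm.
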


Note that equation (\ref{eq:diff_h}) is again of type (\ref{eq:three-term-eq})
which has been studied in Section~\ref{sec:A-three-term-recurrence}.

Similarly as in Section~\ref{sec:Auxiliary-results} we shall need
some auxiliary results. All of them can be derived in a routine way
by using standard methods. The following proposition was shown in
\cite[Thm.~3.1]{ismail-valent_ijm98} and in the course of the proof
of Theorem~3.3.1 in \cite{ismail-etal_jat01} though the notation
therein was different from ours.

\begin{proposition}\label{thm:asympt-SC-poly-fg} The leading terms
in the asymptotic expansion of the Stieltjes-Carlitz polynomials $f_{n}(x)$
and $g_{n}(x)$ defined in (\ref{eq:recur_carlitz1}) and (\ref{eq:recur_carlitz2}),
respectively, are as follows
\begin{eqnarray*}
\frac{f_{n}(-x)}{(2n)!} & = & \frac{1}{\sqrt{\pi n(1-k^{2})}}\,\cos\!\big(\sqrt{x}K\big)+o\!\left(\frac{1}{n}\right)\!,\\
\frac{g_{n}(-x)}{(2n+1)!} & = & \frac{1}{\sqrt{\pi n(1-k^{2})}}\,\frac{\sin\!\big(\sqrt{x}K\big)}{\sqrt{x}}+o\!\left(\frac{1}{n}\right)\!,
\end{eqnarray*}
 as $n\to\infty$. Here $x$ is an arbitrary fixed complex number.
\end{proposition}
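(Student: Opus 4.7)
The plan is to apply the Darboux method to the generating functions (\ref{eq:gener_func_carlitz1}) and (\ref{eq:gener_func_carlitz2}) in complete analogy with the proof of Proposition~\ref{thm:asympt-SC-poly}. First I substitute $x\mapsto -x$ (using $\sinh(\sqrt{-x}\,u)/\sqrt{-x}=\sin(\sqrt{x}\,u)/\sqrt{x}$) and then change variable via $\xi=\sn^{2}(u)$, so that $\sn(u)=\sqrt{\xi}$ and $u=\sn^{-1}(\sqrt{\xi})$. Writing the resulting power series explicitly,
\[
G_{f}(\xi):=\sum_{n=0}^{\infty}\frac{f_{n}(-x)}{(2n+1)!}\,\xi^{n}=\frac{\sin\!\big(\sqrt{x}\,\sn^{-1}(\sqrt{\xi})\big)}{\sqrt{x\xi}},
\]
\[
G_{g}(\xi):=\sum_{n=0}^{\infty}\frac{g_{n}(-x)}{(2n+1)!}\,\xi^{n}=\frac{\sin\!\big(\sqrt{x}\,\sn^{-1}(\sqrt{\xi})\big)}{\sqrt{x\xi\,(1-\xi)(1-k^{2}\xi)}},
\]
where in the denominator of $G_{g}$ we used $\cn^{2}(u)=1-\xi$ and $\dn^{2}(u)=1-k^{2}\xi$. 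The singularity of each function closest to the origin sits at $\xi=1$, so this is the point which drives the asymptotics.

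Next I extract the local behavior at $\xi=1$. From $\sn^{-1}(\sqrt{\xi})=\int_{0}^{\sqrt{\xi}}dt/\sqrt{(1-t^{2})(1-k^{2}t^{2})}$, an elementary expansion near $\xi=1$ yields
\[
\sn^{-1}(\sqrt{\xi})=K-\sqrt{(1-\xi)/(1-k^{2})}+O\big((1-\xi)^{3/2}\big).
\]
Plugging this into $G_{f}$ gives the natural comparison function
\[
G_{f,c}(\xi):=\frac{\sin(\sqrt{x}\,K)}{\sqrt{x}}-\frac{\cos(\sqrt{x}\,K)}{\sqrt{1-k^{2}}}\,\sqrt{1-\xi},
\]
so that $G_{f}-G_{f,c}$ is $C^{1}$ on the unit circle except for an integrable $\theta^{-1/2}$-type singularity at $\theta=0$. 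For $G_{g}$ the leading singular term is of order $(1-\xi)^{-1/2}$, giving
\[
G_{g,c}(\xi):=\frac{\sin(\sqrt{x}\,K)}{\sqrt{x(1-k^{2})}}\,(1-\xi)^{-1/2},
\]
and the same smoothness statement is verified for $G_{g}-G_{g,c}$.

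Then Darboux's method (see~\cite[Sec.~8.9]{olver_97}) reduces the extraction of the asymptotics of the $\xi^{n}$ coefficients to the corresponding coefficients of $G_{f,c}$ and $G_{g,c}$. Using the standard expansions
\[
[\xi^{n}](1-\xi)^{1/2}=-\frac{1}{2\sqrt{\pi}\,n^{3/2}}\big(1+O(1/n)\big),\qquad
[\xi^{n}](1-\xi)^{-1/2}=\frac{1}{\sqrt{\pi n}}\big(1+O(1/n)\big),
\]
I obtain
\[
\frac{f_{n}(-x)}{(2n+1)!}=\frac{\cos(\sqrt{x}\,K)}{2\sqrt{\pi(1-k^{2})}\,n^{3/2}}+o(1/n^{2}),\quad
\frac{g_{n}(-x)}{(2n+1)!}=\frac{\sin(\sqrt{x}\,K)}{\sqrt{x}\sqrt{\pi n(1-k^{2})}}+o(1/n).
\]
Finally, the identity $(2n+1)!=(2n+1)(2n)!$ converts the first expression into the one stated in the proposition (with $(2n+1)\sim 2n$ absorbing the factor of $2$ in the denominator), while the second is already in the claimed form.

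The main technical nuisance is the Darboux smoothness check on the unit circle, i.e.\ showing that after subtracting the comparison functions the remainders $G_{f}-G_{f,c}$ and $G_{g}-G_{g,c}$ and their derivatives have only the integrable $\theta^{-1/2}$-type singularity at $\theta=0$ that Darboux allows. This amounts to verifying that higher order terms in the expansion of $\sn^{-1}(\sqrt{\xi})$ at $\xi=1$ provide only a smooth correction times $(1-\xi)^{1/2}$. Apart from this verification, everything else is routine bookkeeping entirely parallel to the argument given for $p_{n}$ in the proof of Proposition~\ref{thm:asympt-SC-poly}, and since the result has already been established in~\cite{ismail-valent_ijm98,ismail-etal_jat01} only the principal steps need to be indicated here.
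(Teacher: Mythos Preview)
Your proposal is correct and follows the natural Darboux-method approach, entirely parallel to the paper's proof of Proposition~\ref{thm:asympt-SC-poly}. The paper itself does not supply a proof of this proposition at all: it simply cites \cite{ismail-valent_ijm98} and \cite{ismail-etal_jat01}, where the result was already established, so your sketch in fact provides more detail than the paper does. One minor imprecision: for $G_{f}$ the difference $G_{f}-G_{f,c}$ is genuinely $C^{1}$ on the circle (the next terms are of order $(1-\xi)$ and $(1-\xi)^{3/2}$), and it is the \emph{second} derivative that has the integrable $\theta^{-1/2}$ singularity; this is what justifies the $o(1/n^{2})$ error you claim before multiplying through by $2n+1$.
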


\begin{proposition}\label{thm:asympt-fg} For $x\in\mathbb{C}$ and
the Stieltjes-Carlitz polynomials $f_{n}(x)$ and $g_{n}(x)$, defined
in (\ref{eq:recur_carlitz1}), and (\ref{eq:recur_carlitz2}), respectively,
it holds true that
\begin{eqnarray}
\frac{\pi}{4}\sum_{n=0}^{\infty}\frac{f_{n}(-x)}{4^{n}n!(n+1)!} & = & \int_{0}^{K}\cos(\sqrt{x}u)\cn(u)\,\text{d}u,\label{eq:h1_sum_form-1}\\
\frac{\pi}{8}\sum_{n=0}^{\infty}\frac{g_{n}(-x)}{4^{n}n!(n+2)!} & = & -\frac{\sqrt{1-k^{2}}}{k^{2}}\,\frac{\sin(\sqrt{x}K)}{\sqrt{x}}+\frac{1}{k^{2}}\int_{0}^{K}\cos(\sqrt{x}u)\dn(u)\,\text{d}u.\nonumber \\
\label{eq:h2_sum_form-1}
\end{eqnarray}
\end{proposition}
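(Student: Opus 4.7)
The plan is to follow the same blueprint as in Proposition~\ref{thm:sum-SCpoly-3456}, with the generating functions~\eqref{eq:gener_func_carlitz1} and~\eqref{eq:gener_func_carlitz2} now playing the role of~\eqref{eq:gener_func_carlitz3}--\eqref{eq:gener_func_carlitz6}. The key preparatory step is to recognize the combinatorial factors $1/(4^{n}n!(n+1)!)$ and $1/(4^{n}n!(n+2)!)$ as integrals involving powers of $\sn$. To this end I would combine the Legendre duplication identity $(2n+1)!=2\cdot 4^{n}n!(1/2)_{n+1}$ with the Beta integrals
\[
B\!\left(n+\tfrac{3}{2},\tfrac{1}{2}\right)=2\int_{0}^{1}\frac{s^{2n+2}}{\sqrt{1-s^{2}}}\,\text{d}s,\qquad B\!\left(n+\tfrac{3}{2},\tfrac{3}{2}\right)=2\int_{0}^{1}s^{2n+2}\sqrt{1-s^{2}}\,\text{d}s,
\]
and substitute $s=\sn(u)$, so that $\text{d}s=\cn(u)\dn(u)\,\text{d}u$ and $\sqrt{1-s^{2}}=\cn(u)$. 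After a short computation this produces the identities
\[
\frac{\pi}{4\cdot 4^{n}n!(n+1)!}=\frac{1}{(2n+1)!}\int_{0}^{K}\!\sn^{2n+2}(u)\dn(u)\,\text{d}u,\ \ \frac{\pi}{8\cdot 4^{n}n!(n+2)!}=\frac{1}{(2n+1)!}\int_{0}^{K}\!\sn^{2n+2}(u)\cn^{2}(u)\dn(u)\,\text{d}u.
\]

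Multiplying by $f_{n}(-x)$ (resp.\ $g_{n}(-x)$), summing on $n$, and interchanging summation with integration reduces the LHS of~\eqref{eq:h1_sum_form-1}, via~\eqref{eq:gener_func_carlitz1} with $x\mapsto-x$, to $\int_{0}^{K}\frac{\sin(\sqrt{x}u)}{\sqrt{x}}\,\sn(u)\dn(u)\,\text{d}u$; similarly, the LHS of~\eqref{eq:h2_sum_form-1} becomes $\int_{0}^{K}\frac{\sin(\sqrt{x}u)}{\sqrt{x}}\,\sn(u)\cn(u)\,\text{d}u$ after the factor $1/(\cn(u)\dn(u))$ arising from~\eqref{eq:gener_func_carlitz2} cancels one $\cn\dn$ from the integrand. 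A single integration by parts then completes each identity. For~\eqref{eq:h1_sum_form-1} I would use $\text{d}\cn(u)/\text{d}u=-\sn(u)\dn(u)$ together with the values~\eqref{eq:sn_cn_dn_0} and~\eqref{eq:sn_cn_dn_K}, both of which make the boundary contributions vanish, yielding directly $\int_{0}^{K}\cos(\sqrt{x}u)\cn(u)\,\text{d}u$. For~\eqref{eq:h2_sum_form-1} I would use $\text{d}\dn(u)/\text{d}u=-k^{2}\sn(u)\cn(u)$; now the nonvanishing value $\dn(K)=\sqrt{1-k^{2}}$ generates precisely the explicit boundary term $-\sqrt{1-k^{2}}\sin(\sqrt{x}K)/(k^{2}\sqrt{x})$ appearing on the RHS.

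The only step that requires any care is the justification of the interchange of sum and integral, which I would carry out by Fubini. From Proposition~\ref{thm:asympt-SC-poly-fg} one has $f_{n}(-x)/(2n+1)!=O(n^{-3/2})$ and $g_{n}(-x)/(2n+1)!=O(n^{-1/2})$, while a Laplace-method estimate in the spirit of Lemma~\ref{thm:asympt-int-sn-cn-dn} (the mass of the $\sn^{2n+2}$ factor concentrates at $u=K$, where $\cn(u)$ vanishes linearly and $\dn(u)=\sqrt{1-k^{2}}$) shows that the corresponding $\sn$-integrals decay like $n^{-1/2}$ and $n^{-3/2}$, respectively. In both cases the termwise product is $O(n^{-2})$ and hence summable, so the interchange is legitimate. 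I foresee no other obstacle, the remaining manipulations being of exactly the same routine nature as in Proposition~\ref{thm:sum-SCpoly-3456}.
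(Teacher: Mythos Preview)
Your proof is correct and follows essentially the same blueprint as the paper's. The only organizational difference is that the paper first differentiates the generating functions~\eqref{eq:gener_func_carlitz1} and~\eqref{eq:gener_func_carlitz2} term-wise in $u$ (producing $\cos(\sqrt{x}u)$ directly in the first case), whereas you use the generating functions as they stand and perform a single integration by parts at the end; these are equivalent rearrangements of the same computation, and your version has the mild advantage of treating~\eqref{eq:h1_sum_form-1} and~\eqref{eq:h2_sum_form-1} in a completely uniform manner.
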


\begin{proof} Equation (\ref{eq:h1_sum_form-1}) can be derived from
the formula for the generating function  (\ref{eq:gener_func_carlitz1}).
One has to write $-x$ instead of $x$ and differentiate the formula
term-wise with respect to $u$ and use the integral identity
\[
\int_{0}^{K}\sn^{2n}(u)\cn(u)\,\frac{\text{d}\sn(u)}{\text{d}u}\,\text{d}u=\int_{0}^{1}y^{2n}\sqrt{1-y^{2}}\,\text{d}y=\frac{\pi}{2^{2n+2}}\frac{(2n)!}{n!(n+1)!},\ \,n\geq0.
\]

Very similarly, equation (\ref{eq:h2_sum_form-1}) can be derived
from the formula for the generating function  (\ref{eq:gener_func_carlitz2}).
This time the integral identity
\[
\int_{0}^{K}\sn^{2n}(u)\cn^{3}(u)\,\frac{\text{d}\sn(u)}{\text{d}u}\,\text{d}u=\int_{0}^{1}y^{2n}\left(1-y^{2}\right)^{3/2}\,\text{d}y=\frac{3\pi}{2^{2n+3}}\frac{(2n)!}{n!(n+2)!},\ \,n\geq0,
\]
turns out to be useful. This case is slightly more complicated since
finally one has to integrate by parts on the RHS to get the desired
expression.

Manipulations used during the derivation in both cases can be justified
with the aid of Proposition~\ref{thm:asympt-fg}. \end{proof}

Below we present two weighted Hankel matrices which have comparatively
simple form and which are related to Families \#1 and \#2 of the Stieltjes-Carlitz
polynomials,
\begin{eqnarray}
H_{m,n}^{(f)} & := & \binom{2n}{n}\binom{2m}{m}\left(\frac{k}{4}\right)^{\!m+n}\,\frac{\sqrt{(2n+1)(2m+1)}}{m+n+1}\,,\label{eq:def_hank1}\\
H_{m,n}^{(g)} & := & \binom{2n+1}{n}\binom{2m+1}{m}\left(\frac{k}{4}\right)^{\!m+n}\,\frac{\sqrt{(m+1)(n+1)}}{m+n+2}\,,\label{eq:def_hank2}
\end{eqnarray}
$m,n\in\mathbb{N}_{0}$.

\begin{theorem}\label{thm:hank12} The weighted Hankel matrices $H^{(f)}$
and $H^{(g)}$ represent both positive trace class operators on $\ell^{2}(\mathbb{N}_{0})$
with simple eigenvalues. We have:

\noindent (i)~~Eigenvalues of $H^{(f)}$ enumerated in descending
order are
\[
{\displaystyle \nu_{m}^{(f)}=\frac{4}{k}\,\frac{q^{m+1/2}}{1+q^{2m+1}}}\,,\ m\geq0.
\]
\noindent An eigenvector $\Psi_{m}^{(f)}$ corresponding to $\nu_{m}^{(f)}$
can be chosen with the entries
\[
\left(\Psi_{m}^{(f)}\right)_{n}=\frac{1}{k^{n}(2n)!\sqrt{2n+1}}\,f_{n}\!\left(-\frac{\pi^{2}(2m+1)^{2}}{4K^{2}}\right)\!,\ \,n\geq0,
\]
and its $\ell^{2}$-norm equals
\[
\|\Psi_{m}^{(f)}\|^{2}=\frac{kK^{2}}{\pi^{2}}\,\frac{1-q^{2m+1}}{(2m+1)q^{m+1/2}}\,.
\]

\noindent (ii)~~Eigenvalues of $H^{(g)}$ enumerated in descending
order are
\[
{\displaystyle \nu_{m}^{(g)}=\frac{8}{k^{2}}\,\frac{q^{m}}{1+q^{2m}}}\,,\ m\geq1.
\]
\noindent An eigenvector $\Psi_{m}^{(g)}$ corresponding to $\nu_{m}^{(g)}$
can be chosen with the entries
\[
\left(\Psi_{m}^{(g)}\right)_{n}=\frac{1}{k^{n}(2n+1)!\,\sqrt{n+1}}\,g_{n}\!\left(-\frac{\pi^{2}m^{2}}{K^{2}}\right)\!,\ \,n\geq0,
\]
and its $\ell^{2}$-norm equals
\[
\|\Psi_{m}^{(g)}\|^{2}=\frac{k^{2}K^{4}}{\pi^{4}}\,\frac{1-q^{2m}}{m^{3}q^{m}}\,.
\]
\end{theorem}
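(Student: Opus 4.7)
The plan is to mirror the strategy used in the proof of Theorem~\ref{thm:hank3456}, replacing Theorem~\ref{thm:sol-H-J} and Proposition~\ref{thm:sum-SCpoly-3456} by their weighted counterparts, namely Lemma~\ref{lem:diff_eq} and Proposition~\ref{thm:asympt-fg}. Three steps are needed. First, each of $H^{(f)}$ and $H^{(g)}$ has to be written in the factorized form $w_m w_n h_{m+n}$ demanded by Lemma~\ref{lem:diff_eq}; second, the commuting Jacobi matrix has to be identified, up to an affine rescaling, with the one generated by the Family~\#1 (resp.~\#2) recurrence; third, the eigenvalues have to be read off by conjugating $H^{(f)}$ to a multiplication operator and evaluating its symbol on the spectrum by means of the Fourier series of the Jacobian elliptic functions.

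For $H^{(f)}$ I will take $w_n=\binom{2n}{n}\sqrt{2n+1}/4^{n}$ and $h_n=k^n/(n+1)$; a short computation shows that this matches~(\ref{eq:weight_gener}) with $a=0$, $\{b,c\}=\{1/2,-1/2\}$, and direct substitution into~(\ref{eq:diff_h}) pins down $\sigma=1$. For these values one finds $\omega(a,\sigma)=1$, the hypergeometric factor in the formula of Proposition~\ref{thm:hn-plus} collapses to~$1$, and the unique square-summable solution of the three-term recurrence is exactly $k^n/(n+1)$. The case of $H^{(g)}$ is structurally identical: $w_n=\binom{2n+1}{n}\sqrt{n+1}/4^{n}$, $h_n=k^n/(n+2)$, $a=1$, $b=c=1/2$, $\sigma=2$, and again $\omega=1$. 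Positivity is evident from these explicit formulas, and the trace class property follows from Stirling's formula, since $w_n=O(1)$ and $\sum_n H^{(j)}_{n,n}$ converges geometrically.

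By Lemma~\ref{lem:diff_eq} each of $H^{(f)}$ and $H^{(g)}$ formally commutes with the corresponding Jacobi matrix~(\ref{eq:jacobi_mat}). A direct comparison shows that $-4kJ+\mu I$ with the appropriate constant $\mu$ coincides with the Family~\#1 Jacobi matrix (with an analogous statement for Family~\#2), so the simultaneous eigenvectors are supplied by the orthonormal Stieltjes-Carlitz polynomials $\hat F_n$ (resp.~$\hat G_n$). Because the underlying Hamburger moment problems are determinate, the Jacobi operators are essentially self-adjoint with simple pure-point spectrum, and $H^{(f)}$, $H^{(g)}$ inherit simplicity of spectrum together with the vectors $(\Psi_m^{(j)})_n=\hat F_n(-\lambda_m)$, resp.~$\hat G_n(-\lambda_m)$. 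The squared norms $\|\Psi_m^{(j)}\|^{2}=1/\mu(\{\lambda_m\})$ are then read off directly from~(\ref{eq:mu_carlitz1}) and~(\ref{eq:mu_carlitz2}).

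Finally, to obtain the eigenvalues I will transfer $H^{(f)}$ to a multiplication operator via the unitary $U$ of Section~\ref{sec:The-main-theorem}; its symbol is $h^{(f)}(x)=\sum_{n\geq 0}H^{(f)}_{n,0}\hat F_n(-x)$, and upon inserting the explicit forms of $w_n$, $h_n$, and $\hat F_n$ this series is exactly the left-hand side of~(\ref{eq:h1_sum_form-1}). That identity rewrites $h^{(f)}(x)$ as a multiple of $\int_0^K\cos(\sqrt{x}\,u)\cn(u)\,\text{d}u$, and the Fourier expansion~(\ref{eq:four_cn}) evaluated at $\lambda_m=\pi^2(2m+1)^2/(4K^2)$ delivers $\nu_m^{(f)}$ in closed form. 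The treatment of $H^{(g)}$ is structurally identical, using~(\ref{eq:h2_sum_form-1}) together with~(\ref{eq:four_dn}); the absence of $\lambda_0=0$ from the support of the measure in~(\ref{eq:mu_carlitz2}) is what forces the range $m\geq 1$ for $\nu_m^{(g)}$. The one point demanding genuine care, as opposed to routine bookkeeping, is tracking signs and normalization constants when matching the Stieltjes-Carlitz polynomials with Proposition~\ref{thm:asympt-fg}, so that the correction term $-\sqrt{1-k^2}\sin(\sqrt{x}K)/(k^2\sqrt{x})$ appearing in~(\ref{eq:h2_sum_form-1}) is absorbed correctly when evaluated on the spectrum.
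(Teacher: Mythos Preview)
Your proposal is correct and follows essentially the same route as the paper: identify the weight and the sequence $(h_n)$ via Lemma~\ref{lem:diff_eq} and Proposition~\ref{thm:hn-plus}, recognize the commuting Jacobi matrix as an affine image of the Family~\#1 (resp.~\#2) Jacobi matrix, and compute the eigenvalues by passing to the multiplication operator and invoking Proposition~\ref{thm:asympt-fg} together with the Fourier expansions~(\ref{eq:four_cn}), (\ref{eq:four_dn}). Two small bookkeeping points: the affine relation is $J^{(f)}=4kJ+(k^{2}+1)I$ (positive scalar, not $-4k$), and your trace-class argument as stated is circular, since the implication ``$\sum_n H_{n,n}<\infty\Rightarrow$ trace class'' uses positivity of the operator, which you only obtain after the diagonalization; the paper instead uses $w_n=O(1)$ together with $\sum_{m,n}k^{m+n}/(m+n+1)<\infty$ to get $\sum_{m,n}H_{m,n}^{(f)}<\infty$ directly.
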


\begin{proof} The basic scheme remains literally the same as explained
in the introductory part of the proof of Theorem~\ref{thm:hank3456}.
Comparing (\ref{eq:diff_h}) to (\ref{eq:three-term-eq}) we let,
in the latter equation,
\[
\xi=a,\ \eta=b+c+2.
\]
Recall also definition (\ref{eq:b-x_y_s}) of $\omega(\xi,\eta,\sigma)$.

(i)~~Consider the entries $\alpha_{n}$, $\beta_{n}$, as given
in (\ref{eq:alpha-beta-gen}), for the values of parameters
\[
a=0,\;b=1/2,\;c=-1/2,\;\sigma=1,\ \mbox{whence}\ \,\xi=0,\;\eta=2,\ \omega(\xi,\eta,\sigma)=1.
\]
Then the weight given in (\ref{eq:weight_gener}) equals
\[
w_{n}^{(f)}=\sqrt{\frac{\left(\frac{3}{2}\right)_{\!n}\left(\frac{1}{2}\right)_{\!n}}{n!\,(1)_{n}}}=\frac{\sqrt{2n+1}}{2^{2n}}\,\binom{2n}{n}.
\]
Referring to Proposition~\ref{thm:hn-plus}, we can choose for $h_{n}^{(f)}$
the square summable solution (\ref{eq:hn-plus}) of equation (\ref{eq:three-term-eq}),
\[
h_{n}^{(f)}=\frac{k^{n}\Gamma(n+1)}{\Gamma\big(n+2\big)}\,\HG{n+1,0}{n+2}{k^{2}}=\frac{k^{n}}{n+1}\,.
\]
One can check that $H_{m,n}^{(f)}=w_{m}^{(f)}w_{n}^{(f)}h_{m+n}^{(f)}$
coincides with (\ref{eq:def_hank1}). By Lemma~\ref{lem:diff_eq},
the weighted Hankel matrix $H^{(f)}$ commutes with $J=J^{(f)}$ introduced
in (\ref{eq:jacobi_mat}) where we put $\alpha_{n}=\alpha_{n}^{(f)}$,
$\beta_{n}=\beta_{n}^{(f)}$,
\[
\alpha_{n}^{(f)}:=-2k(n+1)\sqrt{(2n+1)(2n+3)}\,,\ \beta_{n}^{(f)}:=(k^{2}+1)(2n+1)^{2}.
\]
From (\ref{eq:recur_carlitz1}) it is seen that the Jacobi matrix
$J^{(f)}$ corresponds to the Family~\#1 of the Stieltjes-Carlitz
polynomials.

From the asymptotic expansion
\[
\binom{2n}{n}=\frac{4^{n}}{\sqrt{\pi n}}\left(1+O\!\left(\frac{1}{n}\right)\right)\ \text{as}\ n\to\infty
\]
it seen that $\sum_{m,n=0}^{\infty}H_{m,n}^{(f)}<\infty$ and therefore
the matrix $(H_{m,n}^{(f)})$ determines a trace class operator on
$\ell^{2}(\mathbb{N}_{0})$.

By the orthogonality relation (\ref{eq:og_rel_carlitz1}), the functions
\[
\hat{F}_{n}(x):=\frac{f_{n}(-x)}{k^{n}(2n)!\sqrt{2n+1}}\,,\ \,n\in\mathbb{N}_{0},
\]
form an orthonormal basis of the Hilbert space $L^{2}\left((0,\infty),\text{d}\mu\right)$
where $\mu$ is defined in (\ref{eq:mu_carlitz1}). With this orthonormal
basis on hand and with the canonical basis in $\ell^{2}(\mathbb{N}_{0})$
we can construct a unitary mapping $U$ which diagonalizes $J^{(f)}$
and, at the same time, $H^{(f)}$. $UH^{(f)}U^{-1}$ becomes a multiplication
operator by a function $h^{(f)}(x)$ on $L^{2}\left((0,\infty),\text{d}\mu\right)$.
In view of (\ref{eq:h1_sum_form-1}), we have the formula 
\[
h^{(f)}(x)=\sum_{n=0}^{\infty}H_{n,0}^{(f)}\hat{F}_{n}(x)=\sum_{n=0}^{\infty}\frac{f_{n}(-x)}{4^{n}n!(n+1)!}=\frac{4}{\pi}\int_{0}^{K}\cos(\sqrt{x}u)\cn(u)\,\text{d}u.
\]
This formula in combination with the Fourier series (\ref{eq:four_cn})
allows us to evaluate $h^{(f)}(x)$ at the spectral points $\lambda_{m}$
of the Jacobi matrix $J^{(f)}$, as given in (\ref{eq:mu_carlitz1}).
The obtained values read
\[
h^{(f)}(\lambda_{m})=\frac{4}{k}\,\frac{q^{m+1/2}}{1+q^{2m+1}}\,,\ \,m\geq0,
\]
and these are in fact the eigenvalues of $H^{(f)}$. Respective eigenvectors
and their norms can be derived exactly in the same way as described
in the beginning of the proof of Theorem~\ref{thm:hank3456}.

(ii)~~Now we consider the entries $\alpha_{n}$, $\beta_{n}$ in
(\ref{eq:alpha-beta-gen}) for the values of parameters
\[
a=1,\;b=1/2,\;c=1/2,\;\sigma=2,\ \mbox{whence}\ \,\xi=1,\;\eta=3,\ \omega(\xi,\eta,\sigma)=1.
\]
Then the weight given in (\ref{eq:weight_gener}) equals
\[
w_{n}^{(g)}=\frac{\left(\frac{3}{2}\right)_{\!n}}{\sqrt{n!\,(2)_{n}}}=\frac{\sqrt{n+1}}{2^{2n}}\,\binom{2n+1}{n},
\]
and the square summable solution (\ref{eq:hn-plus}) of equation (\ref{eq:three-term-eq}),
as described in Proposition~\ref{thm:hn-plus}, equals
\[
h_{n}^{(g)}=\frac{k^{n}\Gamma(n+2)}{\Gamma\big(n+3\big)}\,\HG{n+2,0}{n+3}{k^{2}}=\frac{k^{n}}{n+2}\,.
\]
Thus $H_{m,n}^{(g)}=w_{m}^{(g)}w_{n}^{(g)}h_{m+n}^{(g)}$ coincides
with (\ref{eq:def_hank2}). By Lemma~\ref{lem:diff_eq}, the weighted
Hankel matrix $H^{(g)}$ commutes with $J=J^{(g)}$ introduced in
(\ref{eq:jacobi_mat}) where we let
\[
\alpha_{n}^{(g)}:=-2k(2n+3)\sqrt{(n+1)(n+2)}\,,\ \beta_{n}^{(g)}:=(k^{2}+1)(2n+2)^{2}.
\]
From (\ref{eq:recur_carlitz2}) it is seen that the Jacobi matrix
$J^{(g)}$ corresponds to the Family~\#2 of the Stieltjes-Carlitz
polynomials.

Similarly as in the forgoing case one can argue that $\sum_{m,n=0}^{\infty}H_{m,n}^{(g)}<\infty$
and therefore $H^{(g)}$ is a trace class operator.

In view of (\ref{eq:og_rel_carlitz2}), the functions
\[
\hat{G}_{n}(x):=\frac{g_{n}(-x)}{k^{n}(2n+1)!\sqrt{n+1}}\,,\ \,n\in\mathbb{N}_{0},
\]
form an orthonormal basis in $L^{2}\left((0,\infty),\text{d}\mu\right)$
where $\mu$ is defined in (\ref{eq:mu_carlitz2}). Using $\{\hat{G}_{n}\}$
we can again construct a common eigenbasis for both $J^{(g)}$ and
$H^{(g)}$ and, consequently, a unitary mapping $U$ such that $UJ^{(g)}U^{-1}$
is a multiplication operator by $x$ and $UH^{(g)}U^{-1}$ is a multiplication
operator by a function $h^{(g)}(x)$, both acting on $L^{2}\left((0,\infty),\text{d}\mu\right)$.
According to (\ref{eq:h2_sum_form-1}), we have
\begin{eqnarray*}
h^{(g)}(x) & = & \sum_{n=0}^{\infty}H_{n,0}^{(g)}\hat{G}_{n}(x)\,=\,\sum_{n=0}^{\infty}\frac{g_{n}(-x)}{4^{n}n!(n+2)!}\\
 & = & -\frac{8\sqrt{1-k^{2}}}{\pi k^{2}}\frac{\sin(\sqrt{x}K)}{\sqrt{x}}+\frac{8}{\pi k^{2}}\int_{0}^{K}\cos(\sqrt{x}u)\dn(u)\,\text{d}u.
\end{eqnarray*}
With the aid of this formula and (\ref{eq:four_dn}) we can evaluate
$h^{(g)}(x)$ at the spectral points $\lambda_{m}$ of the Jacobi
matrix $J^{(g)}$, as given in (\ref{eq:mu_carlitz2}). The obtained
values read
\[
h^{(g)}(\lambda_{m})=\frac{8}{k^{2}}\,\frac{q^{m}}{1+q^{2m}}\,,\ \,m\geq1,
\]
and these are in fact the eigenvalues of $H^{(g)}$. Note that in
this case, too, $\lambda_{0}=0$ is not an eigenvalue of $J^{(g)}$
and $h^{(g)}(0)$ is not an eigenvalue of $H^{(g)}$.

Respective eigenvectors and their norms can be derived as in the forgoing
cases. \end{proof}

\section{Some more weighted Hankel matrices\label{sec:Some-more-weighted}}

Another set of explicitly diagonalizable weighted Hankel matrices
can be obtained by permuting the parameters $a$, $b$, and $c$.
In fact, note that permuting $a$, $b$, and $c$ does not change
the Jacobi matrix defined in (\ref{eq:jacobi_mat}), (\ref{eq:alpha-beta-gen}),
but this need not be the case for the weight $w_{n}$ defined in (\ref{eq:weight_gener}).
Hence while keeping $J$ fixed we can get a new weighted Hankel matrix
lying in the commutant of $J$.

First we apply this observation to the Jacobi matrices $J^{(q)}$
and $J^{(s)}$ corresponding to Families \#4 and \#6 of the Stieltjes-Carlitz
polynomials, respectively. In both cases we can put $a=b=1/2$, $c=0$
(compare (\ref{eq:alpha-beta-gen}) to (\ref{eq:alpha-beta-symm})).
Then the weight in (\ref{eq:weight_gener}) is trivial, $w_{n}=1$.
Another possible choice of the parameters, $a=0$, $b=c=1/2$, leads
to a nontrivial weight but with the Jacobi matrix remaining untouched.
The result is described in detail in Theorem~\ref{thm:hank4a6a}
below.

Naturally, one can attempt to apply the same procedure to the Jacobi
matrices $J^{(p)}$ and $J^{(r)}$ corresponding to the Families \#3
and \#5 of the Stieltjes-Carlitz polynomials, respectively. Now we
put, in both cases, $a=b=-1/2$, $c=0$, and consequently we have
$w_{n}=1$. Unfortunately, the permutation $a=0$, $b=c=-1/2$, does
not yield  results of notable interest. As far as $J^{(p)}$ is concerned,
we were not able, for the moment, to evaluate eigenvalues of the newly
obtained weighted Hankel matrix explicitly. As for $J^{(r)}$, the
new weighted Hankel matrix turns out to be of rank $1$ and therefore
of little interest.

Let us introduce another couple of weighted Hankel matrices,
\begin{eqnarray*}
H_{m,n}^{(q')} & := & (2m+1)(2n+1)\binom{2m}{m}\binom{2n}{n}\!\left(\frac{k}{4}\right)^{\!m+n}\frac{1+(1-k^{2})(m+n+1)}{(m+n+1)(m+n+2)}\,,\\
\noalign{\smallskip}H_{m,n}^{(s')} & := & \binom{2m}{m}\binom{2n}{n}\!\left(\frac{k}{4}\right)^{m+n}\!\frac{(2m+1)(2n+1)}{(m+n+1)(m+n+2)}\,,
\end{eqnarray*}
$m,n\in\mathbb{N}_{0}$.

\begin{theorem}\label{thm:hank4a6a} The weighted Hankel matrices
$H^{(q')}$ and $H^{(s')}$ represent both positive trace class operators
on $\ell^{2}(\mathbb{N}_{0})$ with simple eigenvalues. We have:

\noindent (i)~~Eigenvalues of $H^{(q')}$ enumerated in descending
order are
\[
{\displaystyle \nu_{m}^{(q')}=\frac{2\pi}{kK}\,\frac{(2m+1)q^{m+1/2}}{1-q^{2m+1}}}\,,\ m\geq0.
\]
\noindent For an eigenvector corresponding to $\nu_{m}^{(q')}$ one
can choose the vector $\Psi_{m}^{(q)}$, as introduced in Theorem~\ref{thm:hank3456}.

\noindent (ii)~~Eigenvalues of $H^{(s')}$ enumerated in descending
order are
\[
{\displaystyle \nu_{m}^{(s')}=\frac{4\pi}{k^{2}K}\,\frac{mq^{m}}{1-q^{2m}}}\,,\ m\geq1.
\]
\noindent For an eigenvector corresponding to $\nu_{m}^{(s')}$ one
can choose the vector $\Psi_{m}^{(s)}$, as introduced in Theorem~\ref{thm:hank3456}.
\end{theorem}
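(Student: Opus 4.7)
The plan is to follow the scheme of the proof of Theorem \ref{thm:hank12}. The new matrices arise from the parameter permutation $a=0$, $b=c=1/2$ of the choice $a=b=1/2$, $c=0$ used in Theorem \ref{thm:hank3456}. Since $\alpha_{n}$ in (\ref{eq:alpha-beta-gen}) is symmetric in $(a,b,c)$, the Jacobi matrix is unaffected, but the weight (\ref{eq:weight_gener}) becomes nontrivial, $w_{n}=(3/2)_{n}/n!=(2n+1)\binom{2n}{n}/4^{n}$. Taking $\sigma=(1+2k^{2})/(1+k^{2})$ for $q'$ and $\sigma=(2+k^{2})/(1+k^{2})$ for $s'$ reproduces the same Jacobi matrices $J^{(q)}$ and $J^{(s)}$ as in Theorem \ref{thm:hank3456}.

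Next, Proposition \ref{thm:hn-plus} with $\xi=0$, $\eta=3$ gives the unique square-summable solutions of the recurrence (\ref{eq:diff_h}). One computes $\omega=1$ for $q'$ and $\omega=2$ for $s'$, after which the hypergeometric functions collapse via Euler's transformation and the elementary identities $\,_2F_{1}(-1,b;c;z)=1-(b/c)z$ and $\,_2F_{1}(a,b;a;z)=(1-z)^{-b}$, giving
\[
h_{n}^{(q')}=\frac{k^{n}\bigl(1+(n+1)(1-k^{2})\bigr)}{(n+1)(n+2)},\qquad h_{n}^{(s')}=\frac{k^{n}}{(n+1)(n+2)}.
\]
A direct substitution confirms that $w_{m}w_{n}h_{m+n}^{(j')}$ reproduces the entries of the theorem. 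Lemma \ref{lem:diff_eq} then supplies formal commutation with $J^{(q)}$ resp.\ $J^{(s)}$, and trace-class membership follows from $\sum_{m,n}H_{m,n}^{(j')}<\infty$ by the asymptotics $w_{n}\sim 2\sqrt{n/\pi}$ and $h_{n}^{(j')}\sim k^{n}/n^{2}$. Simplicity of the spectra of $J^{(q)}$ and $J^{(s)}$ then forces $\Psi_{m}^{(q)}$ and $\Psi_{m}^{(s)}$ to be eigenvectors of $H^{(q')}$ and $H^{(s')}$.

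It remains to evaluate the eigenvalues $\nu_{m}^{(j')}=\sum_{n}H_{n,0}^{(j')}\hat{R}_{n}(\lambda_{m})$, where $\hat{R}_{n}$ denotes $\hat{Q}_{n}$ or $\hat{S}_{n}$. I first establish the elliptic integral representations $h_{n}^{(q')}=2k^{n}\int_{0}^{K}\sn^{2n+1}(u)\cn(u)\dn^{3}(u)\,\mathrm{d}u$ (via the substitution $t=\cn^{2}(u)/\dn^{2}(u)$ in the Euler representation (\ref{eq:gauss_hyp_int_repre}) of the $\,_2F_{1}$) and $h_{n}^{(s')}=2k^{n}\int_{0}^{K}\sn^{2n+1}(u)\cn^{3}(u)\dn(u)\,\mathrm{d}u$ (via $t=\sn^{2}(u)$ in the beta integral). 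Plugging into $\nu_{m}^{(j')}$ and swapping summation with integration, I will absorb the leftover factor $\binom{2n}{n}/4^{n}$ using the Wallis identity $\binom{2n}{n}/4^{n}=(2/\pi)\int_{0}^{\pi/2}\sin^{2n}\theta\,\mathrm{d}\theta$, evaluate the resulting inner sum by $u$-differentiating the generating functions (\ref{eq:gener_func_carlitz4}) and (\ref{eq:gener_func_carlitz6}), and then reduce the ensuing double integral to a single elliptic integral by the change of variable $\phi=\sqrt{t}\sin\theta$ followed by $\phi=\sn(u)$. After one integration by parts using (\ref{eq:deriv_sn_cn_dn}), (\ref{eq:sn_cn_dn_0}), (\ref{eq:sn_cn_dn_K}), the eigenvalue functions are expected to take the form
\[
h^{(q')}(x)=-\tfrac{4}{\pi}\cos(\sqrt{x}K)+\tfrac{4}{\pi}\!\int_{0}^{K}\!\cos(\sqrt{x}u)\cn(u)\dn(u)\,\mathrm{d}u,
\]
\[
h^{(s')}(x)=\tfrac{4\sin(\sqrt{x}K)}{\pi\sqrt{x}}-\tfrac{4}{\pi}\!\int_{0}^{K}\!\sn^{2}(u)\cos(\sqrt{x}u)\,\mathrm{d}u.
\]
At the spectral points one has $\sqrt{\lambda_{m}}K=(2m+1)\pi/2$ for $q'$ and $m\pi$ for $s'$, so the boundary terms vanish; the remaining Fourier integrals are then evaluated through (\ref{eq:four_sn}) (after one further integration by parts using $\cn\dn=(\sn)'$) and (\ref{eq:four_sn_squared}), producing the stated eigenvalues. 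The eigenvector norms coincide with those listed in Theorem \ref{thm:hank3456}.

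The principal obstacle I anticipate is the $H^{(q')}$ case: differentiating (\ref{eq:gener_func_carlitz4}) introduces integrand factors $1/\cn(u)$ and $1/\dn(u)$, and these singularities must cancel through the $\phi$-substitution and the subsequent integration by parts. A useful simplification is the decomposition $h_{n}^{(q')}=(1-k^{2})k^{n}/(n+2)+h_{n}^{(s')}$, which splits the task into the $s'$ computation plus a secondary one with $\tilde{h}_{n}=k^{n}/(n+2)=2k^{n}\int_{0}^{K}\sn^{2n+3}(u)\cn(u)\dn(u)\,\mathrm{d}u$ handled by the same machinery; the two contributions then recombine using the vanishing of $\cos(\sqrt{\lambda_{m}}K)$.
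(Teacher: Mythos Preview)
Your setup (the parameter swap $a=0$, $b=c=1/2$, the weight $w_{n}=(3/2)_{n}/n!$, the closed forms for $h_{n}^{(q')}$ and $h_{n}^{(s')}$, Lemma~\ref{lem:diff_eq} for commutation, and the trace-class argument) is exactly what the paper does. For $H^{(s')}$ your expected eigenvalue function also coincides with the paper's, and the Fourier evaluation via (\ref{eq:four_sn_squared}) is the same.

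For $H^{(q')}$, however, your route through the Wallis identity, double integrals, and the \emph{differentiated} generating function is an unnecessary detour, and your anticipated formula is off: the correct identity is
\[
h^{(q')}(x)=\frac{4}{\pi}\int_{0}^{K}\cos(\sqrt{x}\,u)\,\cn(u)\dn(u)\,\mathrm{d}u,
\]
with no boundary term $-\tfrac{4}{\pi}\cos(\sqrt{x}K)$. The paper avoids the whole Wallis machinery by noticing that the \emph{product} $w_{n}h_{n}^{(q')}/k^{n}=\tfrac{(3/2)_{n}}{n!}\bigl(\tfrac{1}{n+1}-\tfrac{k^{2}}{n+2}\bigr)$ is itself a beta integral: using $\frac{(3/2)_{n}}{(n+1)!}=\frac{4}{\pi}\int_{0}^{1}\frac{t^{2n+2}}{\sqrt{1-t^{2}}}\,\mathrm{d}t$ and the analogous formula with $(n+2)!$, the substitution $t=\sn(u)$ gives
\[
w_{n}h_{n}^{(q')}/k^{n}=\frac{4}{\pi}\int_{0}^{K}\bigl[(1+k^{2})\sn^{2n+2}(u)-2k^{2}\sn^{2n+4}(u)\bigr]\dn(u)\,\mathrm{d}u.
\]
Now the \emph{undifferentiated} generating function (\ref{eq:gener_func_carlitz4}) applies directly---the factor $\dn(u)$ cancels the $1/\dn(u)$ there---yielding
\[
h^{(q')}(x)=\frac{4}{\pi}\int_{0}^{K}\frac{\sin(\sqrt{x}\,u)}{\sqrt{x}}\bigl[(1+k^{2})\sn(u)-2k^{2}\sn^{3}(u)\bigr]\mathrm{d}u,
\]
which one then evaluates at $\lambda_{m}$ using the Fourier series (\ref{eq:four_sn}) and (\ref{eq:four_sn_cubed}); no integration by parts, no singular $1/\cn$ or $1/\dn$ factors, and no double integral. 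Your spurious boundary term does vanish at the spectral points, so your eigenvalues are still correct, but the intermediate formula is wrong and signals that the Wallis/double-integral programme would misfire somewhere in the execution. The paper's shortcut is both simpler and safer.
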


\begin{proof} The proof again follows the same scheme as in the proof
of Theorem~\ref{thm:hank3456} or Theorem~\ref{thm:hank12}. Most
steps are quite routine and thus we confine ourselves to pointing
out only some features which are particular for the matrices in question.

In both cases we have $\sum_{m,n=0}^{\infty}H_{m,n}<\infty$ implying
that the matrices represent trace class operators.

As already mentioned, in both cases we let $a=0$, $b=c=1/2$. Hence,
comparing (\ref{eq:diff_h}) to (\ref{eq:three-term-eq}), we have
to put $\xi=a=0$, $\eta=b+c+2=3$. Also the weight (\ref{eq:weight_gener})
is the same in both cases,
\[
w_{n}=\frac{1}{n!}\left(\frac{3}{2}\right)_{\!n}=\frac{2n+1}{2^{2n}}\,\binom{2n}{n}.
\]

(i)~~The choice of $\sigma$ corresponds to the Jacobi matrix $J^{(q)}$,
see the proof of Theorem~\ref{thm:hank3456} ad (ii). We have
\[
\sigma=\frac{1+2k^{2}}{1+k^{2}},\ \mbox{whence}\ \,\omega(\xi,\eta,\sigma)=1,
\]
see (\ref{eq:b-x_y_s}). The square summable solution (\ref{eq:hn-plus})
of equation (\ref{eq:three-term-eq}) equals
\[
h_{n}^{(q')}=\frac{k^{n}}{n+1}\,\HG{n+1,-1}{n+2}{k^{2}}=k^{n}\,\frac{n+2-(n+1)k^{2}}{(n+1)(n+2)}\,.
\]

$H^{(q')}$ can be diagonalized by the same unitary transform as the
matrix $H^{(q)}$ which is described in the proof of Theorem~\ref{thm:hank3456}
ad (ii). Using some routine manipulations, quite similarly as in the
proof of Proposition~\ref{thm:sum-SCpoly-3456}, one can transform
$H^{(q')}$ to a multiplication operator by the function
\begin{eqnarray*}
h^{(q')}(x) & = & \sum_{n=0}^{\infty}H_{n,0}^{(q')}\hat{Q}_{n}(x)\,=\,\sum_{n=0}^{\infty}\frac{(3/2)_{n}}{n!}\left[\frac{1-k^{2}}{n+1}+\frac{k^{2}}{(n+1)(n+2)}\right]\frac{(-1)^{n}q_{n}(x)}{(2n+1)!}\\
 & = & \frac{4}{\pi}\int_{0}^{K}\frac{\sin(\sqrt{x}u)}{\sqrt{x}}\left[(1+k^{2})\sn(u)-2k^{2}\sn^{3}(u)\right]\text{d}u.
\end{eqnarray*}

This formula in combination with the Fourier series (\ref{eq:four_sn})
and (\ref{eq:four_sn_cubed}) makes it possible to evaluate $h^{(q')}(x)$
at the spectral points $\lambda_{m}$ of the Jacobi matrix $J^{(q)}$,
as given in (\ref{eq:mu_carlitz34}). The obtained values read
\[
h^{(q')}(\lambda_{m})=\frac{2\pi}{kK}\frac{(2m+1)q^{m+1/2}}{1-q^{2m+1}}\,,\ \,m\geq0,
\]
and these are in fact the eigenvalues of $H^{(q')}$.

(ii)~~The choice of $\sigma$ corresponds to the Jacobi matrix $J^{(s)}$,
see the proof of Theorem~\ref{thm:hank3456} ad (iv). We have
\[
\sigma=\frac{2+k^{2}}{1+k^{2}},\ \mbox{whence}\ \,\omega(\xi,\eta,\sigma)=2,
\]
see (\ref{eq:b-x_y_s}). The square summable solution (\ref{eq:hn-plus})
of equation (\ref{eq:three-term-eq}) equals
\[
h_{n}^{(s')}=\frac{k^{n}}{(n+1)(n+2)}\,\HG{n+1,0}{n+3}{k^{2}}=\frac{k^{n}}{(n+1)(n+2)}\,.
\]

$H^{(s')}$ can be diagonalized by the same unitary transform as the
matrix $H^{(s)}$ which is described in the proof of Theorem~\ref{thm:hank3456}
ad (iv). Using some routine manipulations, similarly as in the proof
of Proposition~\ref{thm:sum-SCpoly-3456}, one can transform $H^{(s')}$
to a multiplication operator by the function
\begin{eqnarray*}
h^{(s')}(x) & = & \sum_{n=0}^{\infty}H_{n,0}^{(s')}\hat{S}_{n}(x)\,=\,\sum_{n=0}^{\infty}\frac{(3/2)_{n}}{(n+2)!}\frac{(-1)^{n}s_{n}(x)}{(2n+1)!}\\
 & = & \frac{4}{\pi}\!\left(\frac{\sin(\sqrt{x}K)}{\sqrt{x}}-\int_{0}^{K}\cos(\sqrt{x}u)\sn^{2}(u)\,\text{d}u\right)\!.
\end{eqnarray*}

This formula in combination with the Fourier series (\ref{eq:four_sn_squared})
makes it possible to evaluate $h^{(s')}(x)$ at the spectral points
$\lambda_{m}$ of the Jacobi matrix $J^{(s)}$, as given in (\ref{eq:mu_carlitz56}).
The obtained values read
\[
h^{(s')}(\lambda_{m})=\frac{4\pi}{k^{2}K}\,\frac{mq^{m}}{1-q^{2m}}\,,\ \,m\geq1,
\]
and these are in fact the eigenvalues of $H^{(s')}$. \end{proof}

Finally we consider permutations of the parameters $a$, $b$ and
$c$ in case of the matrices $H^{(f)}$ and $H^{(g)}$ from Theorem~\ref{thm:hank12}.
As for $H^{(f)}$, the original values were $a=0$, $b=1/2$, $c=-1/2$.
Permuting $a$, $b$, $c$ in (\ref{eq:weight_gener}) provides us
with two new weights. In case of $H^{(g)}$ we have $a=1$, $b=c=1/2$,
and permuting $a$, $b$, $c$ leads to just one new weight.

These considerations lead us to introducing three weighted Hankel
matrices
\begin{eqnarray*}
H_{m,n}^{(f')} & := & \frac{k^{m+n}\Gamma(m+n+3/2)}{\sqrt{(2m+1)(2n+1)}(m+n+1)!}\,\HG{m+n+3/2,1/2}{m+n+2}{k^{2}}\\
 & = & \frac{2k^{m+n}}{\sqrt{\pi(2m+1)(2n+1)}}\int_{0}^{1}\frac{x^{2m+2n+2}\,\text{d}x}{\sqrt{(1-x^{2})(1-k^{2}x^{2})}}\,,
\end{eqnarray*}
\begin{eqnarray*}
H_{m,n}^{(f'')} & := & \frac{k^{m+n}\sqrt{(2m+1)(2n+1)}\Gamma(m+n+1/2)}{(m+n+1)!}\,\HG{m+n+1/2,-1/2}{m+n+2}{k^{2}}\\
\noalign{\smallskip} & = & \frac{4k^{m+n}\sqrt{(2m+1)(2n+1)}}{\sqrt{\pi}}\int_{0}^{1}x^{2m+2n}\sqrt{(1-x^{2})(1-k^{2}x^{2})}\,\text{d}x,
\end{eqnarray*}
\begin{eqnarray*}
H_{m,n}^{(g')} & := & \frac{k^{m+n}\sqrt{(m+1)(n+1)}\Gamma(m+n+3/2)}{(m+n+2)!}\,\HG{m+n+3/2,-1/2}{m+n+3}{k^{2}}\\
\noalign{\smallskip} & = & \frac{4k^{m+n}\sqrt{(m+1)(n+1)}}{\sqrt{\pi}}\int_{0}^{1}x^{2m+2n+2}\sqrt{(1-x^{2})(1-k^{2}x^{2})}\,\text{d}x,
\end{eqnarray*}
$m,n\in\mathbb{N}_{0}$. Here we have again used the integral representation
(\ref{eq:gauss_hyp_int_repre}).

\begin{theorem}\label{thm:hank1ab2a} Each of the weighted Hankel
matrices $H^{(f')}$, $H^{(f'')}$ and $H^{(g')}$ represents a positive
trace class operator on $\ell^{2}(\mathbb{N}_{0})$ with simple eigenvalues.
We have:

\noindent (i)~~Eigenvalues of $H^{(f')}$ enumerated in descending
order are
\[
{\displaystyle \nu_{m}^{(f')}=\frac{4K}{\sqrt{\pi}k}\,\frac{q^{m+1/2}}{(2m+1)(1-q^{2m+1})}}\,,\ m\geq0.
\]
\noindent For an eigenvector corresponding to $\nu_{m}^{(f')}$ one
can choose the vector $\Psi_{m}^{(f)}$, as introduced in Theorem~\ref{thm:hank12}
ad (i).

\noindent (ii)~~Eigenvalues of $H^{(f'')}$ enumerated in descending
order are
\[
{\displaystyle \nu_{m}^{(f'')}=\frac{2\pi^{3/2}}{kK}\,\frac{(2m+1)q^{m+1/2}}{1-q^{2m+1}}}\,,\ m\geq0.
\]
\noindent For an eigenvector corresponding to $\nu_{m}^{(f'')}$
one can choose the vector $\Psi_{m}^{(f)}$, as introduced in Theorem~\ref{thm:hank12}
ad (i).

\noindent (iii)~~Eigenvalues of $H^{(g')}$ enumerated in descending
order are
\[
{\displaystyle \nu_{m}^{(g')}=\frac{2\pi^{3/2}}{k^{2}K}\,\frac{mq^{m}}{1-q^{2m}}}\,,\ m\geq1.
\]
\noindent For an eigenvector corresponding to $\nu_{m}^{(g')}$ one
can choose the vector $\Psi_{m}^{(g)}$, as introduced in Theorem~\ref{thm:hank12}
ad (ii). \end{theorem}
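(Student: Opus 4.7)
The proof will follow the template established in Theorems \ref{thm:hank3456}, \ref{thm:hank12}, and \ref{thm:hank4a6a}, exploiting the key observation that permuting $a$, $b$, $c$ in (\ref{eq:alpha-beta-gen}) leaves the Jacobi matrix $J$ untouched while changing the weight (\ref{eq:weight_gener}). For each of the three matrices I would first identify the appropriate parameter triple. Starting from the original choices $a=0,\,b=1/2,\,c=-1/2,\,\sigma=1$ for $H^{(f)}$ (which produces $J^{(f)}$), the permutation $a=1/2,\,b=0,\,c=-1/2$ yields $\xi=1/2,\,\eta=3/2$, and via (\ref{eq:b-x_y_s}) one finds $\omega=1/2$; plugging into Proposition~\ref{thm:hn-plus} reproduces precisely the scalar $h_{m+n}$ factor of $H^{(f')}$, and the weight computed from (\ref{eq:weight_gener}) is $w_n=1/\sqrt{2n+1}$. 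Similarly, $a=-1/2,\,b=1/2,\,c=0$ gives $H^{(f'')}$ with $w_n=\sqrt{2n+1}$ and $\omega=3/2$, while permuting the original triple for $H^{(g)}$ to $a=1/2,\,b=1,\,c=1/2$ gives $H^{(g')}$ with $w_n=\sqrt{n+1}$ and $\omega=3/2$. In all three cases the commutation with $J^{(f)}$ or $J^{(g)}$ follows directly from Lemma~\ref{lem:diff_eq}, and trace-class plus positivity follows from $\sum_{m,n}H_{m,n}<\infty$ (verified via the integral representations).

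Since $J^{(f)}$ and $J^{(g)}$ are unchanged, I would reuse the unitary maps $U$ from the proof of Theorem~\ref{thm:hank12}, built from the orthonormal bases $\{\hat F_n\}$ and $\{\hat G_n\}$. The simplicity and finiteness arguments (eigenvectors are $\Psi^{(f)}_m,\Psi^{(g)}_m$ again) then follow as in the earlier theorems. The eigenvalues are the values $h^{(\cdot)}(\lambda_m)$ of the multiplier function $h^{(\cdot)}(x)=\sum_{n}H_{n,0}^{(\cdot)}\hat F_n(x)$ (resp.\ $\hat G_n(x)$). Substituting the integral representations of the hypergeometric factors (from the second equalities in the definitions of $H^{(f')}$, $H^{(f'')}$, $H^{(g')}$) and changing variables $x=\sn(u)$ converts these into power-of-$\sn$ integrals weighted by $\cn^{a}\dn^{b}$; the justification of the interchange of sum and integral is provided by Proposition~\ref{thm:asympt-SC-poly-fg} combined with (mild generalizations of) Lemma~\ref{thm:asympt-int-sn-cn-dn}, exactly as in Proposition~\ref{thm:sum-SCpoly-3456}.

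The crux is then an explicit evaluation of the resulting sums. For $H^{(f')}$ the generating function (\ref{eq:gener_func_carlitz1}) multiplied by $\sn(u)$ yields directly $h^{(f')}(x)=(2/\sqrt{\pi})\int_0^K\sn(u)\sin(\sqrt{x}\,u)/\sqrt{x}\,\text{d}u$. For $H^{(f'')}$ the exponent in the integrand forces the use of the $u$-derivative of (\ref{eq:gener_func_carlitz1}), giving $\sum f_n(-x)\sn^{2n}(u)/(2n)!=\cos(\sqrt{x}u)/(\cn(u)\dn(u))$, whence after cancellation $h^{(f'')}(x)=(4/\sqrt{\pi})\int_0^K\cos(\sqrt{x}u)\cn(u)\dn(u)\text{d}u$, which integration by parts (using $\cn\dn=\text{d}\sn/\text{d}u$ and (\ref{eq:sn_cn_dn_0}), (\ref{eq:sn_cn_dn_K})) reduces to $\cos(\sqrt{x}K)+\sqrt{x}\int_0^K\sn(u)\sin(\sqrt{x}u)\,\text{d}u$. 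For $H^{(g')}$ multiplying (\ref{eq:gener_func_carlitz2}) by $\sn(u)\cn^2(u)\dn^2(u)$ produces $h^{(g')}(x)=(4/\sqrt{\pi})\int_0^K\sn(u)\cn(u)\dn(u)\sin(\sqrt{x}u)/\sqrt{x}\,\text{d}u$; using $\sn\cn\dn=\tfrac12\text{d}\sn^2/\text{d}u$ and integration by parts gives $(2/\sqrt{\pi})\left[\sin(\sqrt{x}K)/\sqrt{x}-\int_0^K\cos(\sqrt{x}u)\sn^2(u)\,\text{d}u\right]$.

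The last step is to evaluate these closed forms at the spectral points $\lambda_m$. Noting that $\sqrt{\lambda_m}K=\pi(2m+1)/2$ for Family~\#1 kills $\cos(\sqrt{\lambda_m}K)$, while $\sqrt{\lambda_m}K=\pi m$ for Family~\#2 kills $\sin(\sqrt{\lambda_m}K)$ for $m\geq1$ (and shows why $\lambda_0=0$ is not eigenvalue for the $g$-case). The remaining Fourier coefficients are extracted by orthogonality from (\ref{eq:four_sn}) for the $\sn$-integrals and from (\ref{eq:four_sn_squared}) for the $\sn^2$-integral, producing the stated values of $\nu_m^{(f')}, \nu_m^{(f'')}, \nu_m^{(g')}$. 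The main technical obstacle I foresee is not conceptual but bookkeeping: choosing the right differentiation/integration-by-parts manipulation in each case so that the integrand becomes one of the four Fourier series already at our disposal; with the correct choice the computation is entirely parallel to those of Theorem~\ref{thm:hank4a6a}.
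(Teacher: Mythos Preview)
Your proposal is correct and follows essentially the same route as the paper: the same parameter permutations (up to harmless swaps of $b$ and $c$), the same weights, the same use of Lemma~\ref{lem:diff_eq} and Proposition~\ref{thm:hn-plus}, the same reduction via the generating functions (\ref{eq:gener_func_carlitz1})--(\ref{eq:gener_func_carlitz2}) to $\sn$- and $\sn^{2}$-integrals, and the same evaluation at $\lambda_{m}$ through (\ref{eq:four_sn}) and (\ref{eq:four_sn_squared}). In fact your constant $2/\sqrt{\pi}$ in the closed form for $h^{(g')}(x)$ is the correct one (the paper's displayed $1/\sqrt{\pi}$ is a typo that does not propagate to the stated eigenvalues).
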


\begin{proof} Similarly as in the proof of Theorem~\ref{thm:hank4a6a}
we confine ourselves to pointing out only some features which are
particular for the matrices treated in this theorem. Otherwise the
basic scheme is still the same as in the proof of Theorem~\ref{thm:hank3456}
or Theorem~\ref{thm:hank12}.

It is straightforward to verify in each of the three cases that $\sum_{m,n=0}^{\infty}H_{m,n}<\infty$
which implies that the matrices represent trace class operators.

(i)~~In this case $a=1/2$, $b=0$ and $c=-1/2$. The choice of
$\sigma=1$ corresponds to the Jacobi matrix $J^{(f)}$, see the proof
of Theorem~\ref{thm:hank12} ad (i). We put $\xi=a=1/2$, $\eta=b+c+2=3/2$,
and then $\omega(\xi,\eta,\sigma)=1/2$, see (\ref{eq:b-x_y_s}).
The weight (\ref{eq:weight_gener}) equals
\[
w_{n}^{(f')}=\sqrt{\frac{(1)_{n}\left(\frac{1}{2}\right)_{\!n}}{n!\left(\frac{3}{2}\right)_{\!n}}}=\frac{1}{\sqrt{2n+1}}\,.
\]
Furthermore, the square summable solution (\ref{eq:hn-plus}) of equation
(\ref{eq:three-term-eq}) equals
\[
h_{n}^{(f')}=\frac{k^{n}\Gamma(n+3/2)}{(n+1)!}\,\HG{n+3/2,1/2}{n+2}{k^{2}}=\frac{2k^{n}}{\sqrt{\pi}}\int_{0}^{1}\frac{x^{2n+2}\,\text{d}x}{\sqrt{(1-x^{2})(1-k^{2}x^{2})}}\,.
\]

$H^{(f')}$ can be diagonalized by the same unitary transform as the
matrix $H^{(f)}$ which is described in the proof of Theorem~\ref{thm:hank12}
ad (i). Using some routine manipulations, quite similarly as in the
proof of Proposition~\ref{thm:sum-SCpoly-3456}, one can transform
$H^{(f')}$ to a multiplication operator by the function
\begin{eqnarray*}
h^{(f')}(x) & = & \sum_{n=0}^{\infty}H_{n,0}^{(f')}\hat{F}_{n}(x)\,=\,\frac{2}{\sqrt{\pi}}\sum_{n=0}^{\infty}\frac{f_{n}(-x)}{(2n+1)!}\int_{0}^{K}\sn^{2n+2}(u)\,\text{d}u\\
 & = & \frac{2}{\sqrt{\pi}}\int_{0}^{K}\frac{\sin(\sqrt{x}u)}{\sqrt{x}}\sn(u)\,\text{d}u.
\end{eqnarray*}

This formula in combination with the Fourier series (\ref{eq:four_sn})
makes it possible to evaluate $h^{(f')}(x)$ at the spectral points
$\lambda_{m}$ of the Jacobi matrix $J^{(f)}$, as given in (\ref{eq:mu_carlitz1}).
The obtained values read
\[
h^{(f')}(\lambda_{m})=\frac{4K}{\sqrt{\pi}k}\,\frac{q^{m+1/2}}{(2m+1)(1-q^{2m+1})}\,,\ \,m\geq0,
\]
and these are in fact the eigenvalues of $H^{(f')}$.

(ii)~~In this case $a=-1/2$, $b=0$ and $c=1/2$. The choice of
$\sigma=1$ again corresponds to the Jacobi matrix $J^{(f)}$, see
the proof of Theorem~\ref{thm:hank12} ad (i). We put $\xi=a=-1/2$,
$\eta=b+c+2=5/2$, and then $\omega(\xi,\eta,\sigma)=3/2$, see (\ref{eq:b-x_y_s}).
The weight (\ref{eq:weight_gener}) equals
\[
w_{n}^{(f'')}=\sqrt{\frac{(1)_{n}\left(\frac{3}{2}\right)_{\!n}}{n!\left(\frac{1}{2}\right)_{\!n}}}=\sqrt{2n+1}.
\]
Furthermore, the square summable solution (\ref{eq:hn-plus}) of equation
(\ref{eq:three-term-eq}) equals
\begin{eqnarray*}
h_{n}^{(f'')} & = & \frac{k^{n}\Gamma(n+1/2)}{(n+1)!}\,\HG{n+1/2,-1/2}{n+2}{k^{2}}\\
 & = & \frac{4k^{n}}{\sqrt{\pi}}\int_{0}^{1}x^{2n}\sqrt{(1-x^{2})(1-k^{2}x^{2})}\,\text{d}x.
\end{eqnarray*}

$H^{(f'')}$ can be diagonalized by the same unitary transform as
the matrix $H^{(f)}$ which has been described in the proof of Theorem~\ref{thm:hank12}
ad (i). After some routine manipulations, similarly as in the proof
of Proposition~\ref{thm:sum-SCpoly-3456}, one can transform $H^{(f'')}$
to a multiplication operator by the function
\begin{eqnarray*}
h^{(f'')}(x) & = & \sum_{n=0}^{\infty}H_{n,0}^{(f'')}\hat{F}_{n}(x)\,=\,\frac{4}{\sqrt{\pi}}\sum_{n=0}^{\infty}\frac{f_{n}(-x)}{(2n)!}\int_{0}^{K}\sn^{2n}(u)\cn^{2}(u)\dn^{2}(u)\,\text{d}u\\
 & = & \frac{4}{\sqrt{\pi}}\left(\cos(\sqrt{x}K)+\sqrt{x}\int_{0}^{K}\sin(\sqrt{x}u)\sn(u)\,\text{d}u\right)\!.
\end{eqnarray*}

This formula in combination with the Fourier series (\ref{eq:four_sn})
makes it possible to evaluate $h^{(f'')}(x)$ at the spectral points
$\lambda_{m}$ of the Jacobi matrix $J^{(f)}$, as given in (\ref{eq:mu_carlitz1}).
The obtained values read
\[
h^{(f'')}(\lambda_{m})=\frac{2\pi^{3/2}}{kK}\,\frac{(2m+1)q^{m+1/2}}{1-q^{2m+1}}\,,\ \,m\geq0,
\]
and these are in fact the eigenvalues of $H^{(f'')}$.

(iii)~~In this case $a=b=1/2$, $c=1$. The choice of $\sigma=2$
corresponds to the Jacobi matrix $J^{(g)}$, see the proof of Theorem~\ref{thm:hank12}
ad (ii). We put $\xi=a=1/2$, $\eta=b+c+2=7/2$, and then $\omega(\xi,\eta,\sigma)=3/2$,
see (\ref{eq:b-x_y_s}). The weight (\ref{eq:weight_gener}) equals
$w_{n}^{(g')}=\sqrt{n+1}$. Furthermore, the square summable solution
(\ref{eq:hn-plus}) of equation (\ref{eq:three-term-eq}) equals
\begin{eqnarray*}
h_{n}^{(g')} & = & \frac{k^{n}\Gamma(n+3/2)}{(n+2)!}\,\HG{n+3/2,-1/2}{n+3}{k^{2}}\\
 & = & \frac{4k^{n}}{\sqrt{\pi}}\int_{0}^{1}x^{2n+2}\sqrt{(1-x^{2})(1-k^{2}x^{2})}\,\text{d}x.
\end{eqnarray*}

$H^{(g')}$ can be diagonalized by the same unitary transform as the
matrix $H^{(g)}$ which has been described in the proof of Theorem~\ref{thm:hank12}
ad (ii). After some routine manipulations, similarly as in the proof
of Proposition~\ref{thm:sum-SCpoly-3456}, one can transform $H^{(g')}$
to a multiplication operator by the function
\begin{eqnarray*}
h^{(g')}(x) & = & \sum_{n=0}^{\infty}H_{n,0}^{(g')}\hat{G}_{n}(x)\,=\,\frac{4}{\sqrt{\pi}}\sum_{n=0}^{\infty}\frac{g_{n}(-x)}{(2n+1)!}\int_{0}^{K}\sn^{2n+2}(u)\cn^{2}(u)\dn^{2}(u)\,\text{d}u\\
 & = & \frac{1}{\sqrt{\pi}}\left(\frac{\sin(\sqrt{x}K)}{\sqrt{x}}-\int_{0}^{K}\cos(\sqrt{x}u)\sn^{2}(u)\,\text{d}u\right)\!.
\end{eqnarray*}

This formula in combination with the Fourier series (\ref{eq:four_sn_squared})
makes it possible to evaluate $h^{(g')}(x)$ at the spectral points
$\lambda_{m}$ of the Jacobi matrix $J^{(g)}$, as given in (\ref{eq:mu_carlitz2}).
The obtained values read
\[
h^{(g')}(\lambda_{m})=\frac{2\pi^{3/2}}{k^{2}K}\,\frac{mq^{m}}{1-q^{2m}}\,,\ \,m\geq1,
\]
and these are in fact the eigenvalues of $H^{(g')}$. \end{proof}

\section*{Acknowledgments}

The authors acknowledge financial support by the Ministry of Education, Youth and Sports of the Czech Republic project no.~CZ.02.1.01/0.0/0.0/16\_019/0000778.


\begin{thebibliography}{10}
\bibitem{AbramowitzStegun} M.~Abramowitz, I.\,A.~Stegun: \emph{Handbook
of Mathematical Functions with Formulas, Graphs, and Mathematical
Tables}, (Dover Publications, New York, 1972).

\bibitem{akhiezer90} N.\,I.~Akhiezer: \emph{Elements of the theory
of elliptic functions,} (American Mathematical Society, Providence,
RI, 1990). Translated from the second Russian edition by H.~H.~McFaden.

\bibitem{carlitz_dm60} L.~ Carlitz: \emph{Some orthogonal polynomials
related to elliptic functions}, Duke Math.~J. \textbf{27} (1960),
443-459.

\bibitem{chihara_78} T.\,S.~Chihara: \emph{An introduction to orthogonal
polynomials}, (Gordon and Breach Science Publishers, New York-London-Paris,
1978).

\bibitem{dlmf} \emph{NIST Digital Library of Mathematical Functions}.
http://dlmf.nist.gov/, Release 1.0.17 of 2017-12-22. F.\,W.\,J.~Olver,
A.\,B.~Olde Daalhuis, D.\,W.~Lozier, B.\,I.~Schneider, R.\,F.~Boisvert,
C.\,W.~Clark, B.\,R.~Miller and B.\,V.~Saunders, eds.

\bibitem{KhwajaDaalhuis} S.~Farid Khwaja, A.\,B.~Olde Daalhuis:
\emph{Uniform asymptotic expansions for hypergeometric functions with
large parameters IV}, Anal. Appl. (Singap.) \textbf{12} (2014), 667-710.

\bibitem{ismail09} M.\,E.\,H.~Ismail: \emph{Classical and quantum
orthogonal polynomials in one variable}, Encyclopedia of Mathematics
and its Applications vol.~98, (Cambridge University Press, Cambridge,
2009). With two chapters by Walter Van Assche, With a foreword by
Richard A. Askey, Reprint of the 2005 original.

\bibitem{ismail-valent_ijm98} M.\,E.\,H.~Ismail, G.~Valent: \emph{On
a family of orthogonal polynomials related to elliptic functions},
Illinois J.~Math. \textbf{42} (1998), 294-312.

\bibitem{ismail-etal_jat01} M.\,E.\,H.~Ismail, G.~Valent, G.\,J.~Yoon:
\emph{Some orthogonal polynomials related to elliptic functions},
J.~Approx. Theory \textbf{112} (2001), 251-278.

\bibitem{kalvoda-stovicek_lma16} T.~Kalvoda, P.~\v{S}\v{t}ov\'i\v{c}ek:
\emph{A family of explicitly diagonalizable weighted Hankel matrices
generalizing the Hilbert matrix}, Linear Multilinear Algebra \textbf{64}
(2016), 870-884.

\bibitem{kiper_mc84} A.~Kiper: \emph{Fourier series coefficients
for powers of the Jacobian elliptic functions}, Math. Comp. \textbf{43}
(1984), 247-259.

\bibitem{lawden_89} D.\,F.~Lawden: \emph{Elliptic Functions and
Applications}, (Springer-Verlag, New York, 1989).

\bibitem{Magnus} W.~Magnus: \emph{On the spectrum of Hilbert's matrix},
Amer. J.~Math. \textbf{72} (1950), 699-704.

\bibitem{olver_97} F.\,W.\,J.~Olver: \emph{Asymptotics and special
functions}, (A.~K.~Peters Ltd., Wellesley, 1997).

\bibitem{Rosenblum} M.~Rosenblum: \emph{On the Hilbert matrix. II},
Proc. Amer. Math. Soc. \textbf{9} (1958), 581-585.

\bibitem{StampachStovicek} F.~\v{S}tampach, P.~\v{S}\v{t}ov\'i\v{c}ek:
\emph{Spectral representation of some weighted Hankel matrices and
orthogonal polynomials from the Askey scheme}, J.~Math. Anal. Appl.
\textbf{472} (2019), 483-509.

\bibitem{StampachStovicek_askey-hankel} F.~\v{S}tampach, P.~\v{S}\v{t}ov\'i\v{c}ek:
\emph{On Hankel matrices commuting with Jacobi matrices from the Askey scheme}, arXiv:1911.07059 [math.CA] (submitted).

\bibitem{KoekoekLeskySwarttouw} R.~Koekoek, P.\,A.~Lesky, R.\,F.~Swarttouw:
\emph{Hypergeometric Orthogonal Polynomials and Their $q$-Analogues},
(Springer, Berlin, 2010).

\bibitem{Yafaev} D.\,R.~Yafaev: \emph{A commutator method for the
diagonalization of Hankel operators}, Funct. Anal. Appl. \textbf{44}
(2010), 295-306.
\end{thebibliography}
\end{document}